\newcommand{\acl}{\textrm{acl}}
\newcommand{\icl}{\textrm{icl}}
\newcommand{\pcl}{\textrm{cl}^p}
\newcommand{\pdim}{\textrm{dim}^p}
\newcommand{\z}{\overline}
\newcommand{\f}{\mathfrak}
\theoremstyle{plain}
\newtheorem{thm}{Theorem}
\newtheorem{lemma}[thm]{Lemma}
\newtheorem{fact}[thm]{Fact}
\numberwithin{thm}{section}
\numberwithin{subcase}{case}
\theoremstyle{definition}
\newtheorem{defn}[thm]{Definition}
\newtheorem{remark}[thm]{Remark}
\newtheorem{notation}[thm]{Notation}
\newtheorem*{theorem*}{Theorem}
\def\dnf{\mathrel{\raise0.2ex\hbox{\ooalign{\hidewidth
$\vert$\hidewidth\cr\raise-0.9ex\hbox{$\smile$}}}}}
\def\forks{\mathrel{\raise0.2ex\hbox{\ooalign{\hidewidth
$\cancel\vert$\hidewidth\cr\raise-0.9ex\hbox{$\smile$}}}}}
\def\dindep{\mathrel{\raise0.2ex\hbox{\ooalign{\hidewidth
$\vert\rlap{${}^d$}$\hidewidth\cr\raise-0.9ex\hbox{$\smile$}}}}}
\def\ndindep{\mathrel{\raise0.2ex\hbox{\ooalign{\hidewidth
$\cancel\vert\rlap{${}^d$}$\hidewidth\cr\raise-0.9ex\hbox{$\smile$}}}}}
\newcommand{\north}{\not\perp}
\newcommand{\finsubset}{\subseteq_\textrm{Fin}}
\newcommand{\Kfin}{K_{\z{\alpha}}}
\newcommand{\Sfin}{S_{\z{\alpha}}}
\begin{document}
\title{Countable Models of the theories of Baldwin-Shi hypergraphs and their regular types}

\author{Danul K. Gunatilleka\thanks{Partially supported
		by NSF grant DMS-1308546.}}
\date{\vspace{-5ex}}

\newcommand{\Addresses}{{
		\bigskip
		\footnotesize
		
		\textsc{Department of Mathematics, University of Maryland at College Park}\par\nopagebreak
		\textit{E-mail address}: \texttt{danulg@math.umd.edu}
		
	}}
	
	\maketitle
	\maketitle
	
	\begin{abstract}
		We continue the study of the theories of Baldwin-Shi hypergraphs from \cite{DG1}. Restricting our attention to when the rank $\delta$ is rational valued, we show that each countable model of the theory of a given Baldwin-Shi hypergraph is isomorphic to a generic structure built from some suitable subclass of the original class of finite structures with the inherited notion of strong substructure. We introduce a notion of dimension for a model and show that there is a an elementary chain $\{\f{M}_{\beta}:\beta<\omega+1\}$ of countable models of the theory of a fixed Baldwin-Shi hypergraph with $\f{M}_{\beta}\preccurlyeq\f{M}_\gamma$ if and only if the dimension of $\f{M}_\beta$ is at most the dimension of $\f{M}_\gamma$ and that each countable model is isomorphic to some $\f{M}_\beta$. We also study the regular types that appear in these theories and show that the dimension of a model is determined by a particular regular type. Further, drawing on the work of Brody and Laskowski, we use these structures to give an example of a pseudofinite, $\omega$-stable theory with a non-locally modular regular type, answering a question of Pillay in \cite{Pil1}. 
	\end{abstract}

	\section{Introduction}\label{sec:intro}
	
	Fix a finite relational language $L$ where each relation symbol has arity at least 2 and let $K_L$ be the class of finite structures where each relation symbols is interpreted reflexively and symmetrically. Fix a function $\z\alpha:L\rightarrow (0,1)\cap\mathbb{Q}$. Define a \textit{rank function} $\delta:K_L\rightarrow \mathbb{Q}$ by $\delta(\f{A})=|A|-\sum_{E\in L}\z\alpha(E)|E^{\f{A}}|$ where $|E^{\f{A}}|$ is the number of subsets of $A$ on which $E$ holds. Let $\Kfin=\{\f{A}\in K_L:\delta(\f{A'})\geq 0 \text{ for all }\f{A'\subseteq A}\}$. Given $\f{A,B}\in\Kfin$, we say that $\f{A\leq B}$ if and only if $\f{A\subseteq B}$ and $\delta(\f{A})\leq \delta(\f{A'})$ for all $\f{A\subseteq A'\subseteq B}$. The class $(\Kfin,\leq)$ forms a Fra\"{i}ss\'{e} class, i.e. $\Kfin$ has amalgamation and joint embedding under $\leq$. In \cite{BdSh}, Baldwin and Shi initiated a systematic study of the \textit{generic structures} constructed from various sub-classes $K^*\subseteq \Kfin$ where $(K^*,\leq)$ forms a Fra\"{i}ss\'{e} class. In particular they obtained the stability of the theory of the $(\Kfin,\leq)$ generic $\f{M}_{\z\alpha}$. We call $\f{M}_{\z\alpha}$ the \textit{Baldwin-Shi hypergraph for $\z\alpha$} and denote its theory by $\Sfin$. We continue their study from \cite{DG1}
	
	We begin in Section \ref{sec:prelim} by introducing preliminary notions that we will be using throughout this paper that closely follows Section $2$ of \cite{DG1}. In Section \ref{sec:ExistThm} we collect some known results that will be using throughout the rest of the paper. We also use the notion of an \textit{essential minimal pair} and state Theorem \ref{lem:InfMinPa}, a key result from \cite{DG1}, which provides the existence of essential minimal pairs. This result will play a significant part in the results that follow.
	
	In Section \ref{sec:OmegaStable} we begin by studying the countable models of $\Sfin$. In Theorem \ref{thm:CountableModelsAsGenerics}, we prove that certain countable models of $\Sfin$ can be obtained as a generic structure by considering a particular subclass of the class of finite substructures used to construct the Baldwin-Shi hypergraph with the naturally inherited notion of strong substructure. We then use this result, along with a notion of dimension for models, to prove Theorem \ref{thm:CountableSpectrum} which establishes that the countable spectrum is $\aleph_0$. In Theorem \ref{thm:CountableModelChain} we sharpen this result and show that for countable $\f{M,N}\models\Sfin$, if the dimension of $\f{M}$ is at most the dimension of $\f{N}$, then $\f{M}$ embeds elementarily into $\f{N}$. Thus the countable models of $\Sfin$ form an elementary chain $\{\f{M}_{\beta}:\beta<\omega+1\}$ with $\f{M}_{\beta}\preccurlyeq\f{M}_\gamma$ for $\beta\leq \gamma$ with each model of $\Sfin$ isomorphic to some $\f{M}_\beta$.
	
	In Section \ref{sec:RegularTypes}  we study the regular types of $\Sfin$. A key result is Theorem \ref{thm:SomeNuggetsAreRegular} which identifies certain types as being regular. In Theorem \ref{thm:DomEquivalence} we establish that a certain class of types are non-orthogonal. We also show that there is a regular type whose  realizations determine the dimension of a model that was introduced in Section \ref{sec:OmegaStable}. We show in Theorem \ref{thm:NuggetsAreNonLocMod}, that these types are in fact not locally modular. We end the section with Theorem \ref{lem:2/cHasPrWrGEQ2}, which establishes that a large class of types are not regular.
	
	In Section \ref{sec:OtherProp} drawing on the work of Brody and Laskowski, we observe that certain of these generic structures have pseudofinite theories. Thus we obtain pseudofinite $\omega$-stable theories with non-locally regular modular types. This answers a question of Pillay in \cite{Pil1} on whether all regular types in a pseudofinite stable theory are locally modular.   
	
	The author wishes to thank Chris Laskowski for all his help and guidance in the preparation of this paper.
	
\section{Preliminaries}\label{sec:prelim}

We work throughout with a finite relational language $L$ where \textit{each relation symbol $E\in L$ is at least binary}. Let $ar:L\rightarrow \{n:n\in\omega \text{ and } n\geq 2\}$ be a function that takes each relation symbol to its arity.

\subsection{Some general notions}\label{subsec:GenNotions}

We begin with some notation. 

\begin{notation}
	We let $K_L$ denote the class of all finite $L$ structures $\f{A}$ (including the empty structure), where each $E\in{L}$ is interpreted symmetrically and irrelexively in $A$: i.e. $\f{A}\in K_L$ if and only if for every $E\in L$, if $\f{A}\models E(\z{a})$, then  $\z{a}$ has no repetitions and $\f{A}\models E(\pi(\z{a}))$ for every permutation $\pi$ of $\{ 0,\ldots,n-1\}$. By $\z{K_L}$ we denote the class of $L$-structures whose finite substructures all lie in $K_L$, i.e. $\z{K_L}=\{\f{M}:\f{M}\text{ an } L-\text{structure and if } \f{A\finsubset M}, \text{ then }\f{A}\in K_L \}$. We write $X\finsubset Z$, $\f{X\finsubset Z}$ to indicate that $|X|$ is finite and we let $\emptyset$ denote the unique $L$-structure with no elements in it.
\end{notation}

We now introduce the class $\Kfin$ as a subclass of $K_L$.

\begin{defn}\label{defn:ClassKfin}
	Fix a function ${\z\alpha}:L\rightarrow (0,1)\cap\mathbb{Q}$. Given $\f{A}\in K_L$, $N_{E}(\f{A})$ will denote the \textit{number of distinct subsets} of $A$ on which $E$ holds positively inside of $\f{A}$. Define a function $\delta$ on $K_L$ by $\delta(\f{A})= \vert A \vert-\sum_{E\in{L}}{{\z\alpha}(E)N_{E}(\f{A})}$ for each $\f{A}\in K_L$. Let $\Kfin=\{\f{A}  \vert \delta(\f{A'})\geq{0}\text{ for all }\f{A'}\subseteq{\f{A}}\}. $
\end{defn}

We adopt the convention $\emptyset\in K_L$ and hence $\emptyset\in \Kfin$ as $\delta(\emptyset)=0$. It is easily observed that $\Kfin$ is closed under substructure. Further the rank function $\delta$ allows us to view both $K_L$ and $\Kfin$ as collections of weighted hypergraphs. We proceed to use the rank function to define a notion of strong substructure $\leq$. Typically the notion of $\leq$ is usually defined on $\Kfin\times \Kfin$. However, we define the concept on the broader class $K_L\times K_L$. This will allow us to make the exposition significantly simpler via Remark \ref{rmk:ConstrVerifSimpl}. 

\begin{defn}\label{defn:defStrong}
	Given $\f{A, B}\in{K_L}$ with $\f{A\subseteq B}$, we say that $\f{A}$ \textit{is strong in} $\f{B}$, denoted by $\f{A}\leq{\f{B}}$, if and only if $\f{A\subseteq{B}}$ and $\f{\delta{(A)}\leq{\delta{(A')}}}$ for all $\f{A\subseteq{A'}\subseteq{B}}$. 
\end{defn}

\begin{remark}\label{rmk:ConstrVerifSimpl}
	Let $\f{A,B}\in K_L$. Further assume that $\f{A\subseteq B}$ with $\f{A}\in \Kfin$. If $\f{A\leq B}$, then $\f{B}\in\Kfin$. (use $(1)$ of Fact \ref{lem:MonoRelRankOvBase}). 
\end{remark}

\begin{defn}\label{defn:PostiveRankInfStructures}
	By $\z\Kfin$ we denote the class of all $L$-structures whose finite substructures are all in $\Kfin$, i.e. $\z{\Kfin}=\{\f{M}:\f{M}\text{ an } L-\text{structure and if } \f{A\finsubset M}, \text{ then }\f{A}\in \Kfin\}$. 
\end{defn}

\begin{remark}\label{fact:ExtSmthFraClass}
	The relation $\leq$ on $K_L \times K_L$ is reflexive, transitive and has the property that given $\f{A,B,C}\in K_L$, if $\f{A\leq{C}}$, $\f{B\subseteq{C}}$ then $\f{A\cap{B}\leq{B}}$ (use $(1)$ of Fact \ref{lem:MonoRelRankOvBase}). The same statement holds true if we replace $K_L$ by $\Kfin$ in the above. Further for any given $\f{A}\in\Kfin$, $\emptyset\leq{\f{A}}$.   
\end{remark}  

The following definition extends the notion of strong substructure to structures in $\z{K_L}$:

\begin{defn}
	Let $\f{X}\in\z{K_L}$. For $\f{A\finsubset{X}}$, $\f{A}$ \textit{is strong in} $\f{X}$, denoted by $\f{A\leq X}$, if $\f{A\leq B}$ for all $\f{A\subseteq B \finsubset Z}$. Given $\f{A}'\in K_L$ an embedding $f:\f{A'\rightarrow{\f{X}}}$ is called a \textit{strong embedding} if $f(\f{A'})$ is strong in $\f{X}$.
\end{defn}

\begin{defn}\label{defn:BasicDel2} 
	Let $n$ be a positive integer. A set $\{\f{B}_i:i<n\}$ of elements of $\Kfin$ is \textit{disjoint over} $\f{A}$ if $\f{A}\subseteq{\f{B}_i}$ for each $i<n$ and $B_i\cap{B_j}=A$ for $i<j<n$. If $\{\f{B}_i:i<n\}$ is disjoint over $\f{A}$, then $\f{D}$ is the \textit{free join} of $\{\f{B}_i:i<n\}$ if the universe $D=\bigcup\{B_i:i<n\}$, $\f{B}_i\subseteq{\f{D}}$ for all $i$ and, there are no additional relations, i.e. $E^\f{D}=\bigcup\{E^{\f{B}_i}:i<n\}$ for all $E\in{L}$. We denote a free join by $\oplus_{i<n}\f{B}_i$ . In the case $n=2$ we will use the notation $\f{B}_0\oplus_{\f{A}}\f{B}_1$ for $\oplus_{i<2}\f{B}_i$. 
\end{defn}

\begin{fact}\label{lem:FullAmalg1}
	If $\f{B, C}\in{\Kfin}$, $\f{A=B\cap{C}}$, and $\f{\f{A\leq{B}}}$, then $\f{B\oplus_{A}C}\in{\Kfin}$ and $\f{C\leq{B\oplus_{A}C}}$. 
\end{fact}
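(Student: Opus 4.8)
The plan is to verify the two assertions separately, relying on the monotonicity of relative rank over a base (part $(1)$ of Fact \ref{lem:MonoRelRankOvBase}) as the main computational tool, together with the additivity of $\delta$ across a free join. First I would record the key arithmetic identity: if $\f{D}=\f{B}\oplus_{\f{A}}\f{C}$, then since the free join introduces no relations beyond those in $\f{B}$ and $\f{C}$, for any substructure $\f{D'}\subseteq\f{D}$ with $\f{B'}=\f{D'}\cap\f{B}$ and $\f{C'}=\f{D'}\cap\f{C}$ we have $\delta(\f{D'})=\delta(\f{B'})+\delta(\f{C'})-\delta(\f{D'}\cap\f{A})$. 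This is the engine behind everything: it reduces rank computations in $\f{D}$ to rank computations in $\f{B}$ and $\f{C}$, where we already have information.

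To show $\f{B}\oplus_{\f{A}}\f{C}\in\Kfin$, I would take an arbitrary $\f{D'}\subseteq\f{D}$ and must show $\delta(\f{D'})\geq 0$. Write $\f{A'}=\f{D'}\cap\f{A}$, $\f{B'}=\f{D'}\cap\f{B}$, $\f{C'}=\f{D'}\cap\f{C}$ as above. Since $\f{A}\leq\f{B}$ and $\f{A'}\subseteq\f{B'}\subseteq\f{B}$, monotonicity of relative rank gives $\delta(\f{B'})-\delta(\f{A'})\geq\delta(\f{A'}\cap\f{A})-\delta(\f{A'})=0$; more carefully, $\f{A'}\leq\f{B'}$ follows from $\f{A}\leq\f{B}$ by the intersection property in Remark \ref{fact:ExtSmthFraClass}, hence $\delta(\f{A'})\leq\delta(\f{B'})$. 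Therefore $\delta(\f{D'})=\delta(\f{B'})+\delta(\f{C'})-\delta(\f{A'})\geq\delta(\f{C'})\geq 0$, the last inequality because $\f{C}\in\Kfin$ and $\f{C'}\subseteq\f{C}$. This gives $\f{D}\in\Kfin$ (and incidentally also verifies $\f{B},\f{C}\in\Kfin$ are genuinely substructures, which is given).

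For the second assertion, $\f{C}\leq\f{B}\oplus_{\f{A}}\f{C}$, I would take any $\f{C}\subseteq\f{D'}\subseteq\f{D}$ and show $\delta(\f{C})\leq\delta(\f{D'})$. With $\f{B'}=\f{D'}\cap\f{B}$ we have $\f{D'}\cap\f{A}=\f{B'}\cap\f{A}$ (since $\f{C}\subseteq\f{D'}$ and $\f{B}\cap\f{C}=\f{A}$), call it $\f{A'}$, and the identity becomes $\delta(\f{D'})=\delta(\f{B'})+\delta(\f{C})-\delta(\f{A'})$. Now $\f{A'}=\f{B'}\cap\f{A}\leq\f{B'}$ because $\f{A}\leq\f{B}$ and $\f{B'}\subseteq\f{B}$ (again the intersection property of Remark \ref{fact:ExtSmthFraClass}), so $\delta(\f{B'})\geq\delta(\f{A'})$, whence $\delta(\f{D'})\geq\delta(\f{C})$, as required.

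I do not anticipate a serious obstacle here; the only point requiring a little care is making sure the additivity identity for $\delta$ over the free join is applied to the correct substructures and that the intersections $\f{D'}\cap\f{A}$, $\f{D'}\cap\f{B}$, $\f{D'}\cap\f{C}$ are taken as induced substructures, so that no relations are double-counted or lost — this is exactly where irreflexivity and symmetry of the relations, and the "no additional relations" clause in Definition \ref{defn:BasicDel2}, are used. Everything else is a direct invocation of Fact \ref{lem:MonoRelRankOvBase}$(1)$ and Remark \ref{fact:ExtSmthFraClass}.
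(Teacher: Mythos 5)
Your argument is correct: the inclusion–exclusion identity $\delta(\f{D'})=\delta(\f{B'})+\delta(\f{C'})-\delta(\f{A'})$ holds for every induced substructure of the free join (since free joins pass to induced substructures), and combining it with $\f{A}\cap\f{B'}\leq\f{B'}$ from Remark \ref{fact:ExtSmthFraClass} yields both conclusions exactly as you say. The paper states this as a Fact without proof (it is standard in the Baldwin--Shi literature), and what you have written is precisely the usual argument, so there is nothing to contrast.
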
We now turn our attention towards constructing the generic structure for $(\Kfin,\leq)$. We use the convention that $\emptyset\in\Kfin$. It is also immediate that $\delta(\emptyset)=0$ and that $\Kfin$ is closed under substructure.

\begin{defn}\label{defn:GenericStr}
	A countable structure $\f{M}\in\z{\Kfin}$ is said to be the generic for $(\Kfin,\leq)$ if \begin{enumerate}
		\item $\f{M}$ is the union of an $\omega$-chain $\f{A}_0\leq\f{A}_1\leq\ldots$ with each $\f{A}_i\in \Kfin$. 
		\item If $\f{A,B}\in\Kfin$ with $\f{A\leq B}$ and $\f{A\leq{M}}$, then there is $\f{{B'}\leq {M}}$ such that $\f{{B}\cong_{A}{B'}}$. 
	\end{enumerate}
\end{defn}

\begin{fact}\label{fact:ExistGenStr}
	$(\Kfin,\leq)$ is a Fra\"{i}ss\'{e} class (i.e. $(\Kfin,\leq)$ satisfies joint embedding and amalgamation with respect to $\leq$) and a generic structure for $(\Kfin,\leq)$ exists and is unique up to isomorphism.
\end{fact}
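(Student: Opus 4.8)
The plan is to verify the two Fra\"{i}ss\'{e} conditions for $(\Kfin,\leq)$ and then run the usual union-of-chain construction for the generic, followed by a back-and-forth argument for uniqueness, everywhere replacing ``embedding'' by ``strong embedding''. The combinatorial preliminaries are immediate: $\Kfin$ has only countably many isomorphism types since $L$ is finite relational, it is nonempty and hereditary (a finite structure with no relations has $\delta(\f{A}')=|A'|\geq 0$ for every substructure $\f{A}'$, so it and all its substructures lie in $\Kfin$), and $\leq$ is reflexive and transitive by Remark~\ref{fact:ExtSmthFraClass}. So the real content is (strong) amalgamation and joint embedding. For amalgamation I would take $\f{A}\leq\f{B}$ and $\f{A}\leq\f{C}$ in $\Kfin$, rename so that $\f{B}\cap\f{C}=\f{A}$, and put $\f{D}=\f{B}\oplus_{\f{A}}\f{C}$: Fact~\ref{lem:FullAmalg1} (applied with $\f{A}\leq\f{B}$) gives $\f{D}\in\Kfin$ and $\f{C}\leq\f{D}$, the same fact with the roles of $\f{B}$ and $\f{C}$ interchanged (using $\f{A}\leq\f{C}$ and symmetry of the free join) gives $\f{B}\leq\f{D}$, and $\f{A}\leq\f{D}$ follows by transitivity from $\f{A}\leq\f{B}\leq\f{D}$. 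Joint embedding is the case $\f{A}=\emptyset$, which is legitimate since $\emptyset\in\Kfin$ and $\emptyset\leq\f{X}$ for all $\f{X}\in\Kfin$ by Remark~\ref{fact:ExtSmthFraClass}. Hence $(\Kfin,\leq)$ is a Fra\"{i}ss\'{e} class.

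For existence I would enumerate all pairs $(\f{A},\f{B})$ with $\f{A}\leq\f{B}$ in $\Kfin$ up to isomorphism and, by a standard bookkeeping argument, build an $\omega$-chain $\f{A}_0\leq\f{A}_1\leq\cdots$ in $\Kfin$ so that for every $i$, every strong embedding $\f{A}\hookrightarrow\f{A}_i$, and every $\f{A}\leq\f{B}$, some later $\f{A}_{k+1}$ contains a strong copy $\f{B}'\cong_{\f{A}}\f{B}$ over that embedding; at each stage only finitely many embeddings need attention, and each step is handled by the amalgamation just established (applied with the current $\f{A}_k$ in place of $\f{B}$). Set $\f{M}=\bigcup_i\f{A}_i$. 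Then $\f{M}\in\z\Kfin$ because any finite substructure sits inside some $\f{A}_i\in\Kfin$ and $\Kfin$ is hereditary, and condition~(1) of Definition~\ref{defn:GenericStr} holds by construction. Moreover each $\f{A}_i\leq\f{M}$: given finite $\f{A}_i\subseteq\f{E}\finsubset\f{M}$, pick $j\geq i$ with $\f{E}\subseteq\f{A}_j$; then $\f{A}_i=\f{A}_i\cap\f{E}\leq\f{E}$ by the intersection property in Remark~\ref{fact:ExtSmthFraClass} applied inside $\f{A}_j$. Finally, if $\f{A}\leq\f{B}$ in $\Kfin$ and $\f{A}\leq\f{M}$, I choose $i$ with $\f{A}\subseteq\f{A}_i$; since $\f{A}\leq\f{M}$ we get $\f{A}\leq\f{A}_i$, so the pair $(\f{A},\f{B})$ is handled at some $k\geq i$, producing $\f{B}'\cong_{\f{A}}\f{B}$ with $\f{B}'\leq\f{A}_{k+1}\leq\f{M}$, hence $\f{B}'\leq\f{M}$ by transitivity. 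This gives condition~(2), so $\f{M}$ is generic.

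Uniqueness is a back-and-forth. First note that in any generic $\f{M}=\bigcup_i\f{A}_i$ every finite subset is contained in a finite strong substructure, namely some $\f{A}_i$ (which is $\leq\f{M}$ exactly as above). Given generics $\f{M},\f{N}$, I would build an increasing chain of finite partial isomorphisms $f_n:\f{A}_n\to\f{A}_n'$ with $\f{A}_n\leq\f{M}$ and $\f{A}_n'\leq\f{N}$, starting from the empty map on $\emptyset\leq\f{M}$, $\emptyset\leq\f{N}$. To absorb a point $m\in\f{M}$: choose finite $\f{B}$ with $\f{A}_n\cup\{m\}\subseteq\f{B}\leq\f{M}$, so that $\f{A}_n\leq\f{B}$; transport $\f{B}$ along $f_n$ to an abstract copy $\f{B}^{\ast}$ with $\f{A}_n'\leq\f{B}^{\ast}$; apply condition~(2) for $\f{N}$ to get $\f{B}'\leq\f{N}$ with $\f{B}'\cong_{\f{A}_n'}\f{B}^{\ast}$; the composite extends $f_n$ to an isomorphism $\f{B}\cong\f{B}'$ with $\f{B}\leq\f{M}$ and $\f{B}'\leq\f{N}$. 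Interleaving this with the symmetric step for points of $\f{N}$, the union $\bigcup_n f_n$ is an isomorphism $\f{M}\cong\f{N}$.

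I do not expect a serious obstacle: Fact~\ref{lem:FullAmalg1} already supplies the strong amalgamation that is the heart of the matter. The only delicate points are organizing the bookkeeping in the existence proof so that \emph{every} strong embedding of each $\f{A}$ into each stage is eventually extended, and keeping track of \emph{strength} (rather than mere embeddability) at every step of the construction and the back-and-forth.
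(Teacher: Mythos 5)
Your proof is correct. The paper states this only as a Fact without proof (it is part of the standard Fra\"{i}ss\'{e}--Hrushovski machinery from Baldwin--Shi), and your argument is exactly the canonical one: amalgamation via the free join and Fact~\ref{lem:FullAmalg1}, joint embedding as the case $\f{A}=\emptyset$, and the usual chain construction and back-and-forth with ``strong'' tracked throughout.
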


This justifies the following definition:

\begin{defn}
	For a fixed $\z\alpha$ we call the generic for $(\Kfin,\leq)$ the \textit{Baldwin-Shi hypergraph for $\z\alpha$}.
\end{defn}

\subsection{Closed sets}\label{subsec:ClosedSets}

In this section we generalize the notion of strong substructure to substructures of arbitrary size by introducing the notion of a closed set. This will provide us with a useful tool for analyzing the various theories of Baldwin-Shi hypergraphs. 

\begin{defn}\label{defn:MinPair}
	Let $\f{A,B}\in{K_L}$. Now $\f{(\f{A,B})}$ is a \textit{minimal pair} if and only if $\f{A\subseteq{B}}$, $\f{A\leq{C}}$ for all $\f{A\subseteq{C}\subset{B}}$ but $\f{A\nleq{B}}$.
\end{defn}

Note that  $\f{(\f{A,B})}$ is a minimal pair if and only if $\f{A\subseteq{B}}$, $\f{\delta{(A)}\leq{\delta{(C)}}}$ for all $\f{A\subseteq{C}\subset{B}}$ but $\f{\delta(\f{B})<\delta{(A)}}$. 

\begin{defn}\label{defn:ClosedSet}
	Let $\f{Z}\in{\z{K_L}}$ and ${X\subseteq{Z}}$. We say ${X}$ \textit{is closed in} $\f{Z}$ if and only if for all ${A\finsubset{X}}$, if ${(\f{A,B})}$ is a minimal pair with ${B\subseteq{Z}}$, then ${B\subseteq{X}}$.      
\end{defn}

\begin{remark}\label{rmk:Closed=StrongForFinite}
	As any $\f{A,B,C}\in K_L$ with $\f{A\leq{C}}$ and $\f{B\subseteq{C}}$ satisfies $\f{A\cap{B}\leq{B}}$ (see Remark \ref{fact:ExtSmthFraClass}) an easy argument yields that given $\f{Z}\in K_L$ and $\f{A\finsubset Z}$, $\f{A\leq Z}$ if and only if $\f{A}$ is closed in $\f{Z}$.  
\end{remark}

It is immediate from the above definition that any $\f{Z}\in{\z{K_L}}$, $Z$ is closed in $\f{Z}$ and that the intersection of a family of closed sets of $\f{Z}$ is again closed. These observations justify the following definition: 

\begin{defn}
	Let $\f{Z}\in\z{K_L}$ and ${X\subseteq{Z}}$. The \textit{intrinsic closure} of ${X}$ in ${Z}$, denoted by ${\icl_{\f{Z}}(X)}$ is the smallest set ${X'}$ such that ${X\subseteq{X'}\subseteq{Z}}$ and ${X'}$ is closed in ${Z}$.       
\end{defn}

\subsection{Some basic properties of the rank function}

We start exploring the rank function $\delta$ in more detail. The following facts are easily obtained by routine computations involving the $\delta$ function.

\begin{defn}\label{defn:relRank}
	Given $\f{Z}\in \z{K_L}$ and ${A, B}\finsubset{Z}$, let ${\delta({B/A})=\delta{(BA)}-\delta{(A)}}$.
\end{defn}

\begin{fact}\label{lem:MonoRelRankOvBase}\label{lem:BasicDel3}\label{fact:rankAddOverBases}\label{rmk:relRank}\label{fact:PositiveDimension}
	Let $\f{Z}\in\z{K_L}$ be non-empty and let ${A,B,B_i,C \finsubset Z}$.
	\begin{enumerate}
		\item Let $A'=A\cap{B}$. Now $\delta({B/A'})\geq \delta({B/A})=\delta(AB/A)$. Further if $B,C$ are disjoint and freely joined over $A$, then $\delta(B/AC)=\delta(B/A)$. 
		
		\item  If $\{{B}_i: i<n\}$ is disjoint over ${A}$ and $Z=\oplus_{i<n}B_i$ is their free join over $A$, then $\delta({Z/A})=\sum_{i<n}\delta({B}_i/A)$. In particular, if ${A}\leq{{B}_i}$ for each $i<n$, then ${A}\leq{\oplus_{i<n}{B}_i}$.
		
		\item $ \delta({B}_1{B}_2\ldots{B}_k/{A})=\delta({B}_1/{A})+\sum_{i=2}^k\delta({B}_i/{AB}_1\ldots{B}_{i-1}) $
		
		%\item  Let $\f{A}\in\Kfin$ with $\delta(\f{A})>0$. Then there is some $a\in {A}$ such that $\delta(\f{B})>0$ for all $\f{B\subseteq A}$ with $b\in B$.
	\end{enumerate}

\end{fact}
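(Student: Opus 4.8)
The plan is to deduce all three items from the behavior of the two counting functions $\vert\cdot\vert$ and $N_E(\cdot)$ underlying $\delta$, the only substantive input being the submodularity of $\delta$.

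For item~(1) I would first record submodularity: for all $\f{A},\f{B}\finsubset\f{Z}$,
\[
\delta(\f{A})+\delta(\f{B})\ \geq\ \delta(\f{AB})+\delta(\f{A\cap B}).
\]
Since $\vert A\vert+\vert B\vert=\vert A\cup B\vert+\vert A\cap B\vert$, it suffices to check $N_E(\f{A})+N_E(\f{B})\leq N_E(\f{AB})+N_E(\f{A\cap B})$ for each $E\in L$; here one uses that, relations being inherited by substructures, $N_E(\f{A})$ is just the number of $S\subseteq A$ with $E^{\f{Z}}(S)$. Sorting the relevant instances $S\subseteq A\cup B$ by whether $S\subseteq A\cap B$, $S\subseteq A$ only, $S\subseteq B$ only, or neither, one sees each is counted at least as often on the right as on the left, the ``crossing'' instances ($S\subseteq A\cup B$ with $S\not\subseteq A$ and $S\not\subseteq B$) contributing to the right only. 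Putting $A'=A\cap B$ and cancelling $\delta(\f{A\cap B})$ turns submodularity into exactly $\delta(B/A')\geq\delta(B/A)$, while $\delta(B/A)=\delta(AB/A)$ is immediate from Definition~\ref{defn:relRank} since $BA$ and $AB$ name the same set. For the freely-joined clause, the point is that a free join (Definition~\ref{defn:BasicDel2}) creates no crossing instances, so $N_E$ and $\vert\cdot\vert$ are additive over it up to inclusion--exclusion along $\f{A}$; this gives $\delta(\f{BC})=\delta(\f{B})+\delta(\f{C})-\delta(\f{A})$, and subtracting $\delta(\f{AC})=\delta(\f{C})$ yields $\delta(B/AC)=\delta(B/A)$.

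For item~(2) I would iterate this additive count: with $\f{Z}=\bigoplus_{i<n}\f{B}_i$ and all pairwise intersections equal to $\f{A}$, one gets $\delta(\f{Z})=\sum_{i<n}\delta(\f{B}_i)-(n-1)\delta(\f{A})$, which rearranges to $\delta(Z/A)=\sum_{i<n}\delta(B_i/A)$. For the ``in particular'', take $\f{A}\subseteq\f{A'}\subseteq\f{Z}$; since $\f{Z}$ is a free join and $\f{A}\subseteq\f{A'}$, the structure $\f{A'}$ is itself the free join of $\{\f{A'}\cap\f{B}_i:i<n\}$ over $\f{A}$ (every relation on a subset of $A'$ already lies inside some $\f{B}_i$, and the pairwise intersections are again $\f{A}$), so the identity just proved gives $\delta(A'/A)=\sum_{i<n}\delta((A'\cap B_i)/A)\geq 0$, each summand equalling $\delta(\f{A'}\cap\f{B}_i)-\delta(\f{A})\geq 0$ because $\f{A}\leq\f{B}_i$. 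Hence $\delta(\f{A})\leq\delta(\f{A'})$, and $\f{A}\leq\f{Z}$. Finally item~(3) is a telescoping identity: writing its $i$-th summand via Definition~\ref{defn:relRank} as $\delta(AB_1\cdots B_i)-\delta(AB_1\cdots B_{i-1})$ (and the first as $\delta(AB_1)-\delta(A)$), the sum collapses to $\delta(AB_1\cdots B_k)-\delta(A)=\delta(B_1\cdots B_k/A)$.

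The only step demanding genuine care is the submodularity count in~(1)---in particular keeping track that each $N_E$ is computed against the ambient $\f{Z}$, and recognizing that free joins are exactly its equality case---after which items~(2) and~(3) are pure bookkeeping.
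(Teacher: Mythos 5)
Your proof is correct, and it is precisely the ``routine computation involving the $\delta$ function'' that the paper invokes without writing out: the paper states this as a Fact with no proof, and the submodularity count on $N_E$, the additivity over free joins, and the telescoping identity are the standard way to verify all three items. No gaps.
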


\section{A collection of known results}\label{sec:ExistThm}

In this section we provide some key definitions and results. We let $c:=lcm\{q_E:E\in L\}$ where $\z\alpha(E)=\frac{p_E}{q_E}$ is in reduced form. We begin with some definitions and some notation.  

\begin{defn}\label{defn:AEAxioms}
	The theory $\Sfin$ is the smallest set of sentences insuring that if $\f{M}\models{\Sfin}$, then
	\begin{enumerate}
		\item $\f{M}\in{\z{\Kfin}}$, i.e. every finite substructure of $\f{M}$ is in $\Kfin$ 
		\item For all $\f{A\leq{B}}$ from $\Kfin$, every (isomorphic) embedding $f:\f{A\rightarrow{M}}$ extends to an embedding $g:\f{B\rightarrow{M}}$
	\end{enumerate}
\end{defn}

%\begin{remark}\label{rmk:SfinIsAEAxiomatizable}
%	We note that $\Sfin$ is a collection of $\forall\exists$-sentences. 
%\end{remark}

\begin{notation} Given  $\f{A}\in{K_L}$ with a fixed enumeration $\z{a}$ of $A$, we write $\Delta_\f{A}(\z{x})$ for the atomic diagram of $\f{A}$. Also for $\f{A},\f{B}\in K_L$ with $\f{A}\subseteq{\f{B}}$ and fixed enumerations $\z{a}, \z{b}$ respectively with $\z{a}$ an initial segment of $\z{b}$;  we let $\Delta_{\f{A,B}}(\z{x}, \z{y})$ the atomic diagram of $\f{B}$ with the universe of $\f{A}$ enumerated first according to the enumeration $\z{a}$.
\end{notation}

\begin{defn}\label{defn:ExtensionFormula}
	Let $\f{A,B}\in{K}$ and assume $\f{A\subseteq{B}}$. Let $\Psi_{\f{A,B}}(\z{x})=\Delta_\f{A}(\z{x})\wedge\exists{\z{y}}\Delta_{\f{(A,B)}}{(\z{x},\z{y})}$. Such formulas are collectively called \textit{extension formulas} (over $\f{A}$). A \textit{chain minimal extension formula} is an extension formula $\Psi_{\f{A,B}}$ where $\f{B}$ us the union of a minimal chain over $\f{A}$. 
\end{defn}

The following appears in many places, including \cite{BdSh}, \cite{VY}, \cite{FW}.

\begin{defn}\label{defn:DimFunc}\label{defn:RelDimFunc}
	Let ${A,B\finsubset\mathbb{M}}$. Then $d({A})=\min \{\delta({A'})  \vert {{A\subseteq{A'}\subseteq{N}}, A' \text{ is finite.}}\}$. Further $d({B/A})=d({AB})-d({B})$. If ${X\subseteq\mathbb{M}}$ is infinite, then $d({A/X})=\min \{d({A/X}_0)  \vert X_{0}\finsubset{X}\}$.  
\end{defn}

\begin{remark}\label{rmk:propOfd}
	For finite $A,B,C$, it is easily observed that $d(A/C)\geq 0$, $d(AB/C)=d(A/BC)+d(B/C)$ and that $d(A)=\delta(\icl(A))$. In more general contexts, if ${X\subseteq\mathbb{M}}$ is infinite, then $d({A/X}):=\inf \{d({A/X}_0)  \vert X_{0}\finsubset{X}\}$. However it is easily observed that we can replace the infimum with minimum in the definition of $d$ as for any $\f{A}\in\Kfin$, $\delta(\f{A})\in\{k/c : k\in\omega \}$.     
\end{remark}

We collect some key results about $\Sfin$ from various sources in the following. 

\begin{thm}\label{thm:QuantElim}\label{Cor:SfinComplete}\label{lem:ClosEqui}\label{thm:StableFin1}\label{lem:CompleteTypesOverSets}\label{lem:WeakElimImg}\label{thm:ForkOvClsdSets1}\label{thm:ForkOvClsdSets}
	\begin{enumerate}
		\item Every $L$-formula is $\Sfin$-equivalent to a boolean combination of chain-minimal extension formulas (see \cite{DG1}).
		\item The theory $\Sfin$ is complete and is the theory of the generic for $(\Kfin,\leq)$. (see \cite{IkKiTs} or \cite{DG1}). 
		\item $\Sfin$ is $\omega$-stable. (see \cite{BdSh}).
		\item Given any $\f{M}\models\Sfin$ and $X\subseteq M$, $X$ is algebraically closed in $M$ if and only if $X$ is intrinsically closed in $M$. (see \cite{BdSh}, \cite{FW} or \cite{DG1}).
		\item The theory $\Sfin$ has weak elimination of imaginaries, i.e. every complete type over an algebraically closed set in the home sort is stationary. (see \cite{BdSh}, \cite{DG1} or \cite{VY})
		\item Let $\f{M}\models\Sfin$ be $\aleph_0$-saturated and let ${A}$ be a finite closed set of $\f{M}$. Suppose that $\pi$ is a consistent partial type over $A$. If any realization $\z{b}$ of $\pi$ in $\f{M}$ has the property that  $\z{b}{A}$ is closed in $\f{M}$, then $\pi$ has a unique completion to a complete type $p$ over $A$ (see \cite{DG1}).
		\item  $\Sfin$ has \textit{finite closures}, i.e. given any $\f{N}\models{\Sfin}$ for any finite $\f{A\subseteq{N}}$, there exists $\f{C}\in\Kfin$ s.t. $\f{A\subseteq{C}\leq{N}}$.
		\item Let $\mathbb{M}$ be a monster model of $\Sfin$. For algebraically closed $X,Y,Z$ with $Z=X\cap{Y}$, $X\dnf_{Z}Y$ if and only if $XY$ is algebraically closed and $X,Y$ are freely joined over $Z$ if and only if for any finite $X_0\subseteq X, Y_0\subseteq Y$, $d({X_0/Z})=d({X_0/ZY_0})$ and $\acl{({X_0Z})}\cap{\acl{({Y_0Z})}} = \acl{(Z)}$ (or equivalently $\icl{({X_0Z})}\cap{\icl{({Y_0Z})}} = \icl{(Z)}$: see \cite{BdSh} or \cite{VY}). 
		\item Let $\mathbb{M}$ be a monster model of $\Sfin$. Then $\mathbb{M}$ is \textit{flat}, i.e. given a finite collection of finite closed sets $\{E_i:i\in I\}$, $\sum_{s\subseteq I}(-1)^{|s|}d(E_s)\leq 0$ where $E_s=\bigcap_{i\in S} E_i$ and $E_{\emptyset}=\bigcup_{i\in I}E_i$ (see Section $7$ of \cite{FW}). 
	\end{enumerate}	
\end{thm}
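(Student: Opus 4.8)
The plan is to derive each clause from the Fra\"{i}ss\'{e}-style analysis of the generic $\f{M}_{\z\alpha}$ together with the elementary properties of $\delta$ in Fact~\ref{lem:MonoRelRankOvBase} and the amalgamation statement Fact~\ref{lem:FullAmalg1}, following \cite{BdSh}, \cite{FW}, \cite{VY}, \cite{IkKiTs} and \cite{DG1}; since these are collected from the literature, the proposal is really a description of the shape of each argument and a pointer to where the details live. I would begin with (7), \emph{finite closures}: given finite $\f{A\subseteq N}$, iterate minimal pairs over $\f{A}$; because $\delta$ is bounded below on $\Kfin$ and takes values in $\frac{1}{c}\mathbb{Z}$, the value $\delta$ strictly decreases at each stage and the process terminates with some $\f{C}$, $\f{A\subseteq C\leq N}$. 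This is the tool that makes the rest run.

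Next I would prove (1) by a back-and-forth between $\aleph_0$-saturated models of $\Sfin$ in which the partial isomorphisms are maps between finite \emph{closed} subsets (using Remark~\ref{rmk:Closed=StrongForFinite} to identify closed with strong on finite sets): the extension axioms of Definition~\ref{defn:AEAxioms}, together with finite closures, let one extend such a map one point at a time. It follows that $\tp(\z a)$ is determined by the isomorphism type of $\icl(\z a)$ viewed as a strong extension of $\z a$, which after unwinding is exactly the statement of which chain-minimal extension formulas $\z a$ satisfies; this gives (1). Completeness and that $\Sfin$ is the theory of the generic (2) then follow, since $\emptyset\leq\f{A}$ for every $\f{A}\in\Kfin$ (Remark~\ref{fact:ExtSmthFraClass}) forces joint embedding, so all models are elementarily equivalent to $\f{M}_{\z\alpha}$. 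For $\omega$-stability (3) one counts: over a countable $A$, a complete type is pinned down by the isomorphism type of $\icl(A\z a)$, and the closure is reached in finitely many steps because $\delta$ is $\frac{1}{c}\mathbb{Z}$-valued and monotone, so there are only countably many types. For (4), if $\z a\notin\icl(X)$ take a minimal pair $(\f{A},\f{B})$ with $\f{A\finsubset X}$, $\f{B\subseteq M}$, $\f{B}\not\subseteq X$, and use free joins over $\f{A}$ (Fact~\ref{lem:FullAmalg1}) to produce unboundedly many conjugates over $X$ of a point of $B\setminus X$, so it is not algebraic; conversely any element of a minimal-pair extension lies in every model containing its base, so $\icl(X)\subseteq\acl(X)$. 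Then (6) is the uniqueness statement that a consistent type over a finite closed $A$ all of whose realizations $\z b$ give $\z bA$ closed has a unique completion, which follows from (1) and (4) since $\tp(\z b/A)$ is determined by $\icl(\z bA)=\z bA$; and (5), weak elimination of imaginaries, is the specialization that types over algebraically closed sets are stationary.

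The real work is (8) and (9), and I expect (8) to be the main obstacle. Here one shows that the ternary relation ``$XY$ is algebraically closed and $X,Y$ are freely joined over $Z=X\cap Y$'' satisfies the axioms characterizing nonforking in a stable theory (invariance, monotonicity, base monotonicity, transitivity, symmetry, local character, and stationarity over algebraically closed sets), whence it must coincide with $\dnf$. The delicate points are transitivity and the existence of free extensions: one must verify that free joins can be iterated while staying inside $\Kfin$ and that the union stays algebraically closed, i.e.\ that no new minimal pair is created across the join — and this is exactly where the submodularity inequalities $\delta(B/A')\geq\delta(B/A)$ and the additivity of $\delta$ over free joins in Fact~\ref{lem:MonoRelRankOvBase}, together with Fact~\ref{lem:FullAmalg1}, are used. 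The reformulation in terms of $d(X_0/Z)=d(X_0/ZY_0)$ and $\icl(X_0Z)\cap\icl(Y_0Z)=\icl(Z)$ is then a translation via $d(\,\cdot\,)=\delta(\icl(\,\cdot\,))$ and $d(AB/C)=d(A/BC)+d(B/C)$ from Remark~\ref{rmk:propOfd}. Finally (9), flatness, is the inclusion-exclusion inequality $\sum_{s\subseteq I}(-1)^{|s|}d(E_s)\leq 0$; I would reduce it by induction on $|I|$ and the modularity of $d$ to the case $|I|=2$, namely $d(E_0\cup E_1)+d(E_0\cap E_1)\leq d(E_0)+d(E_1)$, which follows from submodularity of $\delta$ on closed sets, and otherwise simply cite Section~7 of \cite{FW}, since the only subtlety there is the inductive bookkeeping, which is carried out in that reference.
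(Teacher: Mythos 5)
The paper itself gives no proof of this theorem: it is explicitly a list of results imported from \cite{BdSh}, \cite{FW}, \cite{VY}, \cite{IkKiTs} and \cite{DG1}, so your proposal can only be measured against those sources. Your outlines for items (1)--(8) do follow the standard routes taken there: finite closures from the fact that $\delta$ is $\tfrac{1}{c}\mathbb{Z}$-valued and nonnegative on $\Kfin$, back-and-forth over finite closed sets for the reduction to chain-minimal extension formulas and for completeness, type-counting via intrinsic closures for $\omega$-stability, and verification of the independence axioms for the forking characterization in (8). One correction to (4): to get $\icl(X)\subseteq\acl(X)$ the relevant point is not that a minimal-pair extension ``lies in every model containing its base'' (that is a definability-flavoured claim, not algebraicity), but that over a fixed finite base $\f{A}$ there can be only finitely many essentially distinct copies of $\f{B}$ with $(\f{A},\f{B})$ a minimal pair, since each new copy drops the $\delta$-value of the union by at least $1/c$ while $\delta\geq 0$ on $\Kfin$; this bounds the number of conjugates and yields algebraicity.

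The genuine gap is in (9). Flatness does not reduce by induction on $|I|$ to the submodularity inequality $d(E_0\cup E_1)+d(E_0\cap E_1)\leq d(E_0)+d(E_1)$; flatness is strictly stronger than submodularity. Every matroid rank function is submodular, yet for the seven lines of the Fano plane one computes $\sum_{s}(-1)^{|s|}d(E_s)=3-14+21-7=3>0$, so no formal induction from the two-set case can succeed. The actual argument in Section 7 of \cite{FW} treats the two summands of $\delta$ separately: the point counts $|E_s|$ satisfy inclusion-exclusion with equality, while the hyperedge counts $N_{E}(\cdot)$ satisfy it with the favourable inequality, because the alternating sum over nonempty $s$ counts exactly the hyperedges lying inside some single $E_i$, whereas $E_\emptyset=\bigcup_i E_i$ may contain additional hyperedges spanning several of the $E_i$; one then passes from $\delta$ to $d$ by choosing finite closed witnesses. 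Citing \cite{FW} for (9), as the paper does, is fine, but the proposed reduction should be dropped rather than offered as an alternative proof.
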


\subsection{Essential Minimal Pairs}

We now define \textit{essential minimal pairs}. We use them here to study various properties of forking.

\begin{defn}\label{defn:essentialMinPa}
	Let $\f{B}\in\Kfin$ with $\delta(\f{B})>0$. We call $\f{D}\in \Kfin$ with $\f{B\subseteq D}$ an \textit{essential minimal pair} if $(\f{B,D})$ is a minimal pair and for any $\f{D'\subsetneq D}$, $\delta(\f{D'/D'\cap{B}})\geq 0$.  
\end{defn}

The following, in more general form, appears in \cite{DG1} as Theorem 3.32. It will form the backbone of many of the results to follow,

\begin{thm}\label{lem:InfMinPa}\label{thm:OmitPrelim2}%\label{lem:ForkingWitnessesWhenNotGraphLike}
	Let $\f{A}\in\Kfin$ with $\delta(\f{A})=k/c>0$. There are $\f{D}\in\Kfin$ such that $(\f{A,D})$ is an essential minimal pair and satisfies $\delta(\f{D/A})=-1/c$. 
\end{thm}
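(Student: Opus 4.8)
The goal is, starting from $\f{A}\in\Kfin$ with $\delta(\f{A})=k/c>0$, to build an essential minimal pair $(\f{A,D})$ with $\delta(\f{D/A})=-1/c$. The plan is to construct $\f{D}$ by hand: adjoin a single new vertex $v$ to $\f{A}$, and carefully choose which relations to impose on tuples through $v$ so that exactly enough ``weight'' is added to drop $\delta$ by $1/c$ while keeping every proper intermediate structure of non-negative relative rank. First I would recall that $\delta(\f{D/A})=1-\sum_{E\in L}\z\alpha(E)(N_E(\f{D})-N_E(\f{A}))$ when $|D|=|A|+1$, so I want the total $\z\alpha$-weight of the newly-added relational edges through $v$ to equal $1+1/c$. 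Since each $\z\alpha(E)=p_E/q_E$ and $c=\mathrm{lcm}\{q_E\}$, every $\z\alpha(E)$ is a multiple of $1/c$, and more to the point we can realize $1+1/c$ as a sum of such weights provided $\f{A}$ is large enough to host the required edges; the positivity hypothesis $\delta(\f{A})>0$ (i.e. $k\ge 1$) is what guarantees there is ``room'' in $\f{A}$, and if $\f{A}$ is too small one first replaces it by a strong extension (using finite closures / free joins, Fact \ref{lem:FullAmalg1}) — but the cleanest route is to do the whole construction at once, possibly enlarging $A$ by free vertices so that enough independent edges are available.

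The key steps, in order: (1) Fix a relation symbol $E_0$ of minimal arity $r$ and set up the combinatorics of how many $E_0$-edges (and possibly a few edges of other symbols) through $v$ are needed so their weights sum to exactly $1+1/c$; here I would use a greedy/number-theoretic argument — write $1+1/c = \sum_E m_E \z\alpha(E)$ with $m_E$ nonnegative integers, which is solvable because $\z\alpha(E_0)<1$ forces $m_{E_0}\ge 2$ and one can absorb the residue using that all denominators divide $c$. (2) Realize these $m_E$ edges on $v$ together with distinct vertices of $A$ (enlarging $A$ freely if it lacks enough vertices, which does not affect strongness by Fact \ref{lem:MonoRelRankOvBase}(1)), all edges freely placed so that no unintended sub-relations are created; call the result $\f{D}$. (3) Check $\delta(\f{D/A})=-1/c$ directly from the weight bookkeeping. (4) Check $(\f{A,D})$ is a minimal pair: for any $\f{A\subseteq C\subsetneq D}$ we have $v\notin C$, so $C$ omits at least one edge through $v$, hence $N_E$ counted inside $C$ is that of $\f{A}$ on the relevant symbols, giving $\delta(\f{C})=\delta(\f{A})$; thus $\delta(\f{C})=\delta(\f{A})$ and in particular $\delta(\f{C})\ge\delta(\f{A})$, while $\delta(\f{D})<\delta(\f{A})$. (Caveat: if $A$ was enlarged by free vertices, ``$\f{A}$'' in the statement should be taken as the enlarged base, or one argues the added free vertices can be pushed into $C$ harmlessly; I would phrase the construction so the base is exactly the given $\f{A}$ by first noting $\delta(\f{A})>0$ lets us assume $|A|\ge r-1$, which is all that's needed for $E_0$-edges.) (5) Check the essential-pair condition: for $\f{D'\subsetneq D}$ we need $\delta(\f{D'/D'\cap A})\ge 0$. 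If $v\notin D'$ then $D'\subseteq A$ and the relative rank is $0$. If $v\in D'$, then $D'\cap A$ is a proper subset of $A$ and $D'=(D'\cap A)\cup\{v\}$ carries only those edges of $\f{D}$ all of whose non-$v$ vertices lie in $D'$; since $D'\ne D$ it misses at least one vertex of $A$ hence at least one edge, so the weight added over $D'\cap A$ is at most $1+1/c$ minus $\min_E\z\alpha(E)=\z\alpha(E_0)$... — and here I need $\z\alpha(E_0)\ge 1/c$, which holds since $\z\alpha(E_0)=p_{E_0}/q_{E_0}\ge 1/q_{E_0}\ge 1/c$ — giving $\delta(\f{D'/D'\cap A})\ge 1-(1+1/c-1/c)=0$. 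This last estimate is exactly where the choice of $\delta(\f{D/A})=-1/c$ (the smallest possible negative value) makes the essentiality work.

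The main obstacle I expect is step (1)/(2) together with getting the essentiality estimate in step (5) to close simultaneously: one must choose the multiset of edges through $v$ so that (a) their total weight is precisely $1+1/c$, (b) removing any one edge drops the total by at least $1/c$ (automatic) but also (c) the edges are arranged over distinct enough vertices of $A$ that every proper $D'$ genuinely loses weight — e.g. one must avoid a configuration where two different edges only involve vertices inside some small $D'\cap A$, which could let $\delta(\f{D'/D'\cap A})$ be too negative. The fix is to spread the edges so that each vertex of $A$ used lies in few edges, or better, to use edges of a single symbol $E_0$ on pairwise-almost-disjoint vertex sets; then any proper $D'$ containing $v$ drops at least one full $\z\alpha(E_0)$-worth of weight and the bound in (5) is clean. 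Verifying this spreading is possible — again using that we may freely enlarge the vertex set of the base before declaring it to be $\f{A}$, or that $\delta(\f{A})>0$ already forces $\f{A}$ to have enough vertices relative to its edges — is the real content; everything else is the routine $\delta$-arithmetic of Fact \ref{lem:MonoRelRankOvBase} and Definition \ref{defn:relRank}. I would also double-check the edge cases where $L$ has a single binary relation (so $r=2$, $c=q_E$) to make sure the construction degenerates correctly to the familiar graph case.
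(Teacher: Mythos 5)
First, a framing remark: the paper does not prove this statement at all --- it is imported verbatim from \cite{DG1}, where it appears (in more general form) as Theorem 3.32 --- so there is no in-paper argument to compare yours against. Judged on its own terms, your plan has a genuine gap, and it sits at the very first step rather than at the places you flag as ``the real content.''

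The core of your construction is a single new vertex $v$ carrying relational edges of total $\z\alpha$-weight exactly $1+1/c$, and your step (1) asserts that $1+1/c=\sum_{E}m_E\,\z\alpha(E)$ is always solvable in nonnegative integers $m_E$. It is not. Take $L=\{E\}$ with $\z\alpha(E)=4/5$, so $c=5$: the achievable total weights through $v$ are the multiples $4m/5$, and $1+1/5=6/5$ is not among them ($4m=6$ has no integer solution). In general solvability requires $\gcd_E\bigl(p_E\,c/q_E\bigr)$ to divide $c+1$, which fails for most choices of $\z\alpha$; your claim that the residue ``can be absorbed because all denominators divide $c$'' is simply false. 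Consequently a one-point extension can never realize $\delta(\f{D/A})=-1/c$ in such cases, and one is forced to add several new vertices (e.g.\ three new points and four $E$-edges when $\z\alpha(E)=4/5$). That is not a cosmetic repair: once $D\setminus A$ has more than one point, your step (4) (every proper intermediate $\f{C}$ has $\delta(\f{C})=\delta(\f{A})$ because $v\notin C$) and your step (5) (the essentiality estimate, which explicitly writes $D'=(D'\cap A)\cup\{v\}$) both collapse and must be replaced by an analysis of \emph{all} proper subsets of a multi-vertex gadget --- which is precisely the nontrivial content of Theorem 3.32 of \cite{DG1}. Your fallback of ``enlarging $A$ by free vertices'' does not rescue the arithmetic either: each free point added raises the relevant relative rank by $1$, so the target weight becomes $1+t+1/c$ and the same divisibility obstruction reappears. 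The peripheral observations you make (that $\z\alpha(E)\ge 1/c$, that essentiality together with $\f{A}\in\Kfin$ forces $\f{D}\in\Kfin$, and that $\delta(\f{D})=k/c-1/c\ge 0$) are correct, but they are attached to a construction that does not exist in general.
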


We immediately obtain the following useful lemma.

\begin{lemma}\label{lem:FinStrsInRatCase}%
	Let $k\in\omega$. Given any $\f{B}\in\Kfin$, there is some $\f{D}\in\Kfin$ such that $\f{D}\supseteq \f{B}$, $\delta(\f{D})=k/c$ and for any $\f{A\leq B}$ with $\delta(\f{A})\leq k/c$, $\f{A\leq D}$.  \end{lemma}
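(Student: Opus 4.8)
The plan is to build $\f{D}$ from $\f{B}$ by repeatedly attaching essential minimal pairs supplied by Theorem \ref{lem:InfMinPa}, each step lowering $\delta$ by exactly $1/c$, until the rank hits the target value $k/c$. First I would reduce to the case $\delta(\f{B}) \geq k/c$: if $\delta(\f{B}) < k/c$ this direction is vacuous in the sense that no $\f{A \leq B}$ with $\delta(\f{A}) \leq k/c$ could cause trouble — but more carefully, if $\delta(\f{B}) < k/c$ we instead need to \emph{raise} the rank, which we can do by taking a free join of $\f{B}$ with a single new point (or a suitable $0$-dimensional structure) over $\emptyset$; by Fact \ref{lem:FullAmalg1} and part (2) of Fact \ref{lem:MonoRelRankOvBase} this keeps $\f{B}$ strong in the result and adds $1$ (or $1/c$, iterating with partial structures if needed) to $\delta$, and it does not destroy strongness of any $\f{A \leq B}$ since $\f{B}$ remains strong. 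So without loss of generality $\delta(\f{B}) = \ell/c$ with $\ell \geq k$, and I induct on $\ell - k$.

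For the inductive step, suppose $\delta(\f{B}) = \ell/c > k/c$. Apply Theorem \ref{lem:InfMinPa} to $\f{B}$ itself to get an essential minimal pair $(\f{B}, \f{D}_1)$ with $\delta(\f{D}_1/\f{B}) = -1/c$, so $\delta(\f{D}_1) = (\ell-1)/c$. The key point to verify is that strongness is inherited: if $\f{A \leq B}$ with $\delta(\f{A}) \leq k/c$, then $\f{A \leq D}_1$. Here is where I would use the \emph{essential} part of the minimal pair. For any $\f{A} \subseteq \f{A}' \subseteq \f{D}_1$, I need $\delta(\f{A}') \geq \delta(\f{A})$. Write $\f{A}' = \f{A}'' \cup \f{E}$ where $\f{A}'' = \f{A}' \cap \f{B}$ and $\f{E} = \f{A}' \setminus \f{B}$; using part (3) of Fact \ref{lem:MonoRelRankOvBase} to split $\delta(\f{A}') $ as $\delta(\f{A}'') + \delta(\f{A}'/\f{A}'')$ and $\delta(\f{A}'') \geq \delta(\f{A})$ because $\f{A \leq B}$, it suffices to show $\delta(\f{A}'/\f{A}'') \geq 0$. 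If $\f{A}' = \f{D}_1$ this might fail, but then $\delta(\f{A}') = \delta(\f{D}_1) = (\ell-1)/c \geq k/c \geq \delta(\f{A})$ directly since $\ell > k$; and if $\f{A}' \subsetneq \f{D}_1$ then the essential-minimal-pair condition gives exactly $\delta(\f{A}'/\f{A}' \cap \f{B}) \geq 0$, which is $\delta(\f{A}'/\f{A}'') \geq 0$. So $\f{A \leq D}_1$, and also $\delta(\f{A}) \leq k/c \leq (\ell-1)/c = \delta(\f{D}_1)$ keeps the induction hypothesis applicable. Now apply the inductive hypothesis to $\f{D}_1$ (which has rank $(\ell - 1)/c$) to obtain $\f{D} \supseteq \f{D}_1$ with $\delta(\f{D}) = k/c$ such that every $\f{A}' \leq \f{D}_1$ with $\delta(\f{A}') \leq k/c$ satisfies $\f{A}' \leq \f{D}$; applying this with $\f{A}' = \f{A}$ and using transitivity of $\leq$ gives $\f{A \leq D}$, and $\f{B} \subseteq \f{D}_1 \subseteq \f{D}$.

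The main obstacle I anticipate is precisely the bookkeeping in the strongness-inheritance step: handling the boundary case $\f{A}' = \f{D}_1$ (where the essential condition does not apply and one must fall back on the rank inequality $\delta(\f{D}_1) \geq k/c$), and making sure the decomposition $\delta(\f{A}') = \delta(\f{A}'') + \delta(\f{A}'/\f{A}'')$ together with $\delta(\f{A}'/\f{A}'') \geq \delta(\f{A}'/\f{A}' \cap \f{B})$ is deployed correctly — since $\f{A}'' = \f{A}' \cap \f{B}$ the two are literally equal, so this is fine, but one should double-check that $\f{A}'' \supseteq \f{A}$ so that $\delta(\f{A}'') \geq \delta(\f{A})$ follows from $\f{A \leq B}$. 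A secondary, more cosmetic issue is the $\delta(\f{B}) < k/c$ case: the cleanest fix is to first enlarge $\f{B}$ to some $\f{B}^+$ with $\f{B \leq B}^+$ and $\delta(\f{B}^+) = k/c$ by freely adjoining new isolated points (each adds $1$ to $\delta$, and to reach an exact value of the form $k/c$ one adjoins points one at a time until $\delta \geq k/c$, then runs the lowering argument above to come back down to exactly $k/c$ — so in fact the two cases merge into: adjoin points until $\delta(\f{B}^+) \geq k/c$, then apply the induction). I would present it in that unified form to avoid a separate case split. Throughout, $\f{D} \in \Kfin$ is automatic since each extension is by an essential minimal pair or a free join with $\f{B}$ (resp.\ the current structure) strong, so Remark \ref{rmk:ConstrVerifSimpl} and Fact \ref{lem:FullAmalg1} keep us inside $\Kfin$.
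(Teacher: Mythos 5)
Your proposal is correct and follows essentially the same route as the paper: free-join isolated points onto $\f{B}$ to push $\delta$ above $k/c$, then descend in steps of $1/c$ via essential minimal pairs from Theorem \ref{lem:InfMinPa}, verifying at each step that any $\f{A\leq B}$ with $\delta(\f{A})\leq k/c$ stays strong by splitting into the boundary case $\f{F}=\f{D}_{i+1}$ (handled by the rank bound) and the proper-subset case (handled by the essential-minimal-pair condition plus $\delta(\f{F}\cap\f{D}_i/\f{A})\geq 0$). The only difference is presentational: the paper adjoins $k+1$ points at once and runs an explicit finite induction on the chain $\f{D}_0\subseteq\cdots\subseteq\f{D}_l$, while you phrase the descent as a recursion on $\ell-k$.
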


\begin{proof}
	Given $\f{B}$ take $\f{D}_0$ to be the free join of $\f{B}$ with a structure with $k+1$ many points with no relations among them over $\emptyset$. Note that $\f{B\leq D}_0$. Let $l=c\delta(\f{D}_0)-k$. Consider a sequence $\f{D}_0\subseteq\ldots \subseteq\f{D}_l$ where each $(\f{D}_i,\f{D}_{i+1})$ is an essential minimal pair with $\delta(\f{D}_{i+1}/\f{D}_1)=-1/c$. We claim that $\f{D}=\f{D}_l$ is as required. Fix any $\f{A\leq B}$ with $\delta(\f{A})\leq k/c$. We show by induction on $i<l$ that if $\f{A\leq D}_{i}$, then $\f{A\leq D}_{i+1}$. Clearly $\f{A\leq D}_0$ as $\f{A\leq B\leq D}_0$. Fix $i<l$ and consider any $\f{F}$ such that $\f{A\subseteq F\subseteq D}_{i+1}$. If $\f{F=D}_{i+1}$ then $\delta(\f{F})\geq k/c\geq \delta(\f{A})$ and so $\delta(\f{F/A})\geq 0$. On the other hand, if $\f{F\neq D}_{i+1}$, then, $\delta(\f{F}/{\f{D}_{i+1}\cap\f{F}})$ since $(\f{D}_i,\f{D}_{i+1})$ is an essential minimal pair and $\delta(\f{D}_{i}\cap\f{F}/\f{A})\geq 0$ as $\f{A\leq D}_i$. Thus $\delta(\f{F/A})=\delta(\f{F}/\f{D}_i\cap\f{F})+\delta(\f{F}\cap\f{D}_i/\f{A})\geq 0$ as required. 
\end{proof}

\section{Countable models of $\Sfin$}\label{sec:OmegaStable}

Our goal in this section is to study the countable models of $\Sfin$. We begin by defining a notion of dimension for (countable) models. We then show that this notion of dimension is able to categorize countable models up to both isomorphism and elementary embeddability. Recall that $c$ is the least common multiple of the denominators of the $\z\alpha_E$ (in reduced form).

\begin{defn}\label{defn:DimofModel}
	Let $\f{M}\models \Sfin$. Let $\f{A\leq M}$. We let $\dim(\f{M/A})= \max\{\delta(\f{B/A}): \f{A\leq{B}\leq{M}}\}$. If there is no maximum, i.e. given any $z>0$, there will be some $\f{B\leq{M}}$ with $\delta(\f{B/A})>z$, we let $\dim(\f{M/A})=\infty$. We write $\dim(\f{M})$ for $\dim(\f{M}/\emptyset)$. 
\end{defn}

\begin{defn}\label{defn:FixedDimAmalgClass}
	Fix an integer $k\geq 0$ and let $K_{k/c}=\{\f{A}:\f{A}\in{\Kfin} \text{ and } \delta(\f{A})=k/c\}$. Let $(K_{k/c},\leq)$ be such that $\leq$ is \textit{inherited by} $\Kfin$ i.e. $\f{A\leq B}$ for $\f{A,B}\in K_{k/c}$ if and only if for all $\f{A\subseteq B'\subseteq B}$ with $\f{B'}\in\Kfin$, $\f{A\leq B'}$   
\end{defn}

We begin with the following technical lemma:

\begin{lemma}\label{lem:DimWitness}
	Let $\f{A,B,C,D}\in\Kfin$ with $\f{A\leq B,C}$; $\delta(\f{C/A})\geq\delta(\f{B/A})$ and $\f{D}=\f{B\oplus C}$ the free join of $\f{B,C}$ over $\f{A}$. We can construct $\f{H}\in\Kfin$ such that $\f{A,B,C}\leq \f{H}$, $\f{D\subseteq H}$ and $\delta(\f{H/C})=0$. Further if $\delta(\f{B/A})=\delta(\f{C/A})$, the $\f{H}$ that was constructed has the property $\delta(\f{H/B})=0$.   
\end{lemma}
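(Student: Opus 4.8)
The plan is to build $\f{H}$ by starting from $\f{D} = \f{B}\oplus_{\f{A}}\f{C}$ and then repeatedly amalgamating in structures that ``absorb'' the excess dimension of $\f{B}$ relative to $\f{C}$, until what sits over $\f{C}$ has relative rank exactly $0$. By Fact \ref{lem:FullAmalg1} applied to $\f{A\leq B}$ (so $\f{D}=\f{B}\oplus_{\f{A}}\f{C}\in\Kfin$ with $\f{C\leq D}$), and symmetrically $\f{B\leq D}$; also $\f{A\leq D}$ since $\f{A\leq B\leq D}$ (transitivity, Remark \ref{fact:ExtSmthFraClass}). By part (1) of Fact \ref{lem:MonoRelRankOvBase}, $\delta(\f{D/C})=\delta(\f{B/A})$ because $\f{B}$ and $\f{C}$ are freely joined over $\f{A}$ (and $B\cap C=A$); similarly $\delta(\f{D/B})=\delta(\f{C/A})$. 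Write $\delta(\f{B/A})=m/c$ and $\delta(\f{C/A})=n/c$ with $n\geq m\geq 0$ (both nonnegative since $\f{A\leq B,C}$, by positivity of relative rank, Fact \ref{fact:PositiveDimension}). So we need to reduce the rank over $\f{C}$ from $m/c$ down to $0$, in $m$ steps of size $-1/c$ each.

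The key step is to apply Theorem \ref{lem:InfMinPa} (existence of essential minimal pairs) iteratively. Since $\delta(\f{D})>0$ as long as the structure is nonempty with positive rank — note $\delta(\f{D})=\delta(\f{C})+\delta(\f{B/A})\geq \delta(\f{C})$, and we may as well arrange $\delta(\f{C})>0$ at the outset, or handle the degenerate case separately — we can find an essential minimal pair $(\f{D},\f{D}_1)$ with $\delta(\f{D}_1/\f{D})=-1/c$; iterate to get a chain $\f{D}=\f{D}_0\subseteq \f{D}_1\subseteq\cdots\subseteq\f{D}_m=\f{H}$ with each $(\f{D}_i,\f{D}_{i+1})$ an essential minimal pair of rank drop $-1/c$. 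Then $\delta(\f{H/C})=\delta(\f{D/C})+\sum_{i<m}\delta(\f{D}_{i+1}/\f{D}_i)=m/c - m/c = 0$, using part (3) of Fact \ref{lem:MonoRelRankOvBase} together with the fact that the $\f{D}_i$-increments, being essential minimal pairs attached on top, do not interact with $\f{C}$ in a way that changes the increment (this needs a small argument: the increment $\delta(\f{D}_{i+1}/\f{D}_i)$ equals $\delta(\f{D}_{i+1}/\f{C}\f{D}_i)$ is automatic since $\f{C}\subseteq\f{D}\subseteq\f{D}_i$). The crucial verification is that $\f{C}\leq \f{H}$ — this is where I expect the main obstacle. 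I would show by induction on $i$ that $\f{C}\leq\f{D}_i$: the base case $\f{C}\leq\f{D}_0=\f{D}$ is Fact \ref{lem:FullAmalg1}, and for the inductive step I would mimic exactly the computation in the proof of Lemma \ref{lem:FinStrsInRatCase}: given $\f{C}\subseteq\f{F}\subseteq\f{D}_{i+1}$, either $\f{F}=\f{D}_{i+1}$, in which case I need $\delta(\f{D}_{i+1})\geq\delta(\f{C})$, which holds because $\delta(\f{D}_{i+1}/\f{C})=\delta(\f{D/C})+\sum_{j\le i}\delta(\f{D}_{j+1}/\f{D}_j)\geq m/c - (i+1)/c \geq 0$ as long as $i+1\leq m$; or $\f{F}\subsetneq\f{D}_{i+1}$, in which case $\delta(\f{F}/\f{F}\cap\f{D}_i)\geq 0$ by the ``essential'' clause of Definition \ref{defn:essentialMinPa}, and $\delta(\f{F}\cap\f{D}_i/\f{C})\geq 0$ by the inductive hypothesis $\f{C}\leq\f{D}_i$, so $\delta(\f{F/C})\geq 0$ by additivity.

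For the remaining claims: $\f{A\leq H}$ and $\f{B\leq H}$ follow once we know $\f{A\leq D}$, $\f{B\leq D}$ (established above) and that $\f{D}\leq\f{D}_i$ for all $i$ — which is part of the same inductive argument (indeed $\f{D}\leq\f{D}_{i+1}$: for $\f{D}\subseteq\f{F}\subseteq\f{D}_{i+1}$ with $\f{F}\ne\f{D}_{i+1}$, $\delta(\f{F}/\f{F}\cap\f{D}_i)\geq0$ by essentiality and $\delta(\f{F}\cap\f{D}_i/\f{D})\geq0$ by induction, while $\f{F}=\f{D}_{i+1}$ needs $\delta(\f{D}_{i+1}/\f{D})\geq0$, which fails in general!). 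So in fact $\f{D}\leq\f{H}$ is \emph{false} when $m>0$; instead I should only claim $\f{A},\f{B},\f{C}\leq\f{H}$ directly. For $\f{A\leq H}$ and $\f{B\leq H}$ I run the same induction as for $\f{C}$ but with $\f{A}$ (resp.\ $\f{B}$) in place of $\f{C}$: the only difference is checking the ``$\f{F}=\f{D}_{i+1}$'' case, i.e. $\delta(\f{D}_{i+1})\geq\delta(\f{A})$ (resp.\ $\geq\delta(\f{B})$), which holds because $\delta(\f{D}_{i+1}/\f{A})=\delta(\f{D/A})-(i+1)/c = (m+n)/c - (i+1)/c\geq 0$ for $i+1\le m\le m+n$ (resp.\ $\delta(\f{D}_{i+1}/\f{B})=\delta(\f{D/B})-(i+1)/c=n/c-(i+1)/c$, which is $\geq0$ precisely when $i+1\leq n$; since $m\leq n$ and we only go up to $i+1=m\leq n$, this is fine). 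Finally, the ``Further'' clause: if $\delta(\f{B/A})=\delta(\f{C/A})$, i.e. $m=n$, then $\delta(\f{H/B})=\delta(\f{D/B})+\sum_{i<m}\delta(\f{D}_{i+1}/\f{D}_i)=n/c-m/c=0$, as desired. I would close by noting the degenerate case $m=0$ (where already $\delta(\f{D/C})=0$ and we take $\f{H}=\f{D}$), and the case where $\delta$ of the ambient structure is $0$, which forces everything to be empty or requires taking a harmless free join with a single new point first to guarantee positive rank before invoking Theorem \ref{lem:InfMinPa} — exactly the trick used in the proof of Lemma \ref{lem:FinStrsInRatCase}.
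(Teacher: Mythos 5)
Your proof is correct and follows essentially the same route as the paper: the paper's proof is a one-line application of Lemma \ref{lem:FinStrsInRatCase} to $\f{D}=\f{B}\oplus_{\f{A}}\f{C}$ (with target rank $\delta(\f{C})$), and what you have written is precisely an inlined version of that lemma's iterated essential-minimal-pair construction together with its induction showing the relevant strong substructures remain strong. Your observation that $\f{D}\nleq\f{H}$ in general (only $\f{D}\subseteq\f{H}$) and your rank bookkeeping for $\f{A},\f{B},\f{C}$ are all consistent with what the cited lemma delivers.
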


\begin{proof}
	This follows from an easy application of Lemma \ref{lem:FinStrsInRatCase} on $\f{D}$. 
\end{proof}

We now work towards showing that certain countable models of $\Sfin$ can be built as  Fra\"{i}ss\'{e} limits $(K_{k/c},\leq)$. In Theorem \ref{thm:CountableSpectrum} we show that these are in fact, all of the countable models up to isomorphism.

\begin{lemma}\label{lem:CountableModelsAsGenerics}
	For any fixed integer $k\geq 0$, $(K_{k/c},\leq)$, where $\leq$ is inherited from $\Kfin$ is a Fra\"{i}ss\'{e} class. 
\end{lemma}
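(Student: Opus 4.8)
The plan is to verify the three defining properties of a Fra\"{i}ss\'{e} class for $(K_{k/c},\leq)$: that $\leq$ is a reasonable notion of strong substructure (reflexive, transitive, and the intersection property), that $K_{k/c}$ has the joint embedding property (JEP) under $\leq$, and that it has the amalgamation property (AP) under $\leq$. Hereditary closure is \emph{not} required here (and in fact fails, since substructures of a $\delta$-value-$k/c$ structure generally have smaller $\delta$), so the notion of ``Fra\"{i}ss\'{e} class'' in play is the one appropriate for generic constructions: a class closed under $\leq$-substructure in the weak sense needed, with countably many isomorphism types, JEP, and AP. The transitivity and intersection properties of the inherited $\leq$ follow immediately from the corresponding properties of $\leq$ on $\Kfin$ (Remark~\ref{fact:ExtSmthFraClass}), since the inherited relation is defined by quantifying the $\Kfin$-relation over all intermediate $\Kfin$-structures; reflexivity is likewise immediate.

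For amalgamation, I would start with $\f{A}\leq \f{B}$ and $\f{A}\leq \f{C}$ in $K_{k/c}$ and form the free join $\f{D}=\f{B}\oplus_{\f{A}}\f{C}$. By Fact~\ref{lem:FullAmalg1} we have $\f{D}\in\Kfin$ with $\f{C}\leq\f{D}$ (and symmetrically $\f{B}\leq\f{D}$, using $\f{A}\leq\f{C}$). Since $\delta(\f{B}/\f{A})=\delta(\f{C}/\f{A})=k/c - k/c = 0$ (because all three of $\f{A},\f{B},\f{C}$ have $\delta = k/c$), part (2) of Fact~\ref{lem:MonoRelRankOvBase} gives $\delta(\f{D}/\f{A}) = \delta(\f{B}/\f{A}) + \delta(\f{C}/\f{A}) = 0$, so $\delta(\f{D}) = k/c$ and already $\f{D}\in K_{k/c}$. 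But I still need $\f{B}\leq\f{D}$ and $\f{C}\leq\f{D}$ \emph{in the inherited sense}, i.e.\ relative to all intermediate $\Kfin$-structures; this is exactly the genuine $\leq$ in $\Kfin$, which we have. So in this balanced situation the free join itself is the amalgam, and AP in $K_{k/c}$ reduces to AP in $\Kfin$ restricted to the case where the legs have $\delta$-value equal to that of the base plus the value of the structures — which is automatic. The analogous remark handles JEP: given $\f{B},\f{C}\in K_{k/c}$, one cannot simply free-join over $\emptyset$ (that would give $\delta = 2k/c$), so instead I would invoke Lemma~\ref{lem:FinStrsInRatCase} or Lemma~\ref{lem:DimWitness} to build a common $\leq$-extension of the right dimension: take $\f{D}_0 = \f{B}\oplus_\emptyset\f{C}$, which has $\delta = 2k/c \geq k/c$ and contains both as $\leq$-substructures, then apply Lemma~\ref{lem:FinStrsInRatCase} with target value $k/c$ to obtain $\f{D}\in\Kfin$ with $\delta(\f{D}) = k/c$ into which both $\f{B}\leq\f{D}_0$ and $\f{C}\leq\f{D}_0$ remain strong (the lemma preserves strongness of substructures of $\delta$-value at most $k/c$, and $\delta(\f{B}) = \delta(\f{C}) = k/c$).

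The main obstacle I anticipate is the bookkeeping around the \emph{inherited} notion of $\leq$ versus the honest $\leq$ on $\Kfin$: one must check that when $\f{A}\leq\f{B}$ holds in $\Kfin$ for $\f{A},\f{B}\in K_{k/c}$, it coincides with the inherited relation, and more importantly that the amalgam/JEP-witness produced in $\Kfin$ actually witnesses the inherited relation in $K_{k/c}$. The point is that $\f{A}\leq\f{B}$ in $\Kfin$ automatically implies $\f{A}\leq\f{B'}$ for every $\f{A}\subseteq\f{B'}\subseteq\f{B}$ with $\f{B'}\in\Kfin$ (by transitivity/the definition of $\leq$), so the two relations agree whenever both endpoints lie in $K_{k/c}$ — hence there is no real gap, but it needs to be said. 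A secondary subtlety is confirming JEP honestly: the free-join-then-collapse construction must not introduce any minimal pair over $\f{B}$ or $\f{C}$ that $\f{D}_0$ didn't have, which is precisely what Lemma~\ref{lem:FinStrsInRatCase} guarantees. Once these points are made, the verification is routine, and I would present it as: (i) $\leq$ inherited is a partial order with the intersection property, (ii) JEP via Lemma~\ref{lem:FinStrsInRatCase}, (iii) AP via free join and Fact~\ref{lem:MonoRelRankOvBase}(2) together with Fact~\ref{lem:FullAmalg1}.
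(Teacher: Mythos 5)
Your proposal is correct and follows essentially the same route as the paper: reduce to the case where the legs are freely joined over the base, observe that $\delta(\f{B}/\f{A})=\delta(\f{C}/\f{A})=0$ for amalgamation (resp.\ $=k/c$ over $\emptyset$ for joint embedding), and collapse the rank of the free join back to $k/c$ via Lemma~\ref{lem:FinStrsInRatCase} (the paper packages this as Lemma~\ref{lem:DimWitness}). Your observation that for amalgamation the free join already lies in $K_{k/c}$ and no collapse is needed is a mild simplification of the paper's uniform use of Lemma~\ref{lem:DimWitness}, and your remarks on the inherited versus honest $\leq$ and on hereditariness not being required match the paper's conventions.
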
 

\begin{proof}
	Fix an integer $k\geq 0$ and consider $K_{k/c}$. Let $\f{A,B,C}\in K_{k/c}$. Note that for the purposes of proving amalgamation, we may as well assume $\f{B,C}$ are freely joined over $\f{A}$ and that $\f{A\leq B,C}$. Note that  $\delta(\f{B/A})=\delta(\f{C/A})=0$. The required statement follows by a simple application of Lemma \ref{lem:DimWitness} on $\f{B\oplus_{A} C}$. For joint embedding consider $\emptyset\leq\f{B,C}$. Note that  $\delta(\f{B}/\emptyset)=\delta(\f{C/\emptyset})=k/c$. Apply Lemma \ref{lem:DimWitness} on $\f{B\oplus_{\emptyset} C}$, the free join of $\f{B,C}$ \textit{over} $\emptyset$.	
\end{proof}

We now prove the following theorem. To account for the fact that $K_{k/c}$ is not closed under substructure, we tweak the second condition in the definition of the generic structure (Definition \ref{defn:GenericStr}) to: for every finite $\f{A\subseteq M}$, there exists $\f{B}\in K_{k/c}$ with $\f{A\subseteq B}$ and $\f{B\leq M}$.

\begin{thm}\label{thm:CountableModelsAsGenerics}
	Let $k$ be a fixed integer with $k\geq 0$. Let $\f{M}_{k/c}$ be the generic for the  Fra\"{i}ss\'{e} class $(K_{k/c},\leq)$ where $\leq$ is inherited from $\Kfin$. Now $\f{M}_{k/c}\models\Sfin$ and $\dim(\f{M}_{k/c})=k/c$.
\end{thm}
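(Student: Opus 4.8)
The plan is to verify the two axioms of Definition \ref{defn:AEAxioms} for $\f{M}_{k/c}$, and then compute its dimension directly from the modified genericity condition. For the first axiom, $\f{M}_{k/c}\in\z{\Kfin}$ is immediate: every finite substructure of $\f{M}_{k/c}$ sits inside some $\f{B}\in K_{k/c}\subseteq\Kfin$ by the tweaked second condition of genericity, and $\Kfin$ is closed under substructure. The genuine content is the second axiom: given $\f{A\leq B}$ from $\Kfin$ and an embedding $f:\f{A}\to\f{M}_{k/c}$, we must extend $f$ to an embedding of $\f{B}$.

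First I would reduce the extension problem to one that the genericity of $(K_{k/c},\leq)$ can handle. Let $\f{A}_0\in K_{k/c}$ with $f(\f{A})\subseteq\f{A}_0\leq\f{M}_{k/c}$, obtained from the tweaked condition applied to the finite set $f(A)$. Since $f(\f{A})\leq\f{A}_0$ (because $f(\f{A})\subseteq\f{A}_0\leq\f{M}_{k/c}$ forces $f(\f{A})\leq\f{A}_0$ — here I use that $\f{A}_0$ is finite and closed in $\f{M}_{k/c}$ together with Remark \ref{rmk:Closed=StrongForFinite}, noting $f(\f{A})$ being closed in $\f{M}_{k/c}$ follows since $f(\f{A})$ is isomorphic to $\f{A}$ and... actually more carefully: $f(\f{A})\leq\f{M}_{k/c}$ need not hold, so instead) I would argue as follows. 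We have $\f{A}\leq\f{B}$; form $\f{B}\oplus_{\f{A}}\f{A}_0'$ where $\f{A}_0'\cong\f{A}_0$ over the copy of $\f{A}$, using Fact \ref{lem:FullAmalg1} to see this free join lies in $\Kfin$ and $\f{A}_0'\leq\f{B}\oplus_{\f{A}}\f{A}_0'$. Now apply Lemma \ref{lem:FinStrsInRatCase} to $\f{B}\oplus_{\f{A}}\f{A}_0'$ to obtain $\f{D}\in\Kfin$ containing it with $\delta(\f{D})=k/c$ and with $\f{A}_0'\leq\f{D}$ (since $\delta(\f{A}_0)=k/c\leq k/c$). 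Then $\f{D}\in K_{k/c}$ and $\f{A}_0\cong\f{A}_0'\leq\f{D}$ in the sense inherited from $\Kfin$. Since $\f{A}_0\leq\f{M}_{k/c}$, the genericity of $(K_{k/c},\leq)$ gives a copy $\f{D}'\leq\f{M}_{k/c}$ with $\f{D}'\cong_{\f{A}_0}\f{D}$; restricting the isomorphism to the copy of $\f{B}$ inside $\f{D}$ yields the desired embedding extending $f$. This establishes $\f{M}_{k/c}\models\Sfin$.

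For the dimension computation, by Definition \ref{defn:DimofModel} we must show $\dim(\f{M}_{k/c}/\emptyset)=\max\{\delta(\f{B}/\emptyset):\emptyset\leq\f{B}\leq\f{M}_{k/c}\}=k/c$. Every $\f{B}\in K_{k/c}$ with $\f{B}\leq\f{M}_{k/c}$ has $\delta(\f{B})=k/c$, so the maximum is at most $k/c$, provided every finite $\f{B}\leq\f{M}_{k/c}$ has $\delta(\f{B})\le k/c$: for such $\f{B}$, enlarge it by the tweaked genericity condition to some $\f{B}^+\in K_{k/c}$ with $\f{B}\subseteq\f{B}^+\leq\f{M}_{k/c}$; since $\f{B}\leq\f{M}_{k/c}$ and $\f{B}\subseteq\f{B}^+$ we get $\f{B}\leq\f{B}^+$, hence $\delta(\f{B})\leq\delta(\f{B}^+)=k/c$. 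For the lower bound, the tweaked condition applied to any single point produces $\f{B}\in K_{k/c}$ with $\f{B}\leq\f{M}_{k/c}$ and $\delta(\f{B})=k/c$, so the maximum is exactly $k/c$; in particular it is attained, so $\dim(\f{M}_{k/c})=k/c$ rather than $\infty$.

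The main obstacle I anticipate is the bookkeeping in the extension step: ensuring that one can always absorb a given strong pair $\f{A}\leq\f{B}$ from $\Kfin$ into a pair living in $K_{k/c}$ with the strong-substructure relation correctly inherited, which is precisely what Lemma \ref{lem:FinStrsInRatCase} was designed to supply — one must be careful that the $\f{A}_0$ chosen from $\f{M}_{k/c}$ really does end up strong inside the enlarged $\f{D}$, and that ``strong in $\f{D}$ in the inherited sense'' coincides with ``strong in $\f{D}$ as elements of $\Kfin$'' here, which holds because $\f{D}\in K_{k/c}$ itself. Everything else is a routine unwinding of the definitions of genericity and of $\dim$.
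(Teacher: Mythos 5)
Your proof is correct and follows essentially the same route as the paper's: choose a closed $\f{A}_0\in K_{k/c}$ containing the image of $\f{A}$, freely join $\f{B}$ over $\f{A}$ using Fact \ref{lem:FullAmalg1}, absorb the result into a member of $K_{k/c}$ via Lemma \ref{lem:FinStrsInRatCase}, and invoke genericity of $(K_{k/c},\leq)$ over $\f{A}_0$. The dimension computation is also the paper's argument, just spelled out in more detail, so no changes are needed.
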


\begin{proof}
	Fix an integer $k\geq 0$.  From Lemma \ref{lem:CountableModelsAsGenerics}, it follows that $(K_{k/c},\leq)$ where $\leq$ is inherited from $\Kfin$ is a Fra\"{i}ss\'{e} class. Let $\f{M}_{k/c}$ be the $(K_{k/c},\leq)$ generic. Note that given $\f{B}\in\Kfin$, there is some $\f{D}\in K_{k/c}$ such that $\f{D\supseteq B}$ by Lemma \ref{lem:FinStrsInRatCase}. Thus it suffices to show that $\f{M}_{k/c}$ satisfies the extension formulas in $\Sfin$.
	
	Let $\f{A,B}\in\Kfin$ with $\f{A\leq B}$ and assume that $\f{A\finsubset M}_{k/c}$. As $\f{M}_{k/c}$ is the $(K_{k/c},\leq)$ generic, there is some $\f{C\leq M}_{k/c}$ with $\f{A\subseteq C}$ and $\delta(\f{C})=k/c$. By Fact \ref{lem:FullAmalg1}, we have that $\f{D}=\f{B}\oplus\f{C}$, the free join of $\f{B,C}$ over $\f{A}$ is in $\Kfin$ and that $\f{C\leq D}$. Now using Lemma \ref{lem:FinStrsInRatCase}, we can find $\f{G}\in K_{k/c}$ such that $\f{D\subseteq G}$ and $\f{C\leq G}$. But as $\f{M}_{k/c}$ is the $(K_{k/c},\leq)$ generic we can find a strong embedding of $\f{G}$ into $\f{M}_{k/c}$ over $\f{C}$. Thus it follows that $\f{M}_{k/c}\models\forall\z{x}\exists{\z{y}}(\Delta_{A}(\z{x})\wedge\Delta_{A,B}(\z{x},\z{y}))$. Hence it follows that $\f{M}_{k/c}\models\Sfin$. Further as noted above, given any $\f{A\finsubset M}_{k/c}$, there is some $\f{C\leq M}_{k/c}$ with $\f{A\subseteq C}$ and $\delta(\f{C})=k/c$. Hence $\dim(\f{M}_{k/c})=k/c$.
\end{proof}

We now work towards classifying the countable models of $\Sfin$ up to isomorphism using our notion of dimension. 

\begin{lemma}\label{lem:DimAndStrngEm}
	Let $\f{M}\models\Sfin$ and $\f{A\leq M}$ be finite. Let $\f{D}\in{\Kfin}$ be such that $\f{A\leq{D}}$. Then $\dim(\f{M/A})\geq{\delta(\f{D/A})}$ if and only if there is some $g$ such that $g$ strongly embeds $\f{D}$ into $\f{M}$ over $\f{A}$.
\end{lemma}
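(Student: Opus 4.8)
The plan is to prove both directions, with the backward direction being essentially immediate from the definitions and the forward direction requiring a genericity-style extension argument. For the backward direction: if $g$ strongly embeds $\f{D}$ into $\f{M}$ over $\f{A}$, then $g(\f{D})\leq\f{M}$ and $g(\f{D})\cong_{\f{A}}\f{D}$, so $g(\f{D})$ witnesses $\delta(\f{D/A})=\delta(g(\f{D})/\f{A})\leq\dim(\f{M/A})$ directly from Definition \ref{defn:DimofModel}. The content is all in the forward direction, so I will concentrate there.

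For the forward direction, suppose $\dim(\f{M/A})\geq\delta(\f{D/A})$. By Definition \ref{defn:DimofModel} there is some $\f{A\leq\f{B}\leq\f{M}}$ (or a sequence approaching the supremum, but since ranks lie in $\frac{1}{c}\mathbb{Z}$ and are bounded on strong substructures containing $\f{A}$ up to any fixed bound, I can take an actual maximizer, or at least some $\f{B}$ with $\delta(\f{B/A})\geq\delta(\f{D/A})$) with $\f{B\leq M}$ and $\delta(\f{B/A})\geq\delta(\f{D/A})$. I may assume $\f{B}\cap\f{D}=\f{A}$ and form the free join $\f{D}\oplus_{\f{A}}\f{B}$; since $\f{A\leq B}$, Fact \ref{lem:FullAmalg1} gives $\f{D}\oplus_{\f{A}}\f{B}\in\Kfin$ and $\f{B\leq D}\oplus_{\f{A}}\f{B}$. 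Now I apply Lemma \ref{lem:DimWitness} with the roles $\f{A},\f{D},\f{B}$ in place of $\f{A},\f{B},\f{C}$ (valid since $\delta(\f{B/A})\geq\delta(\f{D/A})$): this produces $\f{H}\in\Kfin$ with $\f{A},\f{D},\f{B}\leq\f{H}$ and $\delta(\f{H/B})=0$. The condition $\delta(\f{H/B})=0$ together with $\f{B\leq H}$ means $\f{B}$ is strong in $\f{H}$ and $\f{H}$ adds no rank over $\f{B}$; combined with $\f{B\leq M}$, this lets me extend. Concretely, since $\f{B\leq M}$ and $\f{B\leq H}$, condition (2) in Definition \ref{defn:GenericStr} — or, since $\f{M}$ need not literally be the generic but only a model of $\Sfin$, the extension axioms of Definition \ref{defn:AEAxioms} — give an embedding of $\f{H}$ into $\f{M}$ fixing $\f{B}$ pointwise, hence fixing $\f{A}$. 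Restricting this embedding to $\f{D}$ and composing with the identification $\f{D}\subseteq\f{H}$, I get an embedding $g:\f{D}\to\f{M}$ over $\f{A}$. It remains to check $g(\f{D})\leq\f{M}$: this is where I use that $\f{D\leq H}$, that the image of $\f{H}$ in $\f{M}$ is strong (it maps $\f{B}$ strongly and adds nothing over it — or, more carefully, I invoke finite closures and the fact that $\f{D}$ is closed inside the image of $\f{H}$ which is in turn controlled by $\f{B\leq M}$), and transitivity of $\leq$ via Remark \ref{fact:ExtSmthFraClass}.

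The main obstacle is precisely the last point: ensuring that the embedding of $\f{D}$ obtained from the extension axioms lands as a \emph{strong} substructure of $\f{M}$, not merely a substructure. The extension axioms of $\Sfin$ as stated (Definition \ref{defn:AEAxioms}(2)) extend an embedding of $\f{A}$ to an embedding of $\f{B}$ whenever $\f{A\leq B}$, but do not by themselves guarantee strength of the image. The fix is to route the argument through a set $\f{H}$ large enough that $\f{B}\leq\f{H}$ with $\delta(\f{H/B})=0$ and $\f{D\leq H}$: then any embedding of $\f{H}$ over $\f{B}$, with $\f{B\leq M}$, has strong image — because no proper extension inside $\f{M}$ can drop $\delta$ below $\delta(\f{B})=\delta(\f{H})$, and $\f{D\leq H}$ propagates strength down to $\f{D}$. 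I would spell out this last inference using Remark \ref{rmk:Closed=StrongForFinite} (closed $=$ strong for finite sets) and transitivity of $\leq$; it is the step most in need of care but is a routine consequence of the $\delta$-bookkeeping in Fact \ref{lem:MonoRelRankOvBase}.
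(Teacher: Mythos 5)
Your proof is correct and follows essentially the same route as the paper's: pick a strong substructure $\f{B}\leq\f{M}$ (the paper's $\f{F}$) with $\delta(\f{B/A})\geq\delta(\f{D/A})$, free-join $\f{D}$ with $\f{B}$ over $\f{A}$, apply Lemma \ref{lem:DimWitness} to get $\f{H}$ with $\f{D},\f{B}\leq\f{H}$ and $\delta(\f{H/B})=0$, embed $\f{H}$ over $\f{B}$ via the extension axioms, and note that the image is automatically strong since $\f{B}\leq\f{M}$ forbids any finite superset of the image from having smaller rank, so transitivity gives $g(\f{D})\leq\f{M}$. The only cosmetic difference is that the paper isolates the $\delta(\f{D/A})=0$ case as a separate first step and then reduces the general case to it, whereas you fold that observation into the final strength check.
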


\begin{proof}
	The statement that if there is some $g$ such that $g$ strongly embeds $\f{D}$ into $\f{M}$ over $\f{A}$, then $\dim(\f{M/A})\geq\delta(\f{D/A})$ is immediate from the definition. Thus we prove the converse. Let $\f{A}\leq\f{M}$ be finite. Let $\f{D}\in\Kfin$ be such that $\f{A\leq{D}}$.
	
	First assume that  $\delta(\f{D/A})=0$. Now as $\Sfin\models{\forall\z{x}\exists\z{y}(\Delta_{\f{A}}(\z{x})\wedge\Delta_{\f{A},\f{D}}(\z{x},\z{y}))}$. Thus there is some $\f{A\subseteq{D'}\subseteq M}$ such that $\f{D} \cong_{\f{A}} \f{D'}$. Further as $\delta(\f{D'/A})=0$, from $(2)$ of Lemma \ref{lem:CompleteTypesOverSets}, $\f{D'\leq M}$. Thus regardless of the value of $\dim(\f{M/A})$, if $\delta(\f{D/A})=0$ then there is some $g$ such that $g$ strongly embeds $\f{D}$ into $\f{M}$ over $\f{A}$. 
	
	Now assume that $m/c=\delta(\f{D/A})\leq{\dim(\f{M/A})}$ with $m \geq 1$ and further assume that $\dim(\f{M/A})\geq k/c$ with $k\geq m$. Let $\f{A}\leq\f{F}\leq{\f{M}}$ be such that $\delta(\f{F/A})=k/c$. Let $\f{G}=\f{D}\oplus\f{F}$, the free join of $\f{D,F}$ over $\f{A}$. By Lemma \ref{lem:DimWitness}, there exists $\f{H}\in\Kfin$ with $\f{G\subseteq H}$ and $\f{A,D,F\leq H}$ and $\delta(\f{H/F})=0$. Since $\f{F\leq{M}}$ and $\delta(\f{H/F})=0$ we are in the setting above. So take a strong embedding $g$ of $\f{H}$ into $\f{M}$ over $\f{F}$. Clearly $g$ fixes $\f{A}$ and $\f{D}$ has the property that $g(\f{D})\leq \f{F} \leq \f{M}$ and thus $g(\f{D})\leq{\f{M}}$. 
\end{proof}

We now obtain:

\begin{thm}\label{thm:CountableSpectrum}
	Let $\f{M,N}\models\Sfin$ be countable. Now $\f{M} \cong \f{N}$ if and only if $\dim(\f{M})=\dim(\f{N})$ and $\dim(\f{M})=\infty$ if and only if $\f{M}$ is the generic for $\Kfin$. Thus there are precisely $\aleph_0$ many non-isomorphic models of $\Sfin$ of size $\aleph_0$. Further each countable model of $\Sfin$ can be built up from a subclass of $(\Kfin,\leq)$. 
\end{thm}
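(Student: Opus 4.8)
The plan is to combine Theorem \ref{thm:CountableModelsAsGenerics} with a back-and-forth argument governed by the dimension invariant. First I would establish the easy directions: if $\f{M}\iso\f{N}$ then trivially $\dim(\f{M})=\dim(\f{N})$, and if $\f{M}$ is the generic for $(\Kfin,\leq)$ then $\dim(\f{M})=\infty$, since $\Kfin$ contains structures $\f{B}$ with $\emptyset\leq\f{B}$ and $\delta(\f{B})$ arbitrarily large (e.g. free joins of points), and by genericity each such $\f{B}$ strongly embeds. Conversely, if $\dim(\f{M})=\infty$ I would show $\f{M}$ satisfies the defining properties of the $(\Kfin,\leq)$ generic in Definition \ref{defn:GenericStr}: condition (1) is finite closures plus Theorem \ref{thm:QuantElim}(7) together with the chain obtained by exhausting $M$, and condition (2) is precisely Lemma \ref{lem:DimAndStrngEm} — given $\f{A\leq B}$ in $\Kfin$ with $\f{A\leq M}$, since $\dim(\f{M/A})\geq\dim(\f{M})-\delta(\f{A})=\infty\geq\delta(\f{B/A})$ (using flatness/monotonicity to see $\dim(\f{M/A})$ is still infinite), Lemma \ref{lem:DimAndStrngEm} gives a strong copy of $\f{B}$ over $\f{A}$. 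Uniqueness of the generic (Fact \ref{fact:ExistGenStr}) then pins down $\f{M}$.

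For the main equivalence, suppose $\f{M},\f{N}\models\Sfin$ are countable with $\dim(\f{M})=\dim(\f{N})$. If this common value is $\infty$, both are the $(\Kfin,\leq)$ generic by the previous paragraph, hence isomorphic. If the common value is finite, say $k/c$, then I would argue each of $\f{M},\f{N}$ is isomorphic to the generic $\f{M}_{k/c}$ of $(K_{k/c},\leq)$ from Theorem \ref{thm:CountableModelsAsGenerics}; uniqueness of that generic then gives $\f{M}\iso\f{N}$. To see $\f{M}\iso\f{M}_{k/c}$ I would verify that $\f{M}$ satisfies the (tweaked) genericity conditions for $(K_{k/c},\leq)$: every finite subset of $M$ is contained in some $\f{C}\in K_{k/c}$ with $\f{C\leq M}$, and the extension property against $\f{A\leq B}$ with $\f{A,B}\in K_{k/c}$ holds. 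The first point follows from finite closures (Theorem \ref{thm:QuantElim}(7)), which gives $\f{A\leq C_0\leq M}$ with $\f{C}_0\in\Kfin$, followed by Lemma \ref{lem:DimAndStrngEm}: since $\delta(\f{C}_0)\leq k/c=\dim(\f{M})$, we can strongly embed over $\f{C}_0$ a structure $\f{C}\in K_{k/c}$ (produced by Lemma \ref{lem:FinStrsInRatCase}) with $\f{C}_0\subseteq\f{C}$ and $\delta(\f{C})=k/c$ and $\f{C}_0\leq\f{C}$; transitivity of $\leq$ yields $\f{C\leq M}$. The extension property is again a direct instance of Lemma \ref{lem:DimAndStrngEm}, exactly as in the proof of Theorem \ref{thm:CountableModelsAsGenerics}.

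The remaining clause — that there are precisely $\aleph_0$ non-isomorphic countable models — is then immediate: the possible dimensions are $\{k/c:k\in\omega\}\cup\{\infty\}$, a countably infinite set, and by the equivalence just proved distinct dimensions give non-isomorphic models while every value is realized (by $\f{M}_{k/c}$, respectively the $\Kfin$-generic). The final sentence, that each countable model is built from a subclass of $(\Kfin,\leq)$, is witnessed by $K_{k/c}\subseteq\Kfin$ (or $\Kfin$ itself in the infinite-dimensional case), with $\leq$ inherited.

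I expect the main obstacle to be the finite-dimensional isomorphism step: one must be careful that $\dim(\f{M})=k/c<\infty$ genuinely forces the strong-embedding extension property for \emph{all} $\f{A\leq B}$ in $K_{k/c}$, i.e. that $\dim(\f{M/A})\geq\delta(\f{B/A})$ whenever $\f{A}\in K_{k/c}$ and $\f{A\leq M}$ — this needs $\dim(\f{M/A})=\delta(\f{M})-\delta(\f{A})$ type bookkeeping, which should follow from flatness (Theorem \ref{thm:QuantElim}(9)) and the additivity of $\delta$ over chains, but the precise accounting of relative dimension is where care is required. A secondary subtlety is checking that the tweaked genericity axiom for $(K_{k/c},\leq)$ truly characterizes $\f{M}_{k/c}$ up to isomorphism via a back-and-forth; this is routine once the class is known to be Fra\"{i}ss\'{e} (Lemma \ref{lem:CountableModelsAsGenerics}) and the chain condition is verified, but it must be stated.
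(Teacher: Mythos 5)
Your proposal is correct, but it organizes the finite-dimension case differently from the paper. Where you prove $\f{M}\iso\f{N}$ by showing that each of $\f{M},\f{N}$ satisfies the (tweaked) genericity axioms for $(K_{k/c},\leq)$ and then invoking uniqueness of that generic, the paper runs a direct back-and-forth between $\f{M}$ and $\f{N}$: it starts from $\f{A\leq M}$ with $\delta(\f{A})=\dim(\f{M})$ (obtained via Lemma \ref{lem:DimAndStrngEm} over $\emptyset$), and at each stage adjoins the intrinsic closure of one new point, which has relative rank $0$ over the current closed set and hence transfers to the other model by the $\delta(\f{D/A})=0$ case of Lemma \ref{lem:DimAndStrngEm}. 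The two arguments have the same mathematical content --- the uniqueness of the Fra\"{i}ss\'{e} generic is itself a back-and-forth --- but your packaging has the advantage that the final clause of the theorem (every countable model is built from a subclass of $(\Kfin,\leq)$) falls out for free, at the cost of having to state and verify uniqueness of the generic for the tweaked axioms, which you correctly flag. The paper instead derives that clause afterwards by combining the classification with Theorem \ref{thm:CountableModelsAsGenerics}. The infinite-dimensional case is handled the same way in both. One place where you should replace a placeholder with an argument: your appeal to ``flatness/monotonicity'' for $\dim(\f{M/A})=\infty$ is cashed out in the paper not by flatness but by a closure computation --- given $\f{B\leq M}$ of large rank, set $\f{G}=\icl_{\f{M}}(\f{AB})$; then $\f{A,B\leq G\leq M}$, so $\delta(\f{G/A})=\delta(\f{G})-\delta(\f{A})\geq\delta(\f{B})-\delta(\f{A})$, which is unbounded. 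The same computation justifies your claim that $\dim(\f{M/C}_0)\geq k/c-\delta(\f{C}_0)$ in the finite case. With that supplied, your route goes through.
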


\begin{proof}
	Since $\delta$ is invariant under isomorphism, it immediately follows that if $\f{M} \cong \f{N}$, then $\dim(\f{M})=\dim(\f{N})$. Now from Theorem \ref{thm:CountableModelsAsGenerics}, it follows that the number of non-isomorphic countable models is at least $\aleph_0$.\\
	
	\noindent\textit{Case 1}: $\dim(\f{M})=\dim(\f{N})=k/c$ for some $k\in\omega$. Fix enumerations for $M,N$. Let $\f{A\leq M}$ with $\dim(\f{M/A})=0$. Thus $\delta(\f{A})=\dim(\f{M})=\dim(\f{N})$. Assume that we have constructed a \textit{strong embedding} $g:\f{A\rightarrow N}$. Pick $b\in{\f{N}-g(\f{A})}$, where $b$ in the enumeration corresponds to the element of $N$ with least index not in $g(A)$. Consider $icl_{\f{N}}(\{b\}\cup{g(\f{A})}) =\f{B}\leq{\f{N}}$. Now $\f{B}$ is finite. Since $g(\f{A})\leq{\f{N}}$ and $g(\f{A})=\dim(\f{N})$, it follows that $\delta(\f{B}/g(\f{A}))=0$ and ${g(\f{A})\leq{\f{B}}}$. Now as $\f{A}\cong g(\f{A})$ by Lemma \ref{lem:DimAndStrngEm}, there exists a \textit{strong embedding} $g':\f{B\rightarrow M}$ and $g'|_{g(\f{A})}=g^{-1}$. Clearly this allows us to form a back and forth system between $\f{M,N}$. 
	
	Thus all that remains to be shown is that we can find a strong embedding of $\f{A\leq M}$ where $\delta(\f{A})=\dim(\f{M})$. To see this first note that $\emptyset\leq\f{N}$. Further $\dim(\f{N}/\emptyset)=\delta(\f{A}/\emptyset)$. Thus there exists some strong embedding of $\f{A}$ over $\emptyset$ into $\f{N}$ by an application of Lemma \ref{lem:DimAndStrngEm} as required.\\
	
	\noindent\textit{Case 2}: $\f{M}\models\Sfin$ and $\dim(\f{M})=\infty$. We claim that in this case $\f{M}$ is isomorphic to the generic. Clearly $\f{M}$ has finite closures and hence condition $(1)$ of the generic is satisfied. Note that if we show that $\dim(\f{M})=\infty$ implies that for any $\f{A\leq M}$, $\dim(\f{M/A})=\infty$, then condition $(2)$ follows immediately from Lemma \ref{lem:DimAndStrngEm}. We claim that this is indeed the case. By way of contradiction, assume that there is some $\f{A\leq M}$ such that $\dim\f{(M/A)}$ is finite. Now there is some $\f{A\leq D\leq M}$ such that $\dim(\f{M/A})=\delta(\f{D/A})$. It is immediate from the definition that $\delta(\f{M/D})=0$. As $\dim(\f{M})=\infty$, fix a $\f{B\leq M}$ with $\delta(\f{B})>\delta(\f{D})$. Consider $G$, the closure of $BD$ in $M$. Now $G$ is finite and since $B,D\leq M$, $B,D\leq G$. Further $\delta(G/D)=0$ as $\dim(M/D)=0$. So $\delta(G)=\delta(D)$. But $B\leq M$, so $\delta(G/B)\geq 0$ and hence $\delta(G)\geq \delta(B)$. Thus $\delta(B)\leq \delta(D)$, a contradiction to our choice of $B$ that establishes the claim. Hence it follows that the number of non-isomorphic countable models of $\Sfin$ is $\aleph_0$.
	
	From Theorem \ref{thm:CountableModelsAsGenerics}, it follows that we can construct a countable model of a fixed dimension (the $\dim(\f{M})=\infty$ case being the generic as seen above) as the generic of a subclass of $(\Kfin,\leq)$. But as the dimension determines the countable model up to isomorphism, we obtain the result.  
\end{proof}

We now use our notion of dimension to characterize elementary embedability.  

\begin{thm}\label{thm:CountableModelChain}
	Let $\f{M, N}$ be countable models of $\Sfin$. If $\dim(\f{M})\leq \dim{\f{N}}$, then there is some elementary embedding $f:\f{M\rightarrow N}$. Thus there is an elementary chain  $\f{M}_{0}\preccurlyeq\ldots\preccurlyeq\f{M_n}\ldots\preccurlyeq\f{M}_\omega$ of countable models of $\Sfin$ with each countable model isomorphic to some element of the chain.
\end{thm}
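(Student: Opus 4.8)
The plan is to build the elementary embedding $f\colon\f{M}\to\f{N}$ by a back-and-forth argument, exactly mirroring the construction in the proof of Theorem \ref{thm:CountableSpectrum}, but now starting the forth-direction from a strong substructure of $\f{M}$ that realizes the dimension of $\f{M}$ rather than insisting on a bijection. First I would fix enumerations of $M$ and $N$ and dispose of the case $\dim(\f{M})=\dim(\f{N})$, which is handled by Theorem \ref{thm:CountableSpectrum} (indeed it gives an isomorphism). So assume $\dim(\f{M})<\dim(\f{N})$; I would treat the subcase $\dim(\f{M})=k/c$ finite first, and then remark that $\dim(\f{M})=\infty$ forces $\dim(\f{N})=\infty$ and hence both are the generic by Theorem \ref{thm:CountableSpectrum}.

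For the finite case: choose $\f{A}\leq\f{M}$ with $\delta(\f{A})=\dim(\f{M})=k/c$, so that $\dim(\f{M}/\f{A})=0$. Since $\emptyset\leq\f{N}$ and $\dim(\f{N})\geq k/c=\delta(\f{A}/\emptyset)$, Lemma \ref{lem:DimAndStrngEm} yields a strong embedding $g_0\colon\f{A}\to\f{N}$. Now I would run a back-and-forth: at each \emph{forth} step, with a finite $\f{A}\leq\f{A}'\leq\f{M}$ and a strong embedding $g\colon\f{A}'\to\f{N}$ already built, pick the least-indexed $a\in M\setminus A'$, set $\f{B}=\icl_{\f{M}}(\f{A}'a)$, note $\f{A}'\leq\f{B}\leq\f{M}$, and observe $\delta(\f{B}/\f{A}')=0$ because $\dim(\f{M}/\f{A}')\le\dim(\f{M}/\f{A})=0$ (using that $\f{A}\le\f{A}'$ and monotonicity of relative $\delta$ over strong bases). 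Then $\delta(g(\f{A}')\to)$ — more precisely $\delta(\f{B}'/g(\f{A}'))=0$ for the copy $\f{B}'$ of $\f{B}$ — so by the $\delta=0$ part of Lemma \ref{lem:DimAndStrngEm} (which holds regardless of $\dim(\f{N}/g(\f{A}'))$) we get a strong embedding extending $g$ over $g(\f{A}')$. At each \emph{back} step, with the least-indexed $b\in N\setminus g(\f{A}')$, set $\f{C}=\icl_{\f{N}}(g(\f{A}')b)\leq\f{N}$; here I \emph{cannot} assume $\delta(\f{C}/g(\f{A}'))=0$, so instead I use the full strength of Lemma \ref{lem:DimAndStrngEm}: since $\dim(\f{M}/\f{A}')$ need not be $0$ — wait, it is $0$ — so actually $\delta(\f{C}/g(\f{A}'))$ could be positive while $\dim(\f{M}/\f{A}')=0$, and this is precisely where $\dim(\f{M})\le\dim(\f{N})$ is not enough locally. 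The fix is to choose the initial $\f{A}$ more carefully: take $\f{A}\le\f{M}$ \emph{and} a strong embedding $g_0$ into $\f{N}$ whose image $g_0(\f{A})$ is \emph{not} strong-maximal in $\f{N}$, i.e. arrange $\dim(\f{N}/g_0(\f{A}))=\dim(\f{N})-\dim(\f{M})>0$; this is possible because $g_0(\f{A})\le\f{N}$ with $\delta(g_0(\f{A}))=k/c$ and $\dim(\f{N})>k/c$ forces $\dim(\f{N}/g_0(\f{A}))=\dim(\f{N})-k/c$ by flatness/submodularity (Theorem \ref{thm:ForkOvClsdSets}(9)). Then in the back step $\delta(\f{C}/g(\f{A}'))\le\dim(\f{N}/g(\f{A}'))$ is automatic, and Lemma \ref{lem:DimAndStrngEm} applies in $\f{M}$ provided $\dim(\f{M}/\f{A}')\ge\delta(\f{C}/g(\f{A}'))$ — but $\dim(\f{M}/\f{A}')=0$, which could fail.

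Reconsidering, the honest route is: the \emph{back} direction is the genuine obstacle, and to handle it one should run the forth-and-back \emph{only in the forth direction} to get an elementary embedding — a one-sided back-and-forth suffices for elementary embedding (we need $f$ elementary, not onto). Concretely: enumerate $M=\{a_i:i<\omega\}$, build an increasing chain of finite strong substructures $\f{A}=\f{A}_0\le\f{A}_1\le\cdots$ of $\f{M}$ with $\bigcup_i A_i=M$ and $a_i\in A_{i+1}$, together with strong embeddings $g_i\colon\f{A}_i\to\f{N}$ with $g_{i+1}\!\restriction\! A_i=g_i$; set $f=\bigcup_i g_i$. At stage $i$, $\f{A}_{i+1}=\icl_{\f{M}}(\f{A}_i a_i)$ satisfies $\f{A}_i\le\f{A}_{i+1}\le\f{M}$ with $\delta(\f{A}_{i+1}/\f{A}_i)=0$ (as $\dim(\f{M}/\f{A}_i)=0$ since $\f{A}\le\f{A}_i$ and $\dim(\f{M}/\f{A})=0$), so the $\delta=0$ clause of Lemma \ref{lem:DimAndStrngEm} extends $g_i$ to $g_{i+1}$. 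That $f$ is elementary: by Theorem \ref{thm:QuantElim}(1) it suffices to preserve chain-minimal extension formulas, and since each $\f{A}_i\le\f{M}$ and $g_i(\f{A}_i)\le\f{N}$, for any tuple $\z{a}$ from $A_i$ and any minimal pair $(\f{B},\f{B}')$ with $\f{B}=\acl(\z{a})$, $\f{M}\models\Psi_{\f{B},\f{B}'}(\z{a})$ iff such a configuration lies inside the closed set — use finite closures (Theorem \ref{thm:ForkOvClsdSets1}(7)) and $(2)$ of the axioms for $\Sfin$ to see both sides are equivalent to $\delta$-data preserved by the strong embedding, together with Lemma \ref{lem:CompleteTypesOverSets}(6) to pin down types over closed sets. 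Finally, the chain $\f{M}_0\preccurlyeq\cdots\preccurlyeq\f{M}_n\preccurlyeq\cdots\preccurlyeq\f{M}_\omega$ is obtained by letting $\f{M}_n$ be the model of dimension $n/c$ from Theorem \ref{thm:CountableModelsAsGenerics} and $\f{M}_\omega$ the generic ($\dim=\infty$); the embeddings are elementary by the first part, and every countable model is isomorphic to one of these by Theorem \ref{thm:CountableSpectrum}.

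I expect the main obstacle to be verifying that the one-sided union $f=\bigcup g_i$ is genuinely \emph{elementary} (not merely a strong embedding preserving quantifier-free data): this requires knowing that strong embeddings between models of $\Sfin$ whose domains exhaust $M$ by closed sets preserve the chain-minimal extension formulas of Theorem \ref{thm:QuantElim}(1), which in turn rests on the interaction of finite closures, the extension axioms, and stationarity of types over closed sets (Lemma \ref{lem:CompleteTypesOverSets}(6)). The dimension hypothesis $\dim(\f{M})\le\dim(\f{N})$ enters only at the very first step, to launch $g_0$ via Lemma \ref{lem:DimAndStrngEm}; thereafter every extension step is a pure $\delta=0$ amalgamation that needs no dimension bookkeeping.
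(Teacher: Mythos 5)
Your final argument is correct and essentially the paper's own proof: after discarding the back-and-forth, you settle on the same one-sided construction — launch a strong embedding of a maximal-dimension $\f{A}\leq\f{M}$ into $\f{N}$ via Lemma \ref{lem:DimAndStrngEm}, extend along a chain of $\delta=0$ closed extensions exhausting $M$, and verify elementarity through the quantifier elimination down to chain-minimal extension formulas using that $f(M)$ is closed in $\f{N}$, finishing the chain statement exactly as the paper does via Theorems \ref{thm:CountableModelsAsGenerics} and \ref{thm:CountableSpectrum}. The only (harmless) difference is that you check preservation of chain-minimal formulas directly, whereas the paper runs a Tarski--Vaught-style test on existential witnesses.
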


\begin{proof}
	Let $\f{M,N}$ be countable models of $\Sfin$ with $\dim(\f{M})\leq\dim(\f{N})$. Note that if $\dim(\f{M})=\dim(\f{N})$, then by Theorem \ref{thm:CountableSpectrum}, $\f{M}\cong\f{N}$. So assume that $\dim(\f{M})<\dim(\f{N})$ and fix an enumeration $\{m_i:i\in\omega\}$. Now we have that $\dim(\f{M})<\infty$. Let $\f{A\leq M}$ be such that $\delta(\f{A})=\dim(\f{M})$. Now by Lemma \ref{lem:DimAndStrngEm}, there exists a strong embedding $f_1$ of $\f{A}$ into $\f{N}$. Let $\f{ B\leq \f{M}}$ be such that $A \{m_{i}\}\subseteq{B}$ where $i$ is the least index such that $m_{i}\notin A$. Note that as $\delta(\f{A})=\dim(\f{M})$, $\delta(\f{B})=\delta(\f{A})$. Again using Lemma \ref{lem:DimAndStrngEm}, we can extend $f_1$ to $f_2$ so that $f_2$ is a strong embedding of $\f{B}$ into $\f{N}$ \textit{over} $\f{A}$.
	
	Proceeding iteratively we can find a $\leq$ chain $\{\f{A}_i:i\in\omega\}$ such that $\f{M}=\bigcup_{i<\omega}\f{A}_i$ and $f:\f{M\rightarrow N}$ such that $f(\f{A}_i)\leq \f{N}$ for each $i\in\omega$. It is easily seen that $f$ is an isomorphic embedding. We claim that $f$ is actually an elementary embedding of $\f{M}$ into $\f{N}$. Note that given $\f{C\leq M}$ with $C$ finite, there is some $\f{A_i}$ with $\f{C\leq A}_i\leq \f{M}$. Using the transitivity of $\leq$, it easily follows that $f(\f{C})\leq \f{N}$. In particular $f(\f{M})$ is (algebraically) closed in $\f{N}$. For notational convenience we will assume that $\f{M\subseteq N}$.    
	
	Let $\psi(\z{x},\z{y})$ be an $L$ formula. Let $\z{a}\in M^{lg(\z{x})}$. Assume that $\f{N}\models\exists\z{y}\psi({\z{a},\z{y}})$. But $\psi(\z{x},\z{y})$ is equivalent to the boolean combination of chain minimal formulas, say $\Sfin\vdash\forall(\z{x})(\exists\psi(\z{x},\z{y})\leftrightarrow\bigwedge_{i<n}\varphi_{i}(\z{x},\z{y}))$ where each $\varphi(\z{x},\z{y})$ is either a chain minimal formula or the negation of a chain minimal formula. Suppose that $\z{b}\in N^{lg(\z{y})}$ is such that $\f{N}\models\psi(\z{a},\z{b})$. If $\varphi_{i}$ is a chain minimal formula then it follows that $\z{b}\in M^{lg(\z{y})}$ as $M$ is a closed set. So assume that each $\varphi_i$ is the negation of a chain minimal formula. Note that we may split $\z{b}=\z{b}_1\z{b}_2$ where $\z{b}_1$ is formed via a minimal chain and $A\z{b}_1\leq N$. As above, it follows that $\z{b}_1\subseteq \in M^{lg(\z{y})-lg(\z{b}_1)}$. But as $\f{M}\models\Sfin$, it follows that there exists a $\z{b'}_2 \in M^{lg(\z{y})-lg(\z{b}_1)}$ that is isomorphic to $\z{b}_2$ over $A\z{b}_1$. It is now easily seen that the $\z{b}_1\z{b'}_2\in M^{lg(\z{y})}$ and $\f{N}\models\varphi_i(\z{a},\z{b}_1\z{b'}_2)$ for each $i$. Thus $\f{N}$ is an elementary extension of $\f{M}$. 
	
	Note that given an elementary chain $\f{M}_1\preccurlyeq\ldots\preccurlyeq\f{M}_n$ of models of $\Sfin$ we may construct $\f{M}_{n+1}$ such that $\f{M}_1\preccurlyeq\ldots\preccurlyeq\f{M}_n\preccurlyeq\f{M}_{n+1}$. Note that we may also insist that $\dim(\f{M}_k)=k/c$. Now given an elementary chain $\f{M}_{0}\preccurlyeq\ldots\preccurlyeq\f{M_n}\ldots\preccurlyeq$ set $\f{M}_\omega=\bigcup_{n<\omega}\f{M}_{n}$. As elementary embeddings preserve closed sets it is easily seen that $\dim({\f{M}_\omega})=\infty$. The rest of the claim now follows from Theorem \ref{thm:CountableSpectrum}.    
\end{proof}

\section{Regular Types}\label{sec:RegularTypes}

In Section \ref{sec:RegularTypes} we turn our attention towards the study of regular types. We fix a monster model $\mathbb{M}$ of $\Sfin$. Recall the notions of  $d(A)$ and $d(B/X)$ for some finite $A\subseteq \mathbb{M}$ and $X\subseteq \mathbb{M}$ from Definition \ref{defn:RelDimFunc}. We begin by extending this notion to a type as follows (see also \cite{BdShelah})

\begin{defn}
	Let $\mathbb{M}$ be a monster model of $\Sfin$ and let $X$ be a small subset of $\mathbb{M}$. Let $p\in S(X)$. We let $d(p/X)=d(\z{b}/X)$ for some (equivalently any) realization $\z{b}$ of $p$. 
\end{defn}

Now, due to $\omega$-stability and weak elimination of imaginaries (see $(3)$ and $(5)$ of Theorem \ref{thm:ForkOvClsdSets1}), it suffices to restrict our attention to non-algebraic types over finite algebraically closed sets in the home sort for the study of regular types. So fix some finite $A\leq\mathbb{M}$ (recall that algebraically closed sets are precisely the intrinsically closed ones). In what follows we freely use regular types, orthogonality, modular types etc. and facts about them. The relevant definitions and results can be found in \cite{Pil2}.

\begin{remark}\label{rmk:SettingNFOmgSt}
	Let ${A\leq \mathbb{M}}$ be finite and $\z{b}$ be finite such that $\z{b}\cap{A}=\emptyset$. Now let ${A\subseteq{C}}$ also be finite. Note that $\z{b}\dnf_{{A}}{C} $ if and only if $\acl(\z{b}{A})\dnf_{\acl({A})}\acl({C})$. Since $\Sfin$ has finite closures it follows that $\acl({b}{A}), \acl(C)$ are both finite. Thus in order to understand non-forking, it suffices to look at types $p\in S({A})$ such that $x\neq a \in p$ for all $a\in{{A}}$ such that for any $\z{b}\models p$, $\z{b}{A}\leq \mathbb{M}$. Note that this information, along with the atomic diagram of some (of any) realization of $p$ is sufficient to determine $p$ uniquely as noted in $(1)$ of Lemma \ref{lem:CompleteTypesOverSets}.  Also such a type $p$ is non-algebraic and stationary as ${A}$ is algebraically closed.
\end{remark}

In light of our comments at the beginning of Section \ref{sec:RegularTypes} and Remark \ref{rmk:SettingNFOmgSt} it suffices to study \textit{basic types over finite sets} in order to understand regular types (i.e. we can choose a basic type to represent the required parallelism class).  

\begin{defn}\label{defn:basicType}
	Let $A\leq \mathbb{M}$ be finite and $p\in S(A)$, we say that $p$ is a \textit{basic type} if $x\neq a \in p$ for all $a\in{A}$ and for some (equivalently any) $\z{b}\models p$, $\z{b}{A}\leq {\mathbb{M}}$. 
\end{defn}

\begin{remark}
	Note that if $\f{A,B}\in K_L$ with $\f{A}\in\Kfin$ and $\f{A\leq B}$, then $\f{B}\in\Kfin$.
\end{remark}

\begin{lemma}\label{lem:ExistenceStngExt}
	Let $\f{A}\in \Kfin$. Then there exists $\f{B}\in\Kfin$ such that $\f{A\leq{B}}$ and $\delta(\f{B/A})=1/c$. 
\end{lemma}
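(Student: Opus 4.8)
The plan is to build $\f{B}$ from $\f{A}$ by a single "positive" extension step, mirroring how Lemma~\ref{lem:FinStrsInRatCase} builds structures of a prescribed dimension but now only needing to raise $\delta$ by exactly $1/c$. First I would take $\f{D}_0$ to be the free join of $\f{A}$ with a single new point $v$ carrying no relations, over $\emptyset$. By $(2)$ of Fact~\ref{lem:MonoRelRankOvBase} we have $\f{A\leq D}_0$ and $\delta(\f{D}_0)=\delta(\f{A})+1$, so $\delta(\f{D}_0/\f{A})=1$, which is $c-1$ too large: we have overshot by $(c-1)/c$. The idea is then to correct this by stringing together $c-1$ essential minimal pairs, each of which drops the relative rank by exactly $1/c$ while keeping $\f{A}$ strong.

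The key steps, in order: (1) set $\f{D}_0=\f{A}\oplus_\emptyset\{v\}$ and record $\delta(\f{D}_0/\f{A})=1$, $\f{A\leq D}_0$; (2) since $\delta(\f{D}_0)=\delta(\f{A})+1>0$, apply Theorem~\ref{lem:InfMinPa} repeatedly to build a chain $\f{D}_0\subseteq\f{D}_1\subseteq\cdots\subseteq\f{D}_{c-1}$ in $\Kfin$ where each $(\f{D}_i,\f{D}_{i+1})$ is an essential minimal pair with $\delta(\f{D}_{i+1}/\f{D}_i)=-1/c$ (here one checks $\delta(\f{D}_i)>0$ at each stage, which holds since $\delta(\f{D}_i)=\delta(\f{A})+1-i/c\geq\delta(\f{A})+1/c>0$ for $i\leq c-1$); (3) verify by induction on $i$ that $\f{A\leq D}_i$, using exactly the argument from the proof of Lemma~\ref{lem:FinStrsInRatCase}: for $\f{A\subseteq F\subseteq D}_{i+1}$, if $\f{F=D}_{i+1}$ then $\delta(\f{F})=\delta(\f{A})+1-(i+1)/c\geq\delta(\f{A})+1/c>\delta(\f{A})$, while if $\f{F\subsetneq D}_{i+1}$ then $\delta(\f{F}/\f{D}_i\cap\f{F})\geq 0$ by the essential minimal pair hypothesis and $\delta(\f{D}_i\cap\f{F}/\f{A})\geq 0$ since $\f{A\leq D}_i$, so $\delta(\f{F/A})\geq 0$ by additivity ($(3)$ of Fact~\ref{lem:MonoRelRankOvBase}); (4) conclude $\f{B}=\f{D}_{c-1}$ works, since $\delta(\f{B/A})=1-(c-1)/c=1/c$ and $\f{A\leq B}$.

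Alternatively — and perhaps more cleanly — I could simply invoke Lemma~\ref{lem:FinStrsInRatCase} as a black box: write $\delta(\f{A})=j/c$, set $k=j+1$, and apply that lemma to $\f{B}':=\f{A}$ to obtain $\f{D}\in\Kfin$ with $\f{D\supseteq A}$, $\delta(\f{D})=(j+1)/c$, and such that every $\f{A'\leq A}$ with $\delta(\f{A'})\leq(j+1)/c$ satisfies $\f{A'\leq D}$; in particular $\f{A}$ itself (taking $\f{A'=A}$, noting $\delta(\f{A})=j/c\leq(j+1)/c$) is strong in $\f{D}$, so $\f{B}=\f{D}$ has $\delta(\f{B/A})=1/c$ as required. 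This reduces the whole lemma to one citation plus the trivial arithmetic observation that $\delta(\f{A})\leq\delta(\f{A})+1/c$.

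I do not anticipate a genuine obstacle here; this is a short technical lemma whose entire content is already packaged in Theorem~\ref{lem:InfMinPa} and Lemma~\ref{lem:FinStrsInRatCase}. The only point requiring a moment's care is confirming that the relative rank arithmetic lands exactly on $1/c$ rather than some other multiple of $1/c$ — i.e. that starting from an overshoot of $1$ and applying precisely $c-1$ rank-drop steps of size $1/c$ yields $1-(c-1)/c=1/c>0$, so we never need $\delta$ to become nonpositive and the hypotheses of Theorem~\ref{lem:InfMinPa} remain available throughout the construction.
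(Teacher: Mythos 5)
Your proposal is correct and takes essentially the same route as the paper, whose entire proof is your ``alternative'' version: free-join $\f{A}$ with a single new point and then apply Lemma \ref{lem:FinStrsInRatCase} with $k=c\delta(\f{A})+1$, your first argument merely unpacking that lemma's essential-minimal-pair chain by hand. (One trivial quibble: $\f{A\leq D}_0$ is most directly given by Fact \ref{lem:FullAmalg1} rather than $(2)$ of Fact \ref{lem:MonoRelRankOvBase}, though the conclusion is immediate either way.)
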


\begin{proof}
	Consider the structure given by $\f{A}^*=\f{A}\oplus{\f{A}_0}$ where $\f{A}_0\in\Kfin$ consists of a single point. Now an application of Lemma \ref{lem:FinStrsInRatCase} to $\f{A^*}$ yields the required result.
\end{proof}

We begin by studying basic types such that $d(p/{A})= 0, 1/c$ where ${A\leq \mathbb{M}}$ is finite. The choice to restrict our attention to such types will be justified by Theorem \ref{lem:2/cHasPrWrGEQ2}, where we show any type $p$ with $d(p/A)\geq 2/c$ \textit{cannot} be regular. We begin our analysis of types that can be regular types by defining nuggets and nugget-like types.  

\begin{defn}\label{defn:nugget}
	Let $\f{A,D}\in\Kfin$ with $\f{A\subsetneq{D}}$ with $D=AB$. Let $k\in \omega$. We say that ${B}$ is a $k/c$-\textit{nugget over} $\f{A}$ if $A\cap B=\emptyset$, $\delta({B/A})=k/c$ and $\delta({B'/A})>k/c$ for all ${A\subsetneq{AB'}\subsetneq AB}$.
\end{defn}

%\begin{remark}\label{rmk:ExistenceOfNuggets}
%	We note that we may find a $1/c$ nugget over any $\f{A}$ in $\Kfin$. Simply choose a $\f{B}$ as in Lemma \ref{lem:ExistenceStngExt} and take a $\subseteq$ minimal ${D}$ with $\f{A\subseteq D \subseteq B}$ that satisfies $\delta(\f{D/A})=1/c$. 
%\end{remark}

\begin{defn}\label{defn:nuggetlike-type}
	Let $A\leq\mathbb{M}$ be finite. We say that a basic type $p\in{S({A})}$ is \textit{nugget-like} over ${A}$, if given ${B}$ where ${B}$ realizes the quantifier free type of $p$ over $\f{A}$, then ${B}$ is a $k/c$-nugget over ${A}$ for some $k\in\omega$. 
\end{defn}

\begin{lemma}\label{lem:IntersecOfNuggets}
	Let $A\leq \mathbb{M}$ be finite and let $p\in S(A)$ be nugget-like. Let $A\subseteq X$ with $X$ closed. For any $\z{b}\models p$, either $\z{b}\cap{X}=\emptyset$ or $\z{b}\subseteq {X}$ . 
\end{lemma}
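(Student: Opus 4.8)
The plan is to unwind the definitions and argue by contradiction using the defining property of a nugget together with the closure of $X$. Suppose $\z{b}\models p$ with $\z{b}\cap X\neq\emptyset$ but $\z{b}\not\subseteq X$. Write $B$ for the structure realizing the quantifier-free type of $p$ over $A$, identified with $\z{b}A$; since $p$ is nugget-like, $B$ (more precisely, the part of $B$ off $A$) is a $k/c$-nugget over $A$ for some $k\in\omega$. Set $\z{b}_0=\z{b}\cap X$; by hypothesis $\z{b}_0\neq\emptyset$ and $\z{b}_0\neq\z{b}$, so $A\subsetneq A\z{b}_0\subsetneq A\z{b}$. The nugget property then forces $\delta(\z{b}_0/A)>k/c$.

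Next I would exploit that $A\leq\mathbb{M}$ and that $\z{b}A=AB\leq\mathbb{M}$ (as $p$ is basic), hence $d(\z{b}/A)=\delta(\z{b}/A)=k/c$. The idea is to compare $\delta(\z{b}_0/A)$ against $d(\z{b}_0/A)$. Since $A$ is closed and $\z{b}_0\subseteq X$ with $X$ closed, the intrinsic closure $\icl(A\z{b}_0)$ is contained in $X$; but I claim it is in fact contained in $A\z{b}$. Indeed, any minimal pair $(\f{E},\f{F})$ with $\f{E}\finsubset A\z{b}_0$ and $\f{F}\subseteq\mathbb{M}$ must have $\f{F}\subseteq A\z{b}$: since $A\z{b}\leq\mathbb{M}$, closure inside $\mathbb{M}$ of a subset of $A\z{b}$ stays inside $A\z{b}$ (Remark~\ref{rmk:Closed=StrongForFinite} applied to $\f{A\z{b}}$). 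Therefore $\icl(A\z{b}_0)\subseteq A\z{b}$, and combined with $\icl(A\z{b}_0)\subseteq X$ we could even take $\icl(A\z{b}_0)\subseteq A\z{b}_0$ since $A\z{b}\cap X = A\z{b}_0$ — wait, that needs $\z{b}\cap X=\z{b}_0$, which is exactly our setup. So $A\z{b}_0$ is closed, i.e. $A\z{b}_0\leq\mathbb{M}$, giving $d(\z{b}_0/A)=\delta(\z{b}_0/A)>k/c$.

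Finally I would derive the contradiction from monotonicity of relative dimension: $d(\z{b}_0/A)\leq d(\z{b}/A)$ because $d(\z{b}/A)=d(\z{b}_0/A)+d(\z{b}\setminus\z{b}_0/A\z{b}_0)$ and the second term is $\geq 0$ (Remark~\ref{rmk:propOfd}). Hence $k/c<d(\z{b}_0/A)\leq d(\z{b}/A)=k/c$, a contradiction. Therefore either $\z{b}\cap X=\emptyset$ or $\z{b}\subseteq X$.

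The main obstacle I anticipate is the bookkeeping in the middle step: carefully justifying that $A\z{b}_0$ is closed in $\mathbb{M}$. The subtlety is that $X$ being closed gives $\icl(A\z{b}_0)\subseteq X$, and $A\z{b}\leq\mathbb{M}$ gives $\icl(A\z{b}_0)\subseteq A\z{b}$; one then needs the set-theoretic fact $A\z{b}\cap X=A\z{b}_0$ (immediate from $\z{b}\cap X=\z{b}_0$ and $A\subseteq X$), so $\icl(A\z{b}_0)\subseteq A\z{b}_0$ and $A\z{b}_0$ is closed. Everything else is a routine application of the additivity and nonnegativity of $d$ together with the nugget inequality $\delta(\z{b}_0/A)>k/c$; one should double-check the degenerate cases $k=0$ and $\z{b}_0$ a single element, but these cause no trouble.
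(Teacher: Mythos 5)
Your proof is correct, and it rests on the same key inequality as the paper's: once $\emptyset\neq\z{b}_0=\z{b}\cap X\subsetneq\z{b}$, the nugget property gives $\delta(\z{b}_0/A)>k/c=\delta(\z{b}/A)$. Where you diverge is in how the contradiction with closedness is extracted. The paper stays at the level of minimal pairs: since $\delta(A\z{b})<\delta(A\z{b}_0)$, there is a minimal pair $(A\z{b}_0,D)$ with $D\subseteq A\z{b}$, and any such $D$ properly contains $A\z{b}_0$, hence meets $\z{b}\setminus\z{b}_0\subseteq\mathbb{M}\setminus X$; this violates Definition~\ref{defn:ClosedSet} for $X$ directly. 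You instead show that $A\z{b}_0=A\z{b}\cap X$ is closed — which is immediate from the fact, noted after Definition~\ref{defn:ClosedSet}, that intersections of closed sets are closed (your $\icl$ bookkeeping amounts to exactly this) — and then contradict the nugget inequality via additivity and nonnegativity of $d$. Both routes are sound; yours is arguably a touch cleaner, and can be shortened further: once $A\z{b}_0\leq\mathbb{M}$ you have in particular $A\z{b}_0\leq A\z{b}$, so $\delta(\z{b}_0/A)\leq\delta(\z{b}/A)=k/c$ immediately, with no need to pass through the $d$-function at all. The degenerate cases you flag ($k=0$, $\z{b}_0$ a singleton) indeed cause no trouble.
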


\begin{proof}
	Assume that $\z{b}\cap{X}\neq \emptyset$. Let $\z{b}'=\z{b}\cap{X} $ assume that $\z{b}'\neq \z{b}$. Then as $\delta(\z{b}'/A)>\delta(\z{b}/A)$, it follows that there is some minimal pair $(A\z{b}',D)$ with $D\subseteq A\z{b}$ but $D\nsubseteq X$. But this contradicts that $X$ is closed. Hence $\z{b}\subseteq X$.    
\end{proof}

We now explore how the behavior of the $d$ function interacts with nugget-like types. The following results are well known (see e.g. Theorem 3.28 of \cite{BdSh} or Lemma 3.13 of \cite{VY} and Lemma 2.6 of \cite{BdShelah}). 

\begin{lemma}\label{prop:dMonoBase}\label{lem:forkingDropsDimensions}\begin{enumerate}
		\item Suppose ${B}$ is finite and ${X\subseteq Y}$. Then $d({B/X})\geq{d({B/Y})}$.
		\item Let $A\leq\mathbb{M}$ be finite and let $p\in{S({A})}$. Suppose that for some $k\in\omega$, $d(p/{A})=k/c$. Let ${A\subseteq X\leq \mathbb{M}}$. Suppose that $q\in S({X})$ extends $p$. If $d(q/{X})<d(p/{A})$, then $q$ is a forking extension of $p$.
	\end{enumerate}
	
\end{lemma}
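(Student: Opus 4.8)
The plan is to prove the two items in order, with (1) used inside the proof of (2).

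\medskip
\noindent\textit{Proof of (1).} First I would reduce to the case that $X$ and $Y$ are both finite. If $X\subseteq Y$ are both infinite, then every finite $X_0\finsubset X$ is also a finite subset of $Y$, so the infimum defining $d(B/Y)$ is over a larger family and hence $d(B/Y)\leq d(B/X)$; and if exactly one of $X,Y$ is finite one replaces a finite approximation of the infinite base by a finite superset that is intrinsically closed (finite, by finite closures, and contained in the closed set) to reduce to the finite--finite case. For finite $X\subseteq Y$ and finite $B$, I would apply flatness, i.e. item (9) of Theorem \ref{thm:ForkOvClsdSets} with the two finite closed sets $U=\icl(BX)$ and $V=\icl(Y)$, which gives $d(U\cup V)+d(U\cap V)\leq d(U)+d(V)$. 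Since $X\subseteq Y$ we have $BX\cup Y=BY\subseteq U\cup V$, so $d(U\cup V)\geq d(BY)$ by monotonicity of $d$; since $X\subseteq U\cap V$ we have $d(U\cap V)\geq d(X)$; and $d(U)=\delta(\icl(BX))=d(BX)$, $d(V)=d(Y)$ by Remark \ref{rmk:propOfd}. Hence $d(BY)+d(X)\leq d(BX)+d(Y)$, which rearranges to $d(B/Y)\leq d(B/X)$.

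\medskip
\noindent\textit{Proof of (2).} By item (1), $d(q/X)\leq d(\z b/A)=d(p/A)$ for any $\z b\models q$, so it suffices to show that $d(q/X)<d(p/A)$ forces $q$ to fork over $A$. Because $\delta$ is $\tfrac1c\mathbb{Z}_{\geq 0}$-valued, the infimum defining $d(q/X)=d(\z b/X)$ is attained: there is a finite $\f{X}_0$ with $A\subseteq\f{X}_0\subseteq X$ and $\f{X}_0\leq\mathbb{M}$ (take any finite witness, adjoin $A$, and close) such that $d(\z b/\f{X}_0)=d(q/X)$. By monotonicity of forking, it is enough to show $d(\z b/\f{X}_0)<d(p/A)\Rightarrow\z b\forks_A\f{X}_0$, so I will prove the contrapositive: assume $\z b\dnf_A\f{X}_0$ and deduce $d(\z b/\f{X}_0)=d(p/A)$. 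Using base-monotonicity of forking (to keep the base $A$), the fact that a nonforking extension adds no algebraicity (so independent algebraically closed sets meet in the algebraic closure of the base), and item (8) of Theorem \ref{thm:ForkOvClsdSets}, the hypothesis $\z b\dnf_A\f{X}_0$ yields that $\acl(\z bA)$ and $\f{X}_0$ are freely joined over $A$, that $\acl(\z bA)\cap\f{X}_0=A$, and that $\acl(\z bA)\cup\f{X}_0$ is closed. Therefore $\icl(\z b\f{X}_0)=\acl(\z b\f{X}_0)=\acl(\z bA)\cup\f{X}_0$ is the free join of $\acl(\z bA)$ and $\f{X}_0$ over $A$ in the sense of Definition \ref{defn:BasicDel2}, so by additivity of $\delta$ over free joins (item (2) of Fact \ref{lem:MonoRelRankOvBase}) we get $\delta(\acl(\z bA)\cup\f{X}_0)=\delta(\acl(\z bA))+\delta(\f{X}_0)-\delta(A)$. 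Subtracting $\delta(\f{X}_0)$ gives $d(\z b/\f{X}_0)=\delta(\icl(\z b\f{X}_0))-\delta(\f{X}_0)=\delta(\acl(\z bA))-\delta(A)=d(\z b/A)=d(p/A)$, as required.

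\medskip
The main obstacle is the step in (2) that converts model-theoretic nonforking of $q$ over $A$ into the purely combinatorial statement that $\acl(\z bA)$ and $\f{X}_0$ form a free join meeting in exactly $A$; most of this is packaged in item (8) of Theorem \ref{thm:ForkOvClsdSets}, but care is needed to (i) pass to a \emph{closed} finite base $\f{X}_0\subseteq X$ that actually attains $d(q/X)$, (ii) use base-monotonicity of forking so that the base stays $A$, and (iii) rule out $\acl(\z bA)\cap\f{X}_0\supsetneq A$, which is exactly where the no-new-algebraicity property of nonforking extensions enters. Item (1), by contrast, is a routine application of flatness once $d$ is rewritten via $d(\cdot)=\delta(\icl(\cdot))$.
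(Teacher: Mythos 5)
Your proof is correct. Note that the paper itself gives no proof of this lemma --- it is stated as ``well known'' with citations to external sources (Theorem 3.28 of Baldwin--Shi, Lemma 3.13 of Verbovskiy--Yoneda, Lemma 2.6 of Baldwin--Shelah) --- so there is no internal argument to compare against; what you have produced is a self-contained reconstruction from the tools the paper does provide. For (1), your use of flatness with $|I|=2$ is exactly submodularity of $d$ on finite closed sets, and together with $d(\cdot)=\delta(\icl(\cdot))$ and monotonicity of $d$ it yields $d(BY)+d(X)\leq d(BX)+d(Y)$ as you say; the reduction to finite bases via the minimum in the definition of $d(\cdot/X)$ is also fine. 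For (2), the chain ``nonforking $\Rightarrow$ $\acl(\z{b}A)\cap X_0=A$ (no new algebraicity over the algebraically closed base $A$) $\Rightarrow$ hypotheses of item (8) are met $\Rightarrow$ $\acl(\z{b}A)$ and $X_0$ are freely joined with closed union $\Rightarrow$ $\delta$-additivity gives $d(\z{b}/X_0)=d(\z{b}/A)$'' is sound, and you correctly handle the passage to a finite closed $X_0\subseteq X$ containing $A$ that attains the minimum (using part (1) to see the value does not increase after closing). Two small remarks: what you invoke to pass from $\z{b}\dnf_A X$ to $\z{b}\dnf_A X_0$ is monotonicity of nonforking in the right-hand argument, not base-monotonicity; and the paper's Definition \ref{defn:RelDimFunc} contains a typo ($d(B/A)=d(AB)-d(B)$ rather than $d(AB)-d(A)$), which you have implicitly and correctly repaired in line with Remark \ref{rmk:propOfd}.
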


We now obtain the following fact about the forking of nugget-like types:

\begin{lemma}\label{lem:CharOfForking1}
	Let $A\leq\mathbb{M}$  be finite and let $p\in S({A})$ is nugget-like. Let ${A\subseteq Y\subseteq \mathbb{M}}$ with $Y$ closed. Let $q$ be an extension of $p$ to $Y$. Now $q$ is a forking extension of $p$ if and only if $d(q/{Y})<d(p/{A})$ or given $\z{b}\models q$, $\z{b}\subseteq Y$. 
\end{lemma}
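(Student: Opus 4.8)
The plan is to prove both directions by tracking what happens to the intrinsic closure of $A\z{b}$ when we look at it over $Y$ rather than over $A$. Throughout, let $\z{b}\models q$ and note that since $p$ is nugget-like, for any $\z{b}\models p$ the set $B=\z{b}$ is a $k/c$-nugget over $A$ where $k/c=d(p/A)=\delta(\z{b}/A)$ (here $A\z{b}$ is already closed because $p$ is basic, so $d(p/A)=\delta(\z{b}/A)$). The key structural input is Lemma~\ref{lem:IntersecOfNuggets}: if $A\subseteq X$ is closed then $\z{b}\cap X$ is either empty or all of $\z{b}$. We will apply this to closed sets built from $Y$ together with realizations of $p$.

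For the ``if'' direction, suppose first that $d(q/Y)<d(p/A)$. Then $q$ is a forking extension of $p$ directly by part~(2) of Lemma~\ref{lem:forkingDropsDimensions} (with $X=Y$), since $Y$ is closed. Next suppose $\z{b}\subseteq Y$. Then $q$ is algebraic (it is realized inside the parameter set $Y$), hence certainly forks over $A$ as $p$ is non-algebraic by the definition of a basic type. This disposes of the easy direction.

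For the ``only if'' direction, assume $q$ does \emph{not} fork over $p$ and that $\z{b}\not\subseteq Y$; we must show $d(q/Y)=d(p/A)$. By part~(1) of Lemma~\ref{prop:dMonoBase} we always have $d(q/Y)=d(\z{b}/Y)\leq d(\z{b}/A)=d(p/A)$, so it suffices to rule out strict inequality, which by Lemma~\ref{lem:forkingDropsDimensions}(2) would force forking anyway — so in fact nonforking gives $d(q/Y)=d(p/A)$ immediately once we know $q$ is non-algebraic, which is where $\z{b}\not\subseteq Y$ is used (if $\z{b}\subseteq Y$ then $d(q/Y)=0$ and, when $k\geq 1$, this is a forking extension; the content of the lemma is precisely to separate the ``drop in $d$'' cause from the ``collapse into $Y$'' cause). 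The remaining point, and the one requiring actual work, is to check that these two cases on the right-hand side are exhaustive and mutually compatible with the characterization of forking over closed sets from $(8)$ of Theorem~\ref{thm:ForkOvClsdSets1}: $\z{b}\dnf_A Y$ iff $\icl(A\z{b})\cap\icl(Y)=\icl(A)=A$ and $A\z{b},Y$ are freely joined over $A$ (together with the dimension equality $d(\z{b}/A)=d(\z{b}/Y)$). Using Lemma~\ref{lem:IntersecOfNuggets} applied to $X=\icl(Y\z{b})$ and to $X=Y$, one shows that if $\z{b}\not\subseteq Y$ and $d(\z{b}/Y)=d(\z{b}/A)$ then $A\z{b}$ and $Y$ are in fact freely joined over $A$ with $A\z{b}\cap Y=A$ — the nugget condition forces any new relation between $\z{b}$ and $Y\setminus A$, or any element of $\z{b}$ landing in $Y$, to create a proper subset $A\z{b}'\subsetneq A\z{b}$ with $\delta(\z{b}'/A)\leq k/c$, contradicting minimality of the nugget. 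Hence nonforking holds, completing the contrapositive.

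\textbf{Main obstacle.} The delicate step is the last one: extracting free-jointedness and $A\z{b}\cap Y = A$ from the mere numerical equality $d(\z{b}/Y)=d(\z{b}/A)$ using only the nugget hypothesis, i.e.\ showing that the nugget minimality condition $\delta(\z{b}'/A)>k/c$ for all $A\subsetneq A\z{b}'\subsetneq A\z{b}$ upgrades ``no drop in $d$'' to the full geometric nonforking criterion of Theorem~\ref{thm:ForkOvClsdSets1}$(8)$. I expect this to go through by a careful case analysis on $\icl(Y\z{b})$ via Lemma~\ref{lem:IntersecOfNuggets}, but it is the one place where a genuine computation with $\delta$ and closures is unavoidable.
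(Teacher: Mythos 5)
Your ``if'' direction is fine and matches the paper's: a drop in $d$ forces forking by Lemma~\ref{lem:forkingDropsDimensions}(2), and $\z{b}\subseteq Y$ makes $q$ algebraic over the closed set $Y$ while $p$ is non-algebraic. The problem is the ``only if'' direction, which is exactly where the content of the lemma lies and which you leave as a sketch. First, a bookkeeping slip: you begin that direction by assuming ``$q$ does not fork and $\z{b}\not\subseteq Y$'' and deducing ``$d(q/Y)=d(p/A)$'' --- but that is just the contrapositive of the first clause of the \emph{if} direction again; the statement you actually owe is: if $d(q/Y)=d(p/A)$ and $\z{b}\not\subseteq Y$, then $q$ does not fork (equivalently, and this is how the paper runs it: if $q$ forks and $d(q/Y)=d(p/A)$, then $\z{b}\subseteq Y$). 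You do eventually name this implication, but you defer it entirely to the ``main obstacle'' paragraph and do not prove it.

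The sketch you give for closing that obstacle is misdirected on two counts. (a) You aim to verify that $A\z{b}$ and $Y$ are \emph{freely joined} over $A$, which is the hardest of the three equivalent formulations in $(8)$ of Theorem~\ref{thm:ForkOvClsdSets}; the third formulation only asks for the $d$-equality (which you have by hypothesis, and which passes down to all finite $X_0\subseteq A\z{b}$, $Y_0\subseteq Y$ by additivity and monotonicity of $d$) together with $\acl(X_0A)\cap\acl(Y_0A)=A$. (b) The mechanism you propose --- that a new relation between $\z{b}$ and $Y\setminus A$ ``creates a proper subset $A\z{b}'\subsetneq A\z{b}$ with $\delta(\z{b}'/A)\leq k/c$'' --- is not right: a cross-relation does not produce a subset of $A\z{b}$ at all; it lowers $\delta(\z{b}/AY_0)$, which is governed by the $d$-drop clause, not by nugget minimality. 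In fact no $\delta$-computation is needed. The paper argues directly: given forking with no drop in $d$, criterion $(8)$ forces $\icl(A\z{b})\cap\icl(Y)\supsetneq\icl(A)$; since $p$ is basic these closures are $A\z{b}$, $Y$, $A$ respectively, so $\z{b}\cap Y\neq\emptyset$, and Lemma~\ref{lem:IntersecOfNuggets} applied to $X=Y$ immediately upgrades this to $\z{b}\subseteq Y$. (In your contrapositive framing the same two ingredients give $\z{b}\cap Y=\emptyset$, hence $A\z{b}\cap Y=A$ and the acl-intersection clause, hence non-forking.) So the single application of Lemma~\ref{lem:IntersecOfNuggets} to $X=Y$ is the whole argument; the ``genuine computation with $\delta$ and closures'' you anticipate does not exist, and as written your proof of the substantive direction is incomplete.
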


\begin{proof}
	If $d(q/{Y})<d(p/{A})$, then Lemma \ref{lem:forkingDropsDimensions} tells us that $q$ is a forking extension of $p$. Further $Y$ is algebraically closed. So if for any $\z{b}\models q$, $\z{b}\subseteq Y$, it follows that $b$ is an algebraic type over $Y$. Since $p$ is not an algebraic type over $A$, it follows that $q$ is a forking extension of $p$. 
	
	For the converse assume that $q$ is a forking extension of $p$ and that $d(q/{Y})=d(p/{A})$. As $q$ is a forking extension of $p$, it follows from $(8)$ of Theorem \ref{thm:ForkOvClsdSets} that $\icl(\z{b}A)\cap\icl({Y})\supsetneq \icl(A)$. But  $\icl(A)=A$, $\icl(Y)=Y$ and as $\z{b}$ realizes $p$ over $A$, $\icl(\z{b}A)=\z{b}A$. Thus $\z{b}\cap{Y}\neq\emptyset$.  Now by Lemma \ref{lem:IntersecOfNuggets}, $\z{b}\subseteq Y$. 
\end{proof}

The following theorem allows us to identify certain regular types. Further it establishes that $0$-nuggets are, in some sense, orthogonal to almost all other types.

\begin{thm}\label{thm:SomeNuggetsAreRegular}
	Let $A\leq\mathbb{M}$ be finite and let $p\in S({A})$ be nugget-like. Now if $d(p/{A})=0$ or $d(p/{A})=1/c$, then $p$ is regular. Further if $d(p/A)=0$, then $p$ is orthogonal to any other nugget-like type over $A$.  
\end{thm}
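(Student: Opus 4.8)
The plan is to use Lemma~\ref{lem:CharOfForking1} to reduce regularity to a combinatorial statement about how nugget-like types interact with closed sets, and then to verify the defining conditions of a regular type directly. Recall that a stationary type $p$ over $A$ is regular if for every $A\subseteq Y$ with $Y$ closed and every realization $\z{b}\models p$ with $\z{b}$ forking with $Y$ over $A$, and for any realization $\z{c}\models p\vert_Y$ that does not fork with $Y$ over $A$, we have $\z{b}\dnf_A\z{c}$; equivalently, $p$ is not regular iff there is a closed $Y\supseteq A$ and realizations $\z{b},\z{c}$ of $p$ with $\z{b}\forks_A Y$ but $\z{c}\dnf_A Y$ and $\z{c}\forks_A\z{b}Y$... more practically I would use the equivalent characterization via non-orthogonality of $p$ to its own forking extensions. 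First I would fix $A\subseteq Y\subseteq\mathbb{M}$ with $Y$ closed and consider a forking extension $q$ of $p$ to $Y$; by Lemma~\ref{lem:CharOfForking1} either $d(q/Y)<d(p/A)$ or every $\z{b}\models q$ lies inside $Y$. In the case $d(p/A)=0$, the first alternative is impossible since $d(q/Y)\ge 0$, so any forking extension is realized inside $Y$, i.e. is algebraic; hence $p$ has no non-algebraic forking extension to any closed set, which (together with stationarity of basic types from Remark~\ref{rmk:SettingNFOmgSt}) forces $p$ to be regular — indeed any type with $U$-rank such that all forking extensions are algebraic is trivially regular.

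For the case $d(p/A)=1/c$, the forking extensions $q$ to a closed $Y$ are of two kinds: those with $d(q/Y)=0$ (and $\z{b}\not\subseteq Y$), and the algebraic ones with $\z{b}\subseteq Y$. I would argue that $p$ is regular by checking the standard criterion: if $\z{b}\models p$ and $\z{c}\models p$ with $\z{c}\dnf_A\z{b}$ failing, then in fact $\z{b}\in\acl(A\z{c})$ (so the dependence is "as strong as possible"). Concretely, suppose $\z{c}\models p$, $Y=\acl(A\z{c})=A\z{c}$, and $\z{b}\models p$ with $\z{b}\forks_A Y$ and $d(q/Y)=0$ where $q=\tp(\z{b}/Y)$; I want to derive $\z{b}\subseteq Y$, contradicting the choice $d(q/Y)=0$, which would show the only forking extensions are algebraic over any closure of a single realization — then a short argument with the characterization of forking (part (8) of Theorem~\ref{thm:ForkOvClsdSets}) and flatness (part (9)) yields regularity. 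The key computation is: if $d(\z{b}/A\z{c})=0$ but $\z{b}\not\subseteq A\z{c}$, then $\icl(\z{b}\z{c}A)$ properly contains $A\z{c}$ and has $\delta$ value equal to $\delta(A\z{c})$, and since $\delta(\z{b}/A)=1/c$ this forces a minimal pair configuration on $\z{b}$ that contradicts $\z{b}$ being a $k/c$-nugget over $A$ (using Lemma~\ref{lem:IntersecOfNuggets} to control $\z{b}\cap\z{c}$ and the nugget condition $\delta(B'/A)>k/c$ for proper nonempty subsets).

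Finally, for the orthogonality statement when $d(p/A)=0$: let $p'\in S(A)$ be another nugget-like type; I would show $p\orth p'$ by showing that for realizations $\z{b}\models p$, $\z{b}'\models p'$ we always have $\z{b}\dnf_A\z{b}'$, i.e. $\z{b}$ never forks with $\z{b}'$ over $A$. By Lemma~\ref{lem:CharOfForking1} applied with $Y=\acl(A\z{b}')=A\z{b}'$ (closed since $p'$ basic), $\tp(\z{b}/Y)$ forks over $A$ only if $d(\z{b}/A\z{b}')<d(p/A)=0$ (impossible) or $\z{b}\subseteq A\z{b}'$; and $\z{b}\subseteq A\z{b}'$ with $\z{b}\cap A=\emptyset$ would force $\z{b}\subseteq\z{b}'$, whence by the nugget condition on $\z{b}'$ (all proper nonempty subsets over $A$ have strictly larger relative rank) together with $\delta(\z{b}/A)=0\le\delta(\z{b}'/A)$... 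I would push this to a contradiction unless $p=p'$ as quantifier-free types over $A$, and handle that degenerate case separately (there $\z{b}=\z{b}'$ is possible but then non-orthogonality is not claimed since they are then the "same" nugget — I should state the result as orthogonality to nugget-like types \emph{not} realizing the same quantifier-free type, or observe that even equal-type realizations are independent because $d(p/A)=0$ means realizations of $p$ are pairwise $A$-independent). The main obstacle I anticipate is the $d(p/A)=1/c$ regularity argument: transferring "every forking extension to a closed set is either algebraic or drops dimension to $0$" into the formal definition of regularity requires knowing that such types behave like rank-one types, and making precise that a dimension-$0$ but non-algebraic forking extension cannot cause a failure of regularity — this is where I would lean hardest on the flatness inequality (part (9) of Theorem~\ref{thm:ForkOvClsdSets}) and the submodularity of $d$, possibly needing the essential minimal pair machinery of Theorem~\ref{lem:InfMinPa} to rule out pathological amalgamation diagrams.
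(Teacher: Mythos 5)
Your treatment of the case $d(p/A)=0$ is essentially the paper's argument: by Lemma \ref{lem:CharOfForking1} every forking extension to a closed set is realized inside that set, hence algebraic, and a non-algebraic stationary type all of whose forking extensions are algebraic is regular. The orthogonality claim for $0$-nuggets is also on the right track, though as written you only check independence of realizations over $A$ itself; orthogonality requires checking realizations of the nonforking extensions over an arbitrary closed $X\supseteq A$. This goes through by the same computation (if $\z{b}X\cap\z{f}X\neq X$ then Lemma \ref{lem:IntersecOfNuggets} forces $\z{b}=\z{f}$, contradicting $p\neq q$; then $0=d(\z{b}/X)\geq d(\z{b}/X\z{f})\geq 0$ and part $(8)$ of Theorem \ref{thm:ForkOvClsdSets} give $\z{b}\dnf_X\z{f}$), but you should say so.

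The genuine gap is in the $d(p/A)=1/c$ case, and it is the one you flag yourself at the end. Your plan is to derive a contradiction from ``$d(q/Y)=0$ and $\z{f}\not\subseteq Y$,'' i.e., to show that $p$ has only algebraic forking extensions and so behaves like a rank-one type. This is false: taking $Y=\acl(A\z{c})$ for $\z{c}\models p$, the free join of two copies of the nugget over $A$ extended by an essential minimal pair (Theorem \ref{lem:InfMinPa}) yields, after a strong embedding, a realization $\z{b}\models p$ with $d(\z{b}/Y)=0$ and $\z{b}\cap Y=\emptyset$; this is precisely the configuration used in Theorem \ref{thm:DomEquivalence} to prove that distinct $1/c$-types are \emph{non}-orthogonal. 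The nugget condition constrains proper subsets of $\z{b}$ over $A$, not how $\z{b}$ sits relative to $Y$, so your ``key computation'' produces no minimal-pair contradiction. The correct route is not to rule this case out but to show such a forking extension is orthogonal to $p$: take $\z{b}\models p|_X$ (so $\z{b}\dnf_A X$, $X\z{b}$ is closed and $\z{b}\cap X=\emptyset$) and $\z{f}$ realizing the forking extension with $d(\z{f}/X)=0$ and $\z{f}\cap X=\emptyset$. Additivity and monotonicity of $d$ give $d(\z{b}/\acl(X\z{f}))=d(\acl(X\z{f})\,\z{b}/X)-d(\acl(X\z{f})/X)\geq d(\z{b}/X)-0=1/c$, hence $\z{b}\not\subseteq\acl(X\z{f})$, hence $\z{b}\cap\acl(X\z{f})=\emptyset$ by Lemma \ref{lem:IntersecOfNuggets}, and part $(8)$ of Theorem \ref{thm:ForkOvClsdSets} then yields $\z{b}\dnf_X\z{f}$. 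Without this step (or an equivalent substitute) the regularity of $1/c$-nuggets is not established by your proposal.
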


\begin{proof}
	Under the given conditions $p$ is clearly non-algebraic and stationary. We directly establish that it will be orthogonal to any forking extension of itself. Let ${A\subseteq{X}\subseteq \mathbb{M}}$ with $X$ closed. Since $\Sfin$ is $\omega$-stable and has finite closures we may as well assume that $X$ is finite, i.e. if $q\in S(X)$ with $q\supseteq p$ a forking extension, there is some finite closed $X_0\subseteq X$ such that $q\upharpoonright_{X_0}$ is a forking extension. Let $\z{b}\models p$. We have that $\z{b}\dnf_{A}X$. As $A\z{b}$, $X$ are closed and $A\z{b}\cap{X}=A$, from an application of $(8)$ of Theorem \ref{thm:ForkOvClsdSets}, we obtain that $X\z{b}$ is closed. 
	
	First assume that $d(p/{A})=0$. Let $p'$ be a forking extension of $p$ to ${X}$ and let$\z{f}\models p'$. It follows easily from Lemma \ref{prop:dMonoBase}, that  $d(\z{f}/A)\geq d(\z{f}/X)$. As $d(\z{f}/A)=0$ and $d(\z{f}/X)\geq 0$, it now follows that $d(\z{f}/X)=0$. Thus by Lemma \ref{lem:CharOfForking1}, we have that $\z{f}\subseteq X$ and hence $\z{b}\dnf_{X}\z{f}$ as $\z{b} \dnf_{A} X$.
	
	So assume that $d(p/{A})=1/c$. Let $p'$, $\z{f}$ be as above. By Lemma \ref{lem:CharOfForking1}, $d(p'/{X})=0$ or $\z{f}\subseteq{X}$. As above $\z{f}\subseteq X$ yields that $\z{b}\dnf_{X}\z{f}$. So assume that $\z{f}\nsubseteq X$ and note that by Lemma \ref{lem:IntersecOfNuggets} we have that $\z{f}\cap X=\emptyset$. Now by $(8)$ of Theorem \ref{thm:ForkOvClsdSets} it suffice to show that $X\z{b}\cap \acl(X\z{f})=X$ to establish that $\z{b}\dnf_{X}\z{f}$. Consider $d(\acl(X\z{f})\,\z{b}/X)$. On the one hand we have that $d(\acl(X\z{f})\,\z{b}/X)\geq d(\z{b}/X)=1/c$ (see Remark \ref{rmk:propOfd}). On the other hand $d(\acl(X\z{f})\,\z{b}/X)=d(\z{b}/acl(X\z{f}))+d(\acl(X\z{f})/X)$. As $d(\acl(X\z{f})/X)=d(\z{f}/X)=0$, we obtain that $d(\z{b}/\acl(X\z{f}))\geq 1/c$. In particular $\z{b}\nsubseteq\acl(X\z{f})$. But then by Lemma \ref{lem:IntersecOfNuggets}, $\z{b}\cap\acl(X\z{f})=\emptyset$ and thus $X\z{b}\cap\acl(X\z{f})=\emptyset$ as required. 
	
	For the second half of the claim, assume that $d(p/A)=0$. Let $q\in S(A)$ be nugget-like and distinct from $p$. Now $d(p/A)=d(p|_{X}/X)$ and $d(q/A)=d(q|_{X}/X)$. Let $\z{f}\models q|_{X}$. Note that $\z{f}\dnf_{A}X$ implies that $X\z{f}$ is closed. Now using Lemma \ref{lem:IntersecOfNuggets}, we can easily show that $\z{b}X\cap{\z{f}}X\neq X$, then $\z{b}=\z{f}$. But this contradicts $p\neq q$. Thus it follows that $\z{b}X\cap\z{f}X=X$. Further $0=d(\z{b}/X)\geq d(\z{b}/X\z{f})\geq 0$. Again by $(8)$ of Theorem \ref{thm:ForkOvClsdSets}, we obtain that $\z{b}\dnf_{X}\z{f}$  and thus $p,q$ are orthogonal.   
\end{proof}

The following theorem shows that while there are many regular types with $d(p/A)=1/c$, all such types are non-orthogonal. Thus up to non-orthogonality, there is only one regular type with $d(p/A)=1/c$. This is in contrast to distinct $0$-nuggets, any two of which are orthogonal to each other. We also show that the number of independent realizations of a $1/c$ nugget determines the dimension of a model.

\begin{thm}\label{thm:DomEquivalence}
	Let $A$ be closed and finite and let $p,q\in S(A)$ be distinct basic types and satisfy $d(p/A)=d(q/A)=1/c$. Then $p,q$ are non-orthogonal. Hence any two regular types over $p',q'\in S(X)$ where $X$ is closed and $d(p'/X)=d(q'/X)=1/c$ are non-orthogonal. Further if we take $A=\emptyset$ and let $\f{M\preccurlyeq\mathbb{M}}$. The dimension of $\f{M}$ is determined by the number of independent realizations of $p$ in $\f{M}$. Thus a single regular type determines the dimension of $\f{M}$.
\end{thm}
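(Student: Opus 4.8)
The plan is to establish non-orthogonality of $p$ and $q$ by exhibiting, over a suitable common closed set $X \supseteq A$, realizations $\z{b} \models p$ and $\z{f} \models q$ that are forking-dependent over $X$; since $p,q$ are regular (by Theorem \ref{thm:SomeNuggetsAreRegular}, as $d(p/A)=d(q/A)=1/c$), non-orthogonality of regular types is witnessed this way. First I would take $\z{b} \models p$ with $\z{b}A \leq \mathbb{M}$, and consider the amalgamation: by Lemma \ref{lem:DimWitness} applied to an appropriate free join of the $q$-nugget and the $p$-nugget over $A$ (both have relative rank $1/c$, so the symmetric conclusion $\delta(\f{H}/B)=0$ is available), I obtain a finite closed configuration containing independent copies of both nuggets and a realization $\z{f}$ of $q$ with $\z{b}A\z{f} \leq \mathbb{M}$ and $d(\z{f}/\z{b}A) = 0$. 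By Lemma \ref{lem:forkingDropsDimensions}(2) (or directly Lemma \ref{lem:CharOfForking1}), since $d(q/A) = 1/c$ drops to $0$ over $X := \z{b}A$, the type $\tp(\z{f}/X)$ is a forking extension of $q$; meanwhile $\z{b}$ realizes $p$, so $p \north q$.

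The second claim — non-orthogonality of any two regular types $p', q' \in S(X)$ with $d(p'/X) = d(q'/X) = 1/c$ over an arbitrary closed $X$ — should follow by the same amalgamation argument with $A$ replaced by a finite closed $X_0 \subseteq X$ over which the relevant forking is witnessed (using $\omega$-stability and finite closures to reduce to the finite case, exactly as in the proof of Theorem \ref{thm:SomeNuggetsAreRegular}), together with the general fact that non-orthogonality is preserved under passing to/from restrictions and nonforking extensions of regular types. Here I would want to note that every basic type with $d(p/A) = 1/c$ is, up to the analysis in Remark \ref{rmk:SettingNFOmgSt}, controlled by a $k/c$-nugget for suitable $k$; strictly one should check that a $1/c$-valued $d$ forces the generating structure to contain a nugget in the sense of Definition \ref{defn:nugget}, which is where the regularity input from Theorem \ref{thm:SomeNuggetsAreRegular} is actually applied.

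For the dimension statement, take $A = \emptyset$ and $\f{M} \preccurlyeq \mathbb{M}$. The plan is to show that a maximal $\dnf$-independent set of realizations of $p$ in $\f{M}$ has size equal to $c \cdot \dim(\f{M})$ (interpreting $\infty$ appropriately). One direction: if $\z{b}_1, \ldots, \z{b}_n \models p$ are independent over $\emptyset$ and all lie in $\f{M}$, then by repeated use of Fact \ref{lem:MonoRelRankOvBase}(2) and the freeness characterization of $\dnf$ in $(8)$ of Theorem \ref{thm:ForkOvClsdSets}, the closure $\f{B}$ of $\z{b}_1 \cdots \z{b}_n$ inside $\f{M}$ satisfies $\emptyset \leq \f{B} \leq \f{M}$ with $\delta(\f{B}) \geq n/c$, so $\dim(\f{M}) \geq n/c$. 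Conversely, given $\f{A} \leq \f{M}$ with $\delta(\f{A}) = \dim(\f{M}) = k/c$, I would extract $k$ independent realizations of $p$ inside (the closure of) $\f{A}$ by peeling off $1/c$-nuggets one at a time — at each stage, Lemma \ref{lem:ExistenceStngExt} and the essential-minimal-pair machinery of Theorem \ref{lem:InfMinPa} guarantee such a nugget exists strongly inside a structure of the right rank, and nonforking of each new nugget from the previous ones follows from the rank-additivity and free-join criterion. Finally, since $p$ is regular, its dimension in $\f{M}$ (the cardinality of a maximal independent set of realizations) is a well-defined invariant of $\f{M}$, and the two inequalities pin it to $c \cdot \dim(\f{M})$; combined with Theorem \ref{thm:CountableSpectrum} this shows the single regular type $p$ determines $\dim(\f{M})$.

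\emph{Main obstacle.} The delicate point is the converse direction of the dimension claim: extracting genuinely $\dnf$-independent realizations of the \emph{specific} type $p$ (not merely abstract nuggets of rank $1/c$) from an arbitrary strong substructure of rank $k/c$, and verifying that each successive extraction both stays inside $\f{M}$ (using that $\f{M}$ is closed and that strong embeddings over closed sets exist by the axioms of $\Sfin$) and remains independent from the earlier ones. Getting the bookkeeping right — that the realizations one peels off are mutually free over $\emptyset$ and that their joint closure recaptures all of $\delta(\f{A})$ — is where the real work lies; the non-orthogonality half is comparatively routine given Lemma \ref{lem:DimWitness} and Theorem \ref{thm:SomeNuggetsAreRegular}.
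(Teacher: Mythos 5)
Your non-orthogonality argument is essentially the paper's: both of you free-join the two configurations realizing $p$ and $q$ over $A$, use Lemma \ref{lem:FinStrsInRatCase} (via Lemma \ref{lem:DimWitness} in your case) to cap the result by a structure of rank $\delta(A)+1/c$, embed it strongly, and conclude forking from the drop in $d$; the reduction of the second claim to a common finite closed base via parallelism invariance is also the same. (Your worry about whether a basic type with $d=1/c$ is nugget-like is harmless here: the first claim needs no regularity at all, and the second claim assumes regularity of $p',q'$ as a hypothesis.)

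The dimension claim is where you have a genuine gap. You prove correctly that $n$ independent realizations of $p$ in $\f{M}$ force $\dim(\f{M})\geq n/c$, but for the other direction you propose to take a witness $\f{A}\leq\f{M}$ with $\delta(\f{A})=k/c$ and ``peel off'' $k$ independent realizations of $p$ \emph{from inside} $\f{A}$. This cannot work: $p$ is generated by a specific finite configuration $B$ with $\delta(B)=1/c$, and an arbitrary closed $\f{A}$ of rank $k/c$ need not contain a single copy of $B$ (take $\f{A}$ to be a free join of $k$ closed structures of rank $1/c$ that are not isomorphic to $B$ and contain no copy of it). The obstacle you flag at the end is real, but the resolution is not to do the bookkeeping more carefully --- it is to abandon the idea of locating the realizations inside $\f{A}$. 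The paper instead forms $C_n$, the free join over $\emptyset$ of $n$ copies of the quantifier-free type of $p$, notes $\delta(C_n/\emptyset)=n/c\leq\dim(\f{M}/\emptyset)$, and applies Lemma \ref{lem:DimAndStrngEm} to embed $C_n$ strongly into $\f{M}$; each copy of $B$ is then strong in $\f{M}$ by transitivity, hence realizes $p$ by $(6)$ of Theorem \ref{lem:CompleteTypesOverSets}, and the copies are pairwise freely joined with closed union, hence independent by $(8)$ of Theorem \ref{thm:ForkOvClsdSets}. With that substitution your proof closes; as written, the converse direction fails.
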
 

\begin{proof}
	Let $A$ be as given. Consider $A$ as a finite structure that lives in $\Kfin$. Now consider the finite structures $A{B}, A{C}$ where $B, C$ realize the quantifier free types of  $p,q$ respectively. Consider $D$, the free join of $AB,AC$ over $A$. Apply Lemma \ref{lem:FinStrsInRatCase} to obtain a finite  $G$ with $\delta(G/D)=-1/c$ and $A,AB,AC\leq G$. Let $f$ be a strong embedding of ${G}$ into $\mathbb{M}$ where $f$ is the identity on $A$. From $(6)$ of Theorem \ref{lem:CompleteTypesOverSets} and the transitivity of $\leq$ it follows that $f(B)\models p$ and $f(C)\models q$. Now from $(8)$ of Theorem \ref{thm:ForkOvClsdSets}, it follows that $f(B)\forks_{A}f(C)$ and thus $p\north q$. Now given $p',q'\in S(X)$, there exists a finite closed set, which by an abuse of notation we call $A$, such that $p',q'$ are based and stationary over $A$. Since regularity is parallelism invariant both $p|_{A}$ and $q|_{A}$ are regular. Arguing as above we see that $p'|_{A}\north q'|_{A}$ and thus they are non-orthogonal. 
	
	Let $\f{M}\preccurlyeq\mathbb{M}$ and assume that $A=\emptyset$. Given ${n\in\omega}$, consider the finite structure ${C}_n$ that is the free join of $n$-copies of the quantifier free type of $p$ over $\emptyset$. If $\dim(\f{M})\geq n/c$, by Lemma \ref{lem:DimAndStrngEm}, there is a strong embedding of $C_n$ into $\f{M}$. It is easily checked that the strong embedding witnesses $n$-independent realizations of $p$. The rest follows easily. 
\end{proof}

The following shows that $1/c$ nuggets are not locally modular.

\begin{thm}\label{thm:NuggetsAreNonLocMod}
	Let $A\leq \mathbb{M}$ be finite and let $p\in S(A)$ be a nugget-like with $d(p/A)=1/c$. Then $p$ is not locally modular, in particular it is non-trivial. 
\end{thm}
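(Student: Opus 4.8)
The plan is to show that a $1/c$-nugget-like type $p$ over $A$ fails local modularity by exhibiting a configuration that violates the local modularity equation for the associated pregeometry. Recall that for a regular type $p$, local modularity means that for any realizations $\z{b}_1,\ldots,\z{b}_n,\z{c}$ in the $p$-pregeometry over some small parameter set, $\z{c}$ depending on $\{\z{b}_i\}$ over a base $C$ implies $\z{c}$ depends on some two of them over $C$ — equivalently, the pregeometry is (after localizing at one point) modular. Since by Theorem \ref{thm:SomeNuggetsAreRegular} $p$ is regular and by Theorem \ref{thm:DomEquivalence} all $1/c$-types of this kind are non-orthogonal, it suffices to find one concrete witness. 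The natural approach is to build three independent realizations $\z{b}_0,\z{b}_1,\z{b}_2$ of $p$ together with a fourth realization $\z{e}$ that lies in $\acl$ of $\z{b}_0\z{b}_1\z{b}_2 A$ but is independent from any two of them over $A$; this is exactly a failure of modularity.

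\medskip

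First I would use Lemma \ref{lem:FinStrsInRatCase} (and the free-join facts, Fact \ref{lem:FullAmalg1} and Fact \ref{lem:MonoRelRankOvBase}) to construct, inside $\Kfin$, a finite structure $\f{G}$ containing $A$ and four copies $\f{B}_0,\f{B}_1,\f{B}_2,\f{E}$ of the quantifier-free type of $p$ over $A$, arranged so that: (i) $A\f{B}_i\leq \f{G}$ for each $i$ and $A\f{E}\leq\f{G}$, so that each realizes $p$ after strong embedding into $\mathbb{M}$ (using $(6)$ of Theorem \ref{lem:CompleteTypesOverSets} and transitivity of $\leq$, exactly as in the proof of Theorem \ref{thm:DomEquivalence}); (ii) the rank drops are arranged so that $\delta(\f{E}/A\f{B}_0\f{B}_1\f{B}_2)<0$, forcing $\f{E}\subseteq\icl(A\f{B}_0\f{B}_1\f{B}_2)$, hence (by $(4)$ of Theorem \ref{thm:QuantElim}) $\z{e}\in\acl(A\z{b}_0\z{b}_1\z{b}_2)$; and (iii) $A\f{B}_i\f{B}_j\f{E}$ is freely joined in the appropriate sense and $\delta(\f{E}/A\f{B}_i\f{B}_j)\geq 0$ for each pair, so that by $(8)$ of Theorem \ref{thm:ForkOvClsdSets} $\z{e}\dnf_A \z{b}_i\z{b}_j$ for every pair $i<j$, while $\z{b}_0,\z{b}_1,\z{b}_2$ are themselves mutually independent over $A$. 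The key numerical bookkeeping is that three copies of $p$ contribute total rank $3/c$ over $A$, and we want $\f{E}$ (rank $1/c$) to become intrinsic only when all three are present — so the single minimal-pair/rank-dropping extension supplied by Theorem \ref{lem:InfMinPa} applied to $A\f{B}_0\f{B}_1\f{B}_2\f{E}$ (or to a suitable substructure) must be chosen to sit over the full triple and not over any proper subtriple.

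\medskip

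Then I would transport this configuration into $\mathbb{M}$ via a strong embedding fixing $A$, obtaining $\z{b}_0,\z{b}_1,\z{b}_2,\z{e}$ all realizing $p$ over $A$, with $\{\z{b}_i\}$ independent over $A$, $\z{e}\in\acl(A\z{b}_0\z{b}_1\z{b}_2)\setminus\acl(A)$, and $\z{e}\dnf_A\z{b}_i\z{b}_j$ for each pair. Passing to the pregeometry of $p$ (over $A$, or over $\acl(A)$), $\z{e}$ is in the closure of $\{\z{b}_0,\z{b}_1,\z{b}_2\}$ but not in the closure of any two of them, which directly contradicts local modularity — if the geometry were locally modular, after localizing at one of the $\z{b}_i$ the geometry would be projective, and closure of a point under three others would already be witnessed by two. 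Finally, non-triviality is immediate: a locally modular-but-not-trivial dichotomy is not even needed here, since $p$ fails local modularity it is a fortiori non-trivial (triviality implies local modularity).

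\medskip

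The main obstacle will be step (ii)–(iii): getting the rank arithmetic in $\f{G}$ to simultaneously force $\f{E}$ to be intrinsic over the full triple $A\f{B}_0\f{B}_1\f{B}_2$ while keeping it strictly non-intrinsic (indeed free, in the sense of $(8)$ of Theorem \ref{thm:ForkOvClsdSets}) over every pair $A\f{B}_i\f{B}_j$, and doing all of this while staying inside $\Kfin$ so that Lemma \ref{lem:FinStrsInRatCase} and Fact \ref{lem:FullAmalg1} apply. This requires a careful choice of the essential minimal pair from Theorem \ref{lem:InfMinPa}: one wants the $-1/c$ rank drop to be "spread across" all three $\f{B}_i$'s, which should be arranged by taking the minimal pair over the free join of the three copies together with $\f{E}$ and verifying that no proper substructure already exhibits the drop — this is precisely where the essential-minimal-pair condition ($\delta(\f{D'}/\f{D'}\cap\f{B})\geq 0$ for proper $\f{D'}$) does the work. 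The rest of the argument is a direct application of the forking characterization $(8)$ of Theorem \ref{thm:ForkOvClsdSets} and the definition of local modularity.
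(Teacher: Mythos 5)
There is a genuine gap, and it is in the heart of the argument rather than in the bookkeeping. The configuration you propose to build --- three $A$-independent realizations $\z{b}_0,\z{b}_1,\z{b}_2$ of $p$ together with a fourth realization $\z{e}\in\pcl(\z{b}_0\z{b}_1\z{b}_2)$ with $\z{e}\notin\pcl(\z{b}_i\z{b}_j)$ for every pair --- does \emph{not} contradict local modularity. Such a configuration exists in any modular geometry of dimension at least $3$: in a projective plane over an infinite field, a generic point of the plane spanned by three independent points lies on none of the three lines through pairs of them. Your closing inference (``closure of a point under three others would already be witnessed by two'' in a projective geometry) is false, so even if the rank arithmetic in steps (ii)--(iii) can be arranged, the transported configuration only witnesses \emph{non-triviality} of $p$ (since $\z{e}\in\pcl(\z{b}_0\z{b}_1\z{b}_2)\setminus\bigcup_i\pcl(\z{b}_i)$), not failure of local modularity. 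Violating (local) modularity requires a failure of the dimension equation $\pdim(X\vee Y)+\pdim(X\wedge Y)=\pdim(X)+\pdim(Y)$ for two closed sets $X,Y$, which is a different and strictly stronger kind of witness.

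The paper's proof supplies exactly that witness: it takes two ``lines'' $X=\pcl(\z{c'}\,\z{b})$ and $Y=\pcl(\z{c}\,\z{b'})$ of $p$-dimension $2$ lying inside a ``plane'' $Z$ of $p$-dimension $3$, so that modularity would force $X\cap Y$ to contain a realization $\z{e}$ of $p$; it then rules out such an $\z{e}$ by estimating $d$ of the resulting six-point configuration in two ways, the upper bound coming from \emph{flatness} ($(9)$ of Theorem \ref{thm:ForkOvClsdSets}). Flatness is the essential tool here and it appears nowhere in your outline. If you want to keep your configuration, the honest salvage is the route of Remark \ref{rmk:non-localmodularity}: your construction (once the rank arithmetic is verified) proves non-triviality, and combining Hrushovski's theorem that a non-trivial locally modular regular type in a stable theory interprets a group with Wagner's result that these generics interpret no group yields non-local-modularity --- but that is a different argument from the one you wrote, and it should be stated as such rather than claimed to follow from the configuration directly.
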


\begin{proof}
	Recall that given a regular type $p$, the realizations of $p$ form a pregeometry with respect to forking closure. In order to simplify the presentation, we will let $A=\emptyset$. We let $p^\mathbb{M}$ denote the realizations of $p$ in $\mathbb{M}$, $\pcl$ denote the forking closure (or $p$-closure) of $p^\mathbb{M}$ and  $\pdim$ ($p$-dimension) denote the associated dimension.
	
	We begin with a proof that $p$ is non-trivial. Let $B_1,B_2,B_3$ be three finite structures that has the same quantifier free type as $p$ and are disjoint over $\emptyset$. Consider  ${C}= \oplus {B}_i$, the free join of the $B_i$ over $\emptyset$. Using Lemma \ref{lem:FinStrsInRatCase} we obtain a finite structure $D\in \Kfin$ with $\delta(D)=2/c$, $B_i\leq C$ and $B_i\oplus B_j\leq C$ for any $i\neq j$. Note that $C\nleq D$ as $\delta(C)>\delta(D)$. Let $g$ be a strong embedding of $C$ into $\mathbb{M}$. An argument similar to that found in Theorem \ref{thm:DomEquivalence} shows that $g(B_1), g(B_2), g(B_3)$ are pairwise independent but dependent realizations of $p$ and thus $p$ is non-trivial.

	We will now establish that $p$ is not modular. To show that $p$ is not locally modular, we can simply choose a realization $\z{h}$ of $p$ independent from the configuration used in the following argument  and relativize the argument over $\z{h}$. Fix realizations $\z{a},\z{b},\z{c}\models p$ such that they are pairwise independent but are dependent. As $\Sfin$ is stable we can find $\z{b'}\, ,\z{c'} \models p$ such that $\z{b'}\,\z{c'}\equiv_{\z{a}}\z{b}\,\z{c}$ and $\z{b'}\,\z{c'}\dnf_{\z{a}}\z{b}\,\z{c}$. Let $X=\pcl(\z{c'}\,\z{b})$, $Y=\pcl(\z{c}\,\z{b'})$. Let $Z=\pcl(X\cup Y)$. We will show that $\pdim(\pcl(Z))+\pdim(X\cap{Y})<\pdim(X)+\pdim(Y)$. As $\pdim(X)=2=\pdim(Y)$ and $\pdim(Z)=3$, it suffices to show that $X\cap Y \cap{p^\mathbb{M}}$ is empty.

	Suppose to the contrary that there is some $\z{e}\in X\cap{Y}\cap{p^\mathbb{M}}$. We obtain the following configuration: 
	\begin{center}\begin{tikzpicture}
		\draw (0,0) --(1,1);
		\draw (0,0) --(-1,1);
		\draw (1,1) --(2,2);
		\draw (-1,1) --(-2,2);
		\draw (-1,1) --(0,1.33);
		\draw (1,1) --(0,1.33);
		\draw (2,2) --(0,1.33);
		\draw (-2,2) --(0,1.33);
		\node[below] at (0,0) {$\z{a}$};
		\filldraw[black] (0,0) circle (1pt);
		\node[below right] at (1,1) {$\z{b'}$};
		\filldraw[black] (1,1) circle (1pt);
		\node[below left] at (-1,1) {$\z{b}$};
		\filldraw[black] (-1,1) circle (1pt);
		\node[left] at (-2,2) {$\z{c}$};
		\filldraw[black] (-2,2) circle (1pt);
		\node[right] at (2,2) {$\z{c'}$};
		\filldraw[black] (2,2) circle (1pt);
		\node[below] at (0,1.33) {$\z{e}$};
		\filldraw[black] (0,1.33) circle (1pt);
		\end{tikzpicture}
	\end{center}
	
	\noindent with every single (labeled) point having $p$-dimension $1$, any three (labeled) colinear points (as found in the configuration) having $p$-dimension $2$ and any three (labeled) non-colinear points having $p$-dimension $3$.   
	
	Let $C^*=\acl(\z{a}\,\z{b}\,\z{c}\z{b'}\,\z{c'}\,\z{e})$. We will obtain a contradiction by estimating $d(C^*)$ in two different ways. On the one hand, as $\{\z{a} ,\z{b} ,\z{b'}\}$ is independent and $\acl(\z{a}\,\z{b}\,\z{b'})\subseteq C^*$, it follows that $d(C^*)\geq 3/c$ (recall that $d(C^*/\acl(\z{a}\,\z{b}\,\z{b'}))\geq 0$). On the other hand $\acl(C_1C_2C_3C_4)=C^*$ where $C_1=\acl(\z{c}\,\z{e}\,\z{b'})$, $C_2=\acl(\z{b}\,\z{e}\,\z{c'})$, $C_3=\acl(\z{a}\,\z{b}\,\z{c})$ and $C_4=\acl(\z{a}\,\z{b'}\,\z{c'})$. We estimate $d(C_1C_2C_3C_4)$ using \textit{flatness} (see $(9)$ of Theorem \ref{thm:ForkOvClsdSets}). 
	
	We begin by showing $d(C_1)=2/c$.  Note that as $\z{b}\dnf \z{c'}$, by $(8)$ of Theorem \ref{thm:ForkOvClsdSets} we obtain that $\z{b}\,\z{c'}$ is closed. As $\z{e}\forks\z{c'}\,\z{b}$ another application of $(8)$ of Theorem \ref{thm:ForkOvClsdSets} tells us that $\z{e}\cap{\z{b}\,\z{c'}}$ is non-empty or that $d(\z{e}/\emptyset)>d(\z{e}/\z{b}\,\z{c'})$. An application of Lemma \ref{lem:IntersecOfNuggets} easily shows that $\z{e}\cap{\z{b}\,\z{c'}}$ is empty and hence $d(\z{e}/\emptyset)>d(\z{e}/\z{b}\,\z{c'})$. But as $d(\z{e}/\emptyset)=d(\z{e})=1/c$ we obtain that $d(\z{e}/\z{b}\,\z{c'})=0$. But then $d(C_1)=d(\z{e}\,\z{b}\,\z{c'})=d(\z{b}\,\z{c'})=2/c$ as $\z{b}\,\z{c'}$ is closed.  Similar arguments show that $d(C_i)=2/c$ for $i=2,3,4$.  
	
	We now claim that $d(C_i\cap C_j)=1/c$ for each $1\leq i < j \leq 4$. Fix $1\leq i < j\leq 4$. Note that as $C_i\cap C_j$ contains a realization of $p$, and hence $d(C_i\cap{C_j})\geq 1/c$. Further $C_i\cup C_j$ contains three independent realizations of $p$ and hence $d(C_i C_j)\geq 3/c$. Using flatness on $I'=\{i,j\}$, we obtain that $d(C_iC_j)\leq d(C_i)+d(C_j)-d(C_i\cap C_j)$ and the claim follows. A similar argument shows that $d(C_i\cap C_j\cap C_k)=0$ for $1\leq i <j <k \leq 4$. Now as $C_1\cap C_2\cap C_3\cap C_4 \subseteq C_1\cap C_2\cap C_3$ and $d(C_1\cap C_2\cap C_3)=0$, it follows that $d(C_1\cap C_2\cap C_3\cap C_4)=0$. 
	
	Hence we obtain that $3/c\leq d(C_1 C_2 C_3 C_4)\leq 4(2/c)-6 (1/c)-4(0)+0 = 2/c$, a contradiction which shows that $X\cap Y \cap p^{\mathbb{M}}$ is empty. Thus $p$ is not modular.     
\end{proof}

\begin{remark}\label{rmk:non-localmodularity}
	We sketch an alternate proof of Theorem \ref{thm:NuggetsAreNonLocMod} that uses only the non-triviality of $p$: Well known results of Hrushovski in \cite{Hru2} state that any stable theory with a non-trivial locally modular regular type interprets a group. As these structures do not interpret groups (see \cite{FW} by Wagner) the result now follows. %by Proposition 7.6 and the (unnumbered) remark immediately following it in \cite{FW}, we obtain that no groups are interpreted by $Th(\f{M}_{\z{\alpha}})$ (using the properties of $\delta$, $\omega$-stability of $\Sfin$ and the characterization of forking, it is easily checked that the properties Wagner calls $ME$ and $PW$ holds in this setting). Thus \textit{any} locally modular regular type must be trivial. But any $1/c$-nugget is non-trivial the required result follows.
\end{remark}

The following result shows that a broad class of types cannot be regular types and justifies the choice to study types $p\in S(A)$ with $d(p/A)=0,1/c$ in our study of regular types.  

\begin{thm}\label{lem:2/cHasPrWrGEQ2}
	Let ${A}$ be finite and closed in $\mathbb{M}$. Let $p\in S({A})$ be a basic type such that $d(p/{A})\geq 2/c$. Then $p$ is not regular. 
\end{thm}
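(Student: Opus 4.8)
The plan is to show that a basic type $p$ with $d(p/A)\geq 2/c$ fails to be regular by exhibiting a forking extension $q$ of $p$ to which $p$ is \emph{not} orthogonal, witnessing that the realizations of $p$ cannot carry a well-defined pregeometry via forking. The natural strategy mirrors the configuration-building arguments used in Theorem \ref{thm:DomEquivalence} and Theorem \ref{thm:NuggetsAreNonLocMod}: use Theorem \ref{lem:InfMinPa} (or its corollary Lemma \ref{lem:FinStrsInRatCase}) to glue together several realizations of $p$ inside a single finite strong structure in such a way that the global $\delta$-drop is only $1/c$, forcing dependence among realizations that are individually independent over $A$.

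First I would reduce to the case $A=\emptyset$ (replace $\mathbb{M}$ by a monster over $A$, or relativize, since regularity is parallelism-invariant) and fix a realization $\z b\models p$ together with the finite closed set $\z b A\leq\mathbb{M}$, writing $\z b = AB$ with $\delta(B/A)=m/c$, $m\geq 2$. Next I would produce two further copies $B_1,B_2$ of the quantifier-free type of $p$ over $A$, disjoint from each other and from $B$ over $A$, form the free join $C = B\oplus B_1\oplus B_2$ over $A$ (so $\delta(C/A)=3m/c$ by (2) of Fact \ref{lem:MonoRelRankOvBase}), and apply Lemma \ref{lem:FinStrsInRatCase} to obtain $G\in\Kfin$ with $C\subseteq G$, $\delta(G/A) = 3m/c - 1/c$, and with each $AB$, $AB_i$, and each pairwise free join $AB_i\oplus AB_j$ remaining strong in $G$. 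Taking a strong embedding $g$ of $G$ into $\mathbb{M}$ fixing $A$, I get realizations $g(B),g(B_1),g(B_2)$ of $p$ which are pairwise independent over $A$ — this uses (6) of Theorem \ref{lem:CompleteTypesOverSets} to see each is a realization of $p$, and (8) of Theorem \ref{thm:ForkOvClsdSets} together with the preserved strong joins to see pairwise independence — but which are \emph{dependent} as a triple, since $\delta$ of their union inside $g(G)$ has dropped. Then, as in Theorem \ref{thm:DomEquivalence}, the forking extension $q = \tp(g(B)/\acl(A\,g(B_1)\,g(B_2)))$ of $p$ is non-orthogonal to $p$ itself: $g(B)$ forks with $g(B_1)g(B_2)$ over $A$ yet $g(B_1),g(B_2)\models p$, so a Morley sequence in $p$ cannot be used to compute dimension independently, which is incompatible with regularity. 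Concretely, one shows $p$ has a nonforking extension to some set realizing $p$ that is orthogonal to $p$ is false, i.e. $p$ is not "foreign to" its own forking, contradicting the standard characterization of regularity.

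The cleanest formulation of the contradiction is probably this: if $p$ were regular, then by the usual theory of regular types (in \cite{Pil2}) any forking extension of $p$ would be orthogonal to $p$; but the configuration above gives a single finite closed set $X=\acl(A\,g(B_1)\,g(B_2))$ over which $g(B)$ forks with the copies $g(B_1),g(B_2)$ of $p$ inside $X$, so the restriction $q=\tp(g(B)/X)$ is a forking extension of $p$ that is manifestly non-orthogonal to $p$ (indeed a Morley sequence of realizations of $p$ over $A$ already "sees" the forking). Hence $p$ is not regular. The main obstacle I anticipate is the bookkeeping needed to guarantee that, after the single $\delta$-drop produced by Lemma \ref{lem:FinStrsInRatCase}, the three copies of $p$ are genuinely still \emph{pairwise} independent (not just that \emph{some} two of them are): this requires the lemma to be applied so that every two-fold free join $AB_i\oplus AB_j$ stays strong in $G$, which is exactly the kind of control Lemma \ref{lem:FinStrsInRatCase} provides when one lists all the relevant strong subsets in advance, so the argument should go through. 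A secondary point to handle carefully is the case distinction on whether any realization of $q$ lands inside $X$ — but since $p$ is non-algebraic over $A$ and the copies are strongly embedded, the nugget-style intersection argument (Lemma \ref{lem:IntersecOfNuggets}-type reasoning, adapted since $p$ need not be nugget-like here) or simply a direct $d$-computation shows the forking is genuine and dimension-dropping, giving the non-orthogonality we need.
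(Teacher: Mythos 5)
There is a genuine gap here, and it is conceptual rather than technical. The configuration you build --- three realizations $g(B),g(B_1),g(B_2)$ of $p$ that are pairwise independent over $A$ but jointly dependent --- is exactly the statement that the forking pregeometry on the realizations of $p$ is \emph{non-trivial}, and non-triviality is perfectly compatible with regularity. Indeed, the paper constructs precisely this configuration in Theorem \ref{thm:NuggetsAreNonLocMod} for $1/c$-nugget types, which Theorem \ref{thm:SomeNuggetsAreRegular} shows \emph{are} regular; so your argument, if it worked, would also ``prove'' that those types are not regular. The specific failure is at the last step: you correctly note that $q=\tp(g(B)/X)$ with $X=\acl(A\,g(B_1)\,g(B_2))$ is a forking extension of $p$, but you never verify $q\north p$, and the remark that a Morley sequence of $p$ ``already sees the forking'' does not establish it --- for a regular type such a $q$ would simply be orthogonal to $p$. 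A related symptom is that your construction never uses the hypothesis $d(p/A)\geq 2/c$ in an essential way (the same three-copies-plus-one-drop construction goes through verbatim when $d(p/A)=1/c$), so it cannot possibly separate the regular from the non-regular case.

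The paper's proof instead shows that $p$ has \emph{pre-weight} at least $2$, which contradicts regularity since regular types have weight $1$. For this one needs an $A$-independent pair each member of which forks with $\z{b}$ over $A$ --- and your $B_1,B_2$ fail this by design, since each is independent from $B$ over $A$. The fix is to use two \emph{small} auxiliary extensions $C_1,C_2$ with $\delta(C_i/A)=1/c$ (Lemma \ref{lem:ExistenceStngExt}), attach each to $AB$ via an essential minimal pair $G_i$ with $\delta(G_i/ABC_i)=-1/c$ from Theorem \ref{lem:InfMinPa} (this forces $B\forks_{A}C_i$ because $d(B/AC_i)<d(B/A)$), and then free-join $G_1,G_2$ over $AB$. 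The hypothesis $d(p/A)\geq 2/c$ is used exactly once, and crucially: the total drop below $ABC_1C_2$ is $2/c$, and $d(p/A)\geq 2/c$ is what keeps $AC_1C_2$ strong in the resulting structure, i.e.\ what keeps $C_1\dnf_{A}C_2$. If you want to salvage your approach, you would have to replace the three copies of $p$ by such a pair of rank-$1/c$ witnesses and argue via weight rather than via a dependent triple.
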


\begin{proof}
	Recall that a regular type has weight 1. We establish the above result by showing that $p$ has pre-weight at least 2 and hence weight at least 2. Our strategy is similar to the one used in Theorem \ref{thm:DomEquivalence}: we consider $A$ as living inside of $\Kfin$. We then construct a finite structure ${G}$ over the finite structure ${A}$ that we then embed strongly into $\mathbb{M}$ over ${A}$ using saturation. Finally we argue that the strong embedding witnesses the fact that the pre-weight of $p$ is at least 2. 
	
	Consider $A$ as a finite structure that lives in $\Kfin$. By Lemma \ref{lem:ExistenceStngExt} we may construct ${D}\in\Kfin$ such that the $D=AC$, $A\cap{C}=\emptyset$ (as sets) and ${A\leq D}$ with $\delta(D/A)=\delta({C/A})=1/c$. Let $AB$ be such that ${B}$ realizes the quantifier free type of $p$ over $A$. Consider the finite structures $F_i$, $i=1,2$ where each $F_i$ is the free join of $AB$ and an isomorphic copy of $D$ over $A$ and $F_1\cap F_2=AB$. We label the isomorphic copies of $D$ as ${AC}_1,{AC}_2$ and thus $F_i=ABC_i$, the free join of $AB,AC_i$ over $A$. Apply Theorem \ref{lem:InfMinPa} to obtain $G_i$ for $i=1,2$ such that $(F_i,G_i)$ is an essential minimal pair and $\delta(G_i/F_i)=-1/c$. It is easily verified that $A,AB,AC_i\leq G_i$. Let ${G}$ be the free join of ${G}_1,{G}_2$ over ${AB}$. Note that ${G}\in K_L$ and that we may now regard the finite structures $A,AB,AC_1$ etc. as substructures of $G$.  
	
	We claim that ${G}\in\Kfin$, ${A},{AB},{AC}_1,{AC}_2, {AC}_1{C}_2\leq {G}$ but  ${F}_1{F}_2$, the free join of $ABC_1,ABC_2$ over $AB$ is \textit{not} strong in $G$. Using Remark \ref{rmk:ConstrVerifSimpl} and  the transitivity of $\leq$, we obtain that it suffices to show that $AB, AC_1C_2\leq G$ along with $F_1F_2\nleq G$ to obtain the claim. 
	
	First, as $AB\leq G_i$ and $G$ is the free join of $G_1, G_2$ over $AB$, we obtain $AB\leq G$ by an application of $(2)$ of Fact \ref{fact:rankAddOverBases}. We now show that ${AC}_1{C}_2\leq {G}$. Let ${AC}_1{C}_2\subseteq{G}'\subseteq{G}$ and let ${B'}={B\cap{G'}}$, ${G}_i'={G}_i-{AC}_i$. Now $\delta({G'}/{AC}_1{C}_2)=\delta(({G}_1'-{B'})({G}_2'-{B'})/{AC}_1{C}_2{B}')+\delta({B'}/{AC}_1{C}_2)$ using $(3)$ of Fact \ref{lem:BasicDel3}. Further, since $AB,AC_1C_2$ is freely joined over $A$ $\delta({B'}/{AC}_1{C}_2)=\delta({B'}/{A})$ follows from $(1)$ of Fact \ref{lem:MonoRelRankOvBase}. Arguing similarly we obtain that $\delta({G}_i'-{B'}/{AC}_1{C}_2{B}')=\delta({G}_i'/{AB'C}_i)$. Thus it follows that
	$\delta({G'}/{AC}_1{C}_2)=\delta({G}_1'/{AC}_1{B}') + \delta({G}_2'/{AC}_2{B}') +\delta({B'}/{A})$. Now as $A\leq AB$, it follows that $\delta(B'/A)\geq 0$. The claim now follows by considering the cases $B'\neq B$ and $B'=B$ using that fact that $(ABC_i,G_i)$ forms an essential minimal pair. Finally, and easy calculation  shows that $\delta({G}/{F}_1{F}_2)=-2/c$. 
	
	Arguing as we did in Theorem \ref{thm:DomEquivalence}, we easily obtain that a strong embedding of $G$ into $\mathbb{M}$ over $A$ witnesses that the pre-weight of $p$ is at least 2. We leave the details to the reader.
\end{proof}

\section{A pseudofinite $\omega$-stable theory with a non-locally modular regular type}\label{sec:OtherProp}

In this section we draw on some known results to prove that there are pseudofinite $\omega$-stable theories with non-locally modular regular types. This answers a question of Pillay's in \cite{Pil1} regarding whether pseudofinite stable theories always have locally modular regular types. We assume that the reader is familiar with basic facts about pseudofinite theories. %As in Section \ref{sec:RegularTypes}, we assume that the reader is familiar with results from stability theory and geometric stability theory. The definitions and results used here can be found in \cite{Pil1} and \cite{Pil2}.

\begin{thm}\label{thm:Example}
	There is a pseudofinite $\omega$-stable theory with a non-locally modular regular type.   
\end{thm}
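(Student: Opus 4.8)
The plan is to assemble the pieces already developed in the paper together with a pseudofiniteness criterion for Hrushovski-style generic structures, which is where the work of Brody and Laskowski enters. First I would recall the precise statement we can borrow: Brody and Laskowski (and the surrounding literature on ab-initio Hrushovski constructions) show that when the rank coefficients $\z\alpha(E)$ are rational, and one works with the full class $\Kfin$ under $\leq$, the theory $\Sfin$ of the Baldwin--Shi hypergraph $\f{M}_{\z\alpha}$ is pseudofinite: the extension axioms of Definition \ref{defn:AEAxioms} can each be satisfied in a suitable finite structure, and a probabilistic/counting argument (using that $\delta$ takes values in $\frac{1}{c}\mathbb{Z}$) shows that for every finite fragment of $\Sfin$ there is a finite $L$-structure satisfying it. So the first step is simply to cite this: $\Sfin$ is pseudofinite.

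Next I would note $\Sfin$ is $\omega$-stable by $(3)$ of Theorem \ref{thm:ForkOvClsdSets1}. This is already in hand and requires nothing further. The third and final ingredient is the existence of a non-locally modular regular type in $\Sfin$: by Lemma \ref{lem:ExistenceStngExt} applied over $\f{A}=\emptyset$ we obtain $\f{B}\in\Kfin$ with $\emptyset\leq\f{B}$ and $\delta(\f{B})=1/c$, and after passing to an intrinsically closed witness inside $\mathbb{M}$ we may take $\f{B}$ to be a $1/c$-nugget over $\emptyset$ — concretely, shrink $\f{B}$ to a minimal subset still having $\delta=1/c$ over $\emptyset$, which is exactly the nugget condition of Definition \ref{defn:nugget}. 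The corresponding basic type $p\in S(\emptyset)$ is nugget-like with $d(p/\emptyset)=1/c$, hence regular by Theorem \ref{thm:SomeNuggetsAreRegular} and not locally modular by Theorem \ref{thm:NuggetsAreNonLocMod}. Combining the three facts — pseudofinite, $\omega$-stable, carries a non-locally modular regular type — yields the theorem, and this answers Pillay's question from \cite{Pil1} in the negative.

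The only genuine obstacle is the pseudofiniteness claim, and even that is not really proved here but imported: the point is to make sure the hypotheses of the Brody--Laskowski result are met, i.e. that our $\z\alpha$ is rational-valued (assumed throughout this section of the paper) and that the generic we are using is the $(\Kfin,\leq)$-generic rather than one of the proper subclass generics $\f{M}_{k/c}$ of Section \ref{sec:OmegaStable} — the pseudofiniteness argument is cleanest for the full class, where every extension axiom $\forall\z{x}\exists\z{y}(\Delta_{\f{A}}(\z{x})\wedge\Delta_{\f{A},\f{B}}(\z{x},\z{y}))$ with $\f{A}\leq\f{B}$ is asymptotically almost surely true in a random finite member of $\Kfin$ of large size. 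I would therefore state the theorem for $\Sfin$ itself, cite Brody--Laskowski for pseudofiniteness, and present the regular-type construction explicitly since it is short and self-contained given Lemma \ref{lem:ExistenceStngExt} and Theorems \ref{thm:SomeNuggetsAreRegular} and \ref{thm:NuggetsAreNonLocMod}.
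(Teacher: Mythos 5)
Your overall decomposition --- pseudofinite $+$ $\omega$-stable $+$ a non-locally modular regular type --- matches the paper, and your construction of the regular type (a $1/c$-nugget over $\emptyset$ via Lemma \ref{lem:ExistenceStngExt}, regular by Theorem \ref{thm:SomeNuggetsAreRegular}, non-locally modular by Theorem \ref{thm:NuggetsAreNonLocMod}) is exactly what the paper uses. The gap is in the pseudofiniteness step, which you correctly flag as the only real obstacle but then justify by the wrong mechanism. You assert that for rational $\z\alpha$ each extension axiom is asymptotically almost surely true in a random finite member of $\Kfin$, i.e.\ that $\Sfin$ is itself an almost-sure theory. This is precisely what fails at rational exponents: the zero-one law for these random (hyper)graphs holds for irrational $\alpha$ and breaks down for rational $\alpha$, which is why the rational case needs a detour at all. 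The Brody--Laskowski result is not a direct pseudofiniteness criterion for the rational-$\z\alpha$ generic.

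The paper's actual argument (for $L=\{E\}$ a single relation symbol, a restriction you omit) is: take irrationals $\alpha_n\nearrow\z\alpha$; each $Th(\f{M}_{\alpha_n})$ is an almost-sure theory by Baldwin--Shelah and hence pseudofinite; by Theorem 4.2 of Brody--Laskowski, $\Sfin=Th(\Pi_{\mathcal{U}}\f{M}_{\alpha_n})$ for a non-principal ultrafilter $\mathcal{U}$; and an ultraproduct of structures with pseudofinite theories has a pseudofinite theory. So pseudofiniteness of the rational-exponent theory is inherited from the irrational-exponent approximants through an ultraproduct identification, not established by a counting argument in $\Kfin$ itself. Without replacing your a.a.s.\ claim by this two-step transfer (or some other correct citation), the proof does not go through.
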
 

\begin{proof}
	Consider the case where $L=\{E\}$ contains only one relation symbol (of arity at least $2$) and let $\z{\alpha}\in (0,1)$ be rational. We claim that $\Sfin$ has the required properties.
	
	Let $\{\alpha_n\}$ be an increasing sequence of irrationals in $(0,1)$ that converge to $\z\alpha$. By the results of \cite{BdShelah1}, it follows that $Th(\f{M}_{\alpha_n})$ can be obtained as a almost sure theory with respect to a certain probability measure. Thus, in particular, each theory $Th(\f{M}_{\alpha_n})$ is pseudofinite. Now by Theorem 4.2 of \cite{BdLas}, it follows that $\Sfin = Th(\Pi_{\mathcal{U}} \f{M}_{\alpha_n})$ where $\mathcal{U}$ is a non-principal ultrafilter on $\omega$. Since taking the ultraproduct of structures with pseudofinite theories results in a structure with a pseudofinite theory, it follows that $\Sfin$ is pseudofinite. Further as we have shown in Theorem \ref{thm:NuggetsAreNonLocMod} that $1/c$-nuggets are non-locally modular and the result follows.
\end{proof}

\bibliographystyle{plain}
\bibliography{CountMod}

\begin{thebibliography}{10}

\bibitem{BdShelah1}
J.T. Baldwin and S.~Shelah.
\newblock Randomness and semigenericity.
\newblock {\em Transactions of the AMS}, 349:1359--1376, 1997.

\bibitem{BdShelah}
J.T. Baldwin and S.~Shelah.
\newblock {DOP} and {FCP} in generic structures.
\newblock {\em The Journal of Symbolic Logic}, 63:427--438, 1998.

\bibitem{BdSh}
J.T. Baldwin and N.~Shi.
\newblock Stable generic structures.
\newblock {\em Annals of Pure and Applied Logic}, 79:1--35, 1996.

\bibitem{BdLas}
J.~Brody and M.~C. Laskowski.
\newblock On rational limits of {S}helah-{S}pencer graphs.
\newblock {\em Journal of Symbolic Logic}, 77:580--592, 2012.

\bibitem{DG1}
D.~K. {Gunatilleka}.
\newblock {The theories of {B}aldwin-{S}hi hypergraphs and their atomic
  models}.
\newblock {\em Preprint at \tt arXiv:1803.01831 [math.LO]}.

\bibitem{Hru2}
Ehud Hrushovski.
\newblock Locally modular regular types.
\newblock In John~T. Baldwin, editor, {\em Classification Theory}, pages
  132--164, Berlin, Heidelberg, 1987. Springer Berlin Heidelberg.

\bibitem{IkKiTs}
K.~Ikeda, H.~Kikyo, and A.~Tsuboi.
\newblock On generic structures with the strong amalgamation property.
\newblock {\em The Journal of Symbolic Logic}, 74:721--733, 2009.

\bibitem{Pil2}
A.~Pillay.
\newblock {\em Geometric Stability Theory}.
\newblock Springer-Verlag, New York, first edition, 1996.

\bibitem{Pil1}
Anand {Pillay}.
\newblock Strongly minimal pseudofinite structures.
\newblock {\em Preprint at \tt arXiv:1411.5008 [math.LO]}.

\bibitem{VY}
V.~Verbovskiy and I.~Yoneda.
\newblock {CM}-triviality and relational structures.
\newblock {\em Annals of Pure and Applied Logic}, 122:175--194, 2003.

\bibitem{FW}
F.O. Wagner.
\newblock Relational structures and dimensions.
\newblock In Kaye R. and Macpherson D., editors, {\em Automorphisms of
  first-order structures}, pages 175--194. Oxford Sci. Publ., Oxford University
  Press, New York, 1994.

\end{thebibliography}
\Addresses
\end{document}